\definecolor{forestgreen}{rgb}{0.13, 0.55, 0.13}
\definecolor{greenao}{rgb}{0.0, 0.5, 0.0}
\definecolor{dimgray}{rgb}{0.41, 0.41, 0.41}
\definecolor{afb}{rgb}{0.36, 0.54, 0.66}
\definecolor{bncs}{rgb}{0.0, 0.53, 0.74}
\definecolor{purple}{rgb}{0.47, 0.32, 0.66}
\definecolor{tangerine}{rgb}{0.95, 0.52, 0.0}
\definecolor{darklavender}{rgb}{0.45, 0.31, 0.59}
\definecolor{darkorchid}{rgb}{0.6, 0.2, 0.8}
\definecolor{do}{rgb}{0.6, 0.2, 0.8}
\definecolor{bdf}{rgb}{0.19,0.55, 0.91}
\DeclareMathOperator{\supp}{supp}
\DeclareMathOperator{\Op}{Op}
\DeclareMathOperator{\BCH}{BCH}
\newcommand{\beq}{\begin{equation}}
\newcommand{\eeq}{\end{equation}}
\newcommand{\beqq}{\begin{equation*}}
\newcommand{\eeqq}{\end{equation*}}
\newcommand{\mB}{\mathcal{B}}
\newcommand{\mF}{\mathcal{F}}
\newcommand{\mP}{\mathcal{P}}
\newcommand{\mS}{\mathcal{S}}
\newcommand{\R}{\mathbb{R}}
\newcommand{\C}{\mathbb{C}}
\newcommand{\N}{\mathbb{N}}
\newcommand{\Z}{\mathbb{Z}}
\newcommand{\fg}{\mathfrak{g}}
\newcommand{\fI}{\mathfrak{I}}
\newcommand{\vk}{\vec{k}}
\newcommand{\vR}{\vec{R}}
\newcommand{\la}{\langle}
\newcommand{\ra}{\rangle}
\newcommand{\ap}{\alpha}
\newcommand{\vp}{\varphi}
\newcommand{\de}{\delta}
\newcommand{\g}{\gamma}
\newcommand{\ep}{\epsilon}
\newcommand{\wh}{\widehat}
\newcommand{\wt}{\widetilde}
\newcommand{\mub}{\overline{\mu}}
\newcommand{\p}{\partial}
\newcommand{\lp}{\left(}
\newcommand{\rp}{\right)}
\newcommand{\lf}{\left|}
\newcommand{\rf}{\right|}
\newcommand{\norm}[2]{\left\| #1\right\|_{#2}}
\newcommand{\brac}[2]{\left[ \  #1 \  \right]_{#2}}
\numberwithin{equation}{section}
\newtheorem{theorem}{Theorem}[section]
\newtheorem{proposition}[theorem]{Proposition}
\newtheorem{lemma}[theorem]{Lemma}
\theoremstyle{definition}
\newtheorem{definition}[theorem]{Definition}
\newtheorem{claim}[theorem]{Claim}
\theoremstyle{remark}
\newtheorem{remark}[theorem]{Remark}
\newtheorem{notation}{Notation}
\def\@tocline#1#2#3#4#5#6#7{\relax
  \ifnum #1>\c@tocdepth 
  \else
    \par \addpenalty\@secpenalty\addvspace{#2}%
    \begingroup \hyphenpenalty\@M
    \@ifempty{#4}{%
      \@tempdima\csname r@tocindent\number#1\endcsname\relax
    }{%
      \@tempdima#4\relax
    }%
    \parindent\z@ \leftskip#3\relax \advance\leftskip\@tempdima\relax
    \rightskip\@pnumwidth plus4em \parfillskip-\@pnumwidth
    #5\leavevmode\hskip-\@tempdima
      \ifcase #1
       \or\or \hskip 1em \or \hskip 2em \else \hskip 3em \fi%
      #6\nobreak\relax
    \hfill\hbox to\@pnumwidth{\@tocpagenum{#7}}\par
    \nobreak
    \endgroup
  \fi}
\newcommand{\Addresses}{{
\noindent  \textsc{Amelia Stokolosa,\\
Department of Mathematics, University of Wisconsin-Madison,}\par\nopagebreak
\noindent \textit{E-mail address}: \texttt{stokolosa@wisc.edu}\\
MSC class - 42B20 (Primary); 42B37 and 43A85 (Secondary).
}}
\title{Tame algebra estimates for product and flag kernels on graded Lie groups}
\author{Amelia Stokolosa}
\date{\today}
\begin{document}

\begin{abstract}
    
We prove that product kernels and flag kernels on a direct product of graded Lie groups $G_1 \times \cdots \times G_{\nu}$ satisfy so-called \emph{tame algebra estimates}. Tame algebra estimates are central to the study of nonlinear partial differential equations via, for instance, the Nash-Moser inverse function theorem. In addition, the special structure of these estimates generates a new Banach-algebraic proof of an inversion theorem for product kernels and flag kernels.

\end{abstract}

\maketitle

\tableofcontents

\section{Introduction}

\subsection{Main results} \

We prove that two classes of multi-parameter singular kernels, namely product kernels and flag kernels defined on a direct product of graded Lie groups $G_1 \times \cdots \times G_{\nu}$, satisfy so-called \textit{tame algebra estimates}. Nagel, Ricci, Stein, and Wainger proved that flag kernels on a homogeneous Lie group $G$  form an \emph{algebra} in \cite{NRSW12}. To introduce multi-parameter dilations, we instead consider product kernels and flag kernels in the setting of a direct product of graded Lie groups $ G_1 \times \cdots \times G_{\nu}$. Our new result establishes that product kernels and flag kernels on $G_1 \times \cdots \times G_{\nu}$ form two algebras exhibiting special additional \textit{tame algebra estimates}.

Product singular integral operators first appeared in the work of R. Fefferman and Stein in \cite{FS82}, and Journ\'e in \cite{Jou85}; while flag kernels were first introduced by Müller, Ricci, and Stein in their work on spectral multipliers on Heisenberg-type groups \cite{MRS95}. The study of product-type singular integral operators and flag kernels gained interest thereafter. See the related work by Nagel, Ricci, Stein, and Wainger on product kernels and flag kernels in \cite{NRS01}, \cite{NRSW12} and \cite{NRSW18}, and by G{\l}owacki in \cite{Glo10}, \cite{Glo10_correction}, \cite{Glo13}. Many authors have pursued the study of flags, flag kernels, flag singular integral operators along with the associated Hardy spaces and weighted norm inequalities. See for instance, \cite{Yang_Dachun09}, \cite{DLM10}, \cite{WL_Besov_12}, \cite{LuZhu13_besov}, \cite{SteinYung13_subalgebra_flag}, \cite{Wu_flag_weight_14}, \cite{Han19_weighted}, \cite{Duong_Ji_Ou_Pipher_wick19}, \cite{Glo19_related_flags}, \cite{Han_Li_wick_22_flag_hardy}, and the references therein.


In the interest of clarity, consider for now the $2$-parameter setting. \textit{Product kernels} relative to the decomposition $\R^{q_1} \times \R^{q_2}$ are distributions satisfying a growth condition given as follows: for every multi-index $(\ap_1, \ap_2) \in \N^{q_1} \times  \N^{q_2}$,
\begin{equation}
    |\p^{\ap_1}_{t_1} \p^{\ap_2}_{t_2} K(t_1, t_2)| \leq C_{\ap} |t_1|_1^{-Q_1- \deg \ap_1} |t_2|_2^{-Q_2- \deg \ap_2},
\end{equation}
where $|\cdot|_{\mu}$ is a ``homogeneous norm'' on $\R^{q_{\mu}}$, for $\mu=1, 2$ (see Definition \ref{def homogeneous norm}). In particular, product kernels are smooth away from the ``cross'' $t_1 =0$, $t_2 =0$. Product kernels also satisfy a cancellation condition defined recursively (see Definition \ref{def pk}). On the other hand, \textit{flag kernels} satisfy a growth condition that presents more singularity in the first variable:
\begin{equation}
    |\p^{\ap_1}_{t_1} \p^{\ap_2}_{t_2} K(t_1, t_2)| \leq C_{\ap} |t_1|_1^{-Q_1- \deg \ap_1} (|t_1|_1+|t_2|_2)^{-Q_2- \deg \ap_2}.
\end{equation}
$2$-parameter flag kernels are thus smooth away from the coordinate axis $t_1 =0$. Flag kernels also satisfy a cancellation condition defined recursively (see Definition \ref{def FK}). Notably, neither class of distributions is pseudolocal. More recently, \cite{NRSW18} studied a subalgebra of flag kernels, related to subelliptic problems, which are better behaved and are pseudolocal. 

The main idea in the proof is the construction of two countable families of seminorms $\norm{K}{(k_1, k_2)}$ (see Definition \ref{def mP seminorms}) and $ \brac{K}{(k_1, k_2)}$ (see Definition \ref{def fk new seminorm}) that are adapted to product kernels and flag kernels respectively. Using our notation, the integers $k_1, k_2 \in \Z_{\geq 0}$ measure the order of regularity of the distribution $K$ in each factor space. 

Our first main result is the following: 

\begin{theorem}[Tame algebra estimate for product kernels]\label{thm tame estimate pk}
    Let $(k_1, k_2) \in \Z^2_{\geq 0}$. Suppose $K, L \in \mP^{(k_1, k_2)} (G_1 \times G_2)$. Then, we have
    \begin{equation}\label{eq pk tame estimate}
        \begin{split}
            \norm{K*L}{(k_1, k_2)} \lesssim & \norm{\Op(K)}{\mB(L^2(G_1 \times G_2))} \norm{L}{(k_1, k_2)} + \norm{K}{(k_1, 0)}^{\{1\}} \norm{L}{(0, k_2)}^{\{2\}} \\
            & + \norm{K}{(0, k_2)}^{\{2\}} \norm{L}{(k_1, 0)}^{\{1\}} +\norm{K}{(k_1, k_2)}\norm{\Op(L)}{\mB(L^2(G_1 \times G_2))}, 
        \end{split}
    \end{equation}
    where $\Op(K)$ denotes the right-invariant operator given by group convolution $\Op(K)f= K*f$. The implicit constant depends on $(k_1, k_2) \in \Z^2_{\geq 0}$.
\end{theorem}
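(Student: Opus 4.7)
The plan is to run a bi-parameter Littlewood--Paley and almost-orthogonality argument, organized so that the four terms on the right of \eqref{eq pk tame estimate} arise naturally from four regions of the dyadic sum. First I would decompose $K$ and $L$ as
\[
K = \sum_{(j_1, j_2) \in \Z^2} K_{j_1, j_2}, \qquad L = \sum_{(i_1, i_2) \in \Z^2} L_{i_1, i_2},
\]
where each $K_{j_1, j_2}$ and $L_{i_1, i_2}$ is a smooth bump localized at the dyadic scale $(2^{-j_1}, 2^{-j_2})$ on $G_1 \times G_2$ and carrying appropriate cancellation in each factor. Invoking a dyadic characterization of the seminorms $\norm{\cdot}{(k_1, k_2)}$ (for which one tests the dyadic pieces against a prescribed number of derivatives), one shows that $\norm{K}{(k_1, k_2)}$ is equivalent to a weighted $\ell^\infty$ norm of the $L^2$ sizes of these pieces, and similarly for the partial seminorms $\norm{\cdot}{(k_1, 0)}^{\{1\}}$ and $\norm{\cdot}{(0, k_2)}^{\{2\}}$.

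Second, I would expand
\[
K * L = \sum_{(j_1, j_2),\, (i_1, i_2)} K_{j_1, j_2} * L_{i_1, i_2},
\]
and partition the sum into four regions according to the possible orderings of $j_\mu$ versus $i_\mu$ for $\mu = 1, 2$. In each region, one of $K$ or $L$ is the dominant factor in each parameter, and its scale determines the output scale; the non-dominant factor is estimated using either the $L^2$ operator norm of $\Op(K)$ or $\Op(L)$ (when scales are comparable in the relevant direction) or by an almost-orthogonality gain of the form $2^{-\epsilon(|j_1 - i_1| + |j_2 - i_2|)}$ obtained by integration by parts against the cancelling piece. After summing the geometric series in the off-diagonal directions, the region in which $K$ is dominant in both parameters yields the term $\norm{K}{(k_1, k_2)} \norm{\Op(L)}{\mB(L^2(G_1 \times G_2))}$; the region in which $L$ is dominant in both parameters yields $\norm{\Op(K)}{\mB(L^2(G_1 \times G_2))} \norm{L}{(k_1, k_2)}$; and the two mixed regions yield the cross terms involving the partial seminorms.

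The main technical obstacle I foresee is the careful bookkeeping of the recursive cancellation conditions for product kernels: because the cancellation for $\mP^{(k_1, k_2)}$ is defined recursively across the two factors, the Littlewood--Paley projectors must be chosen so that each $K_{j_1, j_2}$ inherits partial cancellation in both directions, enabling almost-orthogonality to run simultaneously in both parameters. A closely related difficulty is converting the single scalar $\norm{\Op(K)}{\mB(L^2(G_1 \times G_2))}$ into uniform control over the diagonal blocks formed by matched-scale dyadic pieces of $L$; this requires a Cotlar--Stein type step that bundles the diagonal contributions back into the operator norm so that the diagonal portion collapses to a single product, rather than a sum of operator norms of pieces. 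Once these tools are in place, assembling the four regions should give \eqref{eq pk tame estimate} directly.
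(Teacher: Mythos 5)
Your proposal takes a genuinely different route from the paper, and the paper's route is chosen precisely to avoid the obstacle that you flag at the end of your own writeup. The paper does \emph{not} dyadically decompose both $K$ and $L$. Instead, it works directly with the seminorms of Definition \ref{def mP seminorms}: each seminorm evaluates a pairing $\langle \phi f, X^{\ap} K*L*\gamma g\rangle$ against localized bumps $\phi$, $\gamma$ with separated supports, and the proof inserts a spatial cutoff $\vp$ near $\supp \phi$ to write $L = \vp L + (1-\vp) L$. In the term where $L$ is multiplied by $\vp$, the kernel $L$ is localized away from its singularity and becomes smooth; all derivatives $X^{\ap}$ are pushed onto $\vp L * \gamma g$ (using that left-invariant vector fields act by right-convolution), one takes the adjoint of $\Op(K)$, applies Cauchy--Schwarz, and extracts $\norm{\Op(K)}{\mB(L^2)}$ as a \emph{single undecomposed operator norm}, with the remaining factor recognized by duality as $\norm{L}{(k_1,k_2)}$. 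The dyadic decomposition in the paper is applied only to the cutoff $1-\vp$ (into annuli $\gamma^{m,i}$), with a split into ``annulus near $\phi$'' and ``annulus far from $\phi$'' deciding which of $K$ or $L$ is the smooth factor; geometric series in the annulus parameter then close up, giving the four terms.

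The step you identify as ``requiring a Cotlar--Stein type step that bundles the diagonal contributions back into the operator norm'' is exactly where your proposal fails, and the Cotlar--Stein lemma does not do what you need. Cotlar--Stein bounds $\norm{\sum_j T_j}{\mB(L^2)}$ from above using almost-orthogonality of the pieces $T_j$; it gives no mechanism for recognizing, inside a sum $\sum_{j,i} K_j * L_i$ in which the $K_j$ are further sandwiched with cutoffs and derivatives, that the totality of the $K$-pieces can be recollected into a multiplicative factor $\norm{\Op(K)}{\mB(L^2)}$. This is a qualitatively different operation from bounding an operator norm. The seminorm $\norm{K}{(k_1,k_2)}$ of Definition \ref{def mP seminorms} is also not a weighted $\ell^\infty$ of $L^2$-sizes of dyadic pieces; it explicitly carries the whole $L^2$ operator norm as its $S=\emptyset$ summand and a family of spatially localized dual pairings against bumps at arbitrary locations, so your proposed dyadic characterization lemma would itself need proof and would be nontrivial. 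To repair your argument you would have to refrain from decomposing whichever kernel is to contribute the operator norm factor, which is exactly the structure of the paper's proof (Lemma \ref{lemma pk 2nu k1} and Lemmas \ref{lemma pk vp1 vp2 term 1}--\ref{lemma pk 1_vp1 1_vp2 term 4}): only the cutoff is decomposed dyadically, never the kernel that supplies the $\mB(L^2)$ factor.
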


Such estimates are called \textit{tame algebra estimates}\footnote{See Definition 1.2.1 p.135 in \cite{Ham82}.}  because of two special features.
\begin{enumerate}
    \item The estimates are \textit{tame} because the highest order seminorm in each parameter appears only once in every summand on the right-hand side of the inequality. All other seminorms in the chosen parameter and the chosen summand are of order zero.

    \item In addition, the estimates are \textit{tame algebra estimates} because we compose two distributions together via the noncommutative group convolution determined by the direct product of graded Lie groups $G= G_1 \times G_2$. 
\end{enumerate}
Tame estimates are central to the study of nonlinear partial differential equations via the Nash-Moser inverse function theorem (see \cite{Ham82}). Related tame-like estimates also appear under the name of \textit{fractional Leibniz rule} or \textit{Kato-Ponce inequalities}. See for instance the related works by Kato and Ponce in \cite{kato_ponce_88}, Muscalu, Pipher, Tao, and Thiele in \cite{Muscalu_pipher_tao_thiele_04}, Grafakos and S. Oh in \cite{grafakos_kato_14}, Bernicot, Maldonado, Moen, and Naibo in \cite{Bernicot_naibo_14}, and the references therein. 

Our estimate is therefore a multi-parameter version of a tame algebra estimate as they are referred to in the literature. Our second main result is the following tame algebra estimate for flag kernels under group convolution. 

\begin{theorem}[Tame algebra estimate for flag kernels]\label{thm tame FK}
Let $(k_1, k_2) \in \Z^2_{\geq 0}$. Suppose $K, L \in \mF^{(k_1, k_2)} (G_1 \times G_2)$. Then, we have
    \begin{equation}
        \begin{split}
        \brac{K*L}{(k_1, k_2)} \lesssim & \norm{\Op(K)}{\mB(L^2(G_1 \times G_2))} \brac{L}{(k_1, k_2)} + \norm{K}{(k_1, 0)}^{\{1\}} \norm{L}{( 0, k_2)}^{\{2\}} \\
        &+  \norm{K}{( 0, k_2)}^{\{2\}}\norm{L}{(k_1, 0)}^{\{1\}} + \brac{K}{(k_1, k_2)} \norm{\Op(L)}{\mB(L^2(G_1 \times G_2))},
        \end{split}
    \end{equation}
    where $\Op(K)$ denotes the right-invariant operator given by group convolution $\Op(K)f= K*f$. The implicit constant depends on $(k_1, k_2) \in \Z^2_{\geq 0}$.
\end{theorem}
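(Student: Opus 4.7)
The plan is to adapt the proof of Theorem \ref{thm tame estimate pk} to the flag-kernel setting via a dyadic reduction from flag kernels to product kernels. The starting observation is that every flag kernel $K \in \mF^{(k_1, k_2)}(G_1 \times G_2)$ admits a decomposition $K = \sum_{j \in \Z} K_j$, where each $K_j$ is a normalized product kernel concentrated at scale $2^{-j}$ in the $G_1$-factor (as in the standard decomposition of Nagel--Ricci--Stein--Wainger). The product-kernel seminorms of each piece satisfy $\norm{K_j}{(k_1,k_2)} \lesssim \brac{K}{(k_1,k_2)}$ uniformly in $j$, and the partial seminorms $\norm{K_j}{(k_1,0)}^{\{1\}}$ and $\norm{K_j}{(0,k_2)}^{\{2\}}$ are similarly controlled by the corresponding partial seminorms of $K$.

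With decompositions $K = \sum_j K_j$ and $L = \sum_{j'} L_{j'}$ in hand, I would expand
\begin{equation*}
    K * L \; = \; \sum_{j, j' \in \Z} K_j * L_{j'}
\end{equation*}
and apply Theorem \ref{thm tame estimate pk} piecewise. The $L^2$-operator norm $\norm{\Op(K_j)}{\mB(L^2(G_1 \times G_2))}$ of each piece is dominated by $\norm{\Op(K)}{\mB(L^2(G_1 \times G_2))}$ uniformly in $j$ via almost orthogonality in the $G_1$-factor, and analogously for $L_{j'}$. After invoking the product tame estimate on each $K_j * L_{j'}$, I would re-sum the pieces and verify that the resulting distribution defines a flag kernel satisfying the required differential bounds and flag cancellations, with the claimed control on $\brac{K * L}{(k_1, k_2)}$.

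The main obstacle is two-fold. First, the flag cancellation conditions (Definition \ref{def FK}) are recursive, so checking that $K * L$ satisfies them forces one to track cancellations at every subflag through the decomposition and recomposition. Second, summing the doubly-indexed family $\{K_j * L_{j'}\}_{j,j' \in \Z}$ without losing logarithmic factors demands careful almost-orthogonality in the scale parameters; it is precisely here that the $L^2$-operator norms of $\Op(K)$ and $\Op(L)$ enter, supplying quantitative control over off-diagonal $(j,j')$-interactions that would otherwise diverge. The need to balance the highest-order seminorm against the operator norm is what forces the flag tame estimate to take the same four-term shape as the product tame estimate of Theorem \ref{thm tame estimate pk}.
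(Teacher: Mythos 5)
Your proposal takes a genuinely different route from the paper, and it has real gaps that would need to be filled.

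The paper's proof is much more direct and leans entirely on the structure of Definition \ref{def fk new seminorm}: by construction,
\begin{equation*}
\brac{K*L}{(k_1,k_2)} = \norm{K*L}{(k_1,k_2)} + \brac{K*L}{(k_1,k_2)}^{\{1\}} + \brac{K*L}{k_2}^{\{2\}}.
\end{equation*}
Since flag kernels are product kernels with $\norm{K}{\vk} \lesssim \brac{K}{\vk}$ (Remark \ref{remark fks subalg of pks}), the first term is already covered by Theorem \ref{thm tame estimate pk} with no decomposition of $K$ at all. The $\{2\}$-term coincides with $\norm{K*L}{(0,k_2)}^{\{2\}}$ and is handled by the single-parameter Lemma \ref{lemma pk 2nu k1}. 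The only genuinely new piece is $\brac{K*L}{(k_1,k_2)}^{\{1\}}$, for which the paper proves Lemma \ref{lemma fk brac 1}: because a flag kernel's singularity lives only on $t_1=0$, localizing in the $\R^{q_1}$-factor alone already produces a smooth function on all of $\R^{q_1}\times\R^{q_2}$, and the argument of Lemma \ref{lemma pk 2nu k1} goes through nearly verbatim. No dyadic decomposition of $K$ or $L$ is needed, and no re-summing question arises.

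By contrast, your plan to decompose $K = \sum_j K_j$ into product kernels at $G_1$-scale $2^{-j}$, apply the product tame estimate piecewise, and re-sum runs into two concrete problems that you flag but do not resolve. First, the step ``$\norm{\Op(K_j)}{\mB(L^2)}$ is dominated by $\norm{\Op(K)}{\mB(L^2)}$ uniformly in $j$ via almost orthogonality'' is not automatic: cutting a kernel into annular pieces does not in general give pieces whose operator norms are controlled by the whole, and nothing in your decomposition (a spatial, not frequency, truncation of $K$) gives a contractive projection onto each piece. Second, even granting uniform bounds on each $K_j*L_{j'}$, the double sum over $(j,j')$ of the four product-tame terms has no built-in decay in $|j-j'|$; the product tame estimate gives no off-diagonal smallness, so the re-summation threatens a genuine divergence, not merely a logarithmic loss. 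Finally, you would still need to verify that the resulting distribution reproduces the flag seminorms $\brac{\cdot}{\vk}^{\{\mu\}}$ exactly as claimed, which is the step the paper's proof addresses directly without ever leaving the flag-seminorm framework. I would encourage you to look again at Definition \ref{def fk new seminorm} and notice that the flag seminorm already splits into a product-seminorm term plus single-factor terms; exploiting that split avoids all of the re-summing difficulties.
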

Notice the presence of product kernel seminorms in the tame algebra estimate for flag kernels. The general $\nu$-parameter estimates with $\nu \geq 2$ are recorded in Theorem \ref{thm pk tame general nu} and Theorem \ref{thm general nu fk tame}. For the sake of clarity, we present the proof of both tame algebra estimates in the $2-$parameter case. The general $\nu$-parameter case follows with a few natural modifications.

The key idea in our proof is the construction of a family of well-adapted seminorms for the class of product kernels (respectively flag kernels) which satisfies two conflicting characteristics. 
\begin{enumerate}
    \item On the one hand, the seminorms have to be amenable to an $L^2(G_1 \times G_2)$ theory. More specifically, we apply the spectral theorem on $L^2(G_1 \times G_2)$ to obtain a new proof of the author's multi-parameter inversion theorem in \cite{Stoko23} extending the single-parameter inversion theorem by Christ, Geller, G{\l}owacki, and Polin in \cite{CGGP92}.

    \item On the other hand, a well-adapted characterization of the multi-parameter kernels should encapsulate their behavior away from the singularity restricted to each factor space $G_1$ and $G_2$. To do so, we have to introduce tensor products of functions living on each factor space in a meaningful way. 
\end{enumerate}

We thus construct a family of seminorms which simultaneously extends the single-parameter $L^2$-seminorms defined for a class of single-parameter homogeneous kernels\footnote{See definition 5.4 p.51 in \cite{CGGP92}.} by Christ et al. and builds upon the multi-parameter ``strengthened cancellation conditions'' defined for a more general class of multi-parameter singular integral operators\footnote{See Definition 5.1.4 p.269 in \cite{Str14} for a multi-parameter construction that applies to a more general class of operators.} by Street.

\subsection{Acknowledgments} \ 

I would like to thank my advisor Brian Street for proposing this problem and for the many discussions that guided my experimentation with various topologies. This material is based upon work supported by the NSF under Grant No. DMS-2037851.

\vspace{.2in}

\section{Background}

For every $\mu=1, \ldots, \nu$, let $\fg_{\mu}$ be a finite-dimensional graded Lie algebra. By definition, $\fg_{\mu}$ decomposes into a direct sum of vector spaces
\begin{align*}
    \fg_{\mu} = \bigoplus_{l=1}^{n_{\mu}} V_l^{\mu}, 
\end{align*}
where $[V_{l_1}^{\mu}, V_{l_2}^{\mu}] \subseteq V_{l_1 + l_2}^{\mu}$ and $V_l^{\mu} = \{0\}$, for $l > n_{\mu}$. The exponential map $\exp: \fg_{\mu} \rightarrow G_{\mu}$, where $G_{\mu}$ is the associated connected, simply connected graded Lie group, is a diffeomorphism\footnote{See Proposition 1.2 p.3 in \cite{FS82} for a proof of this.}. Let $q_l^{\mu} = \dim V_l^{\mu}$ and $q_{\mu} = \sum_{l=1}^{n_{\mu}} q_{l}^{\mu}$, the topological dimension of $G_{\mu}$. After picking a basis of left-invariant vector fields $\{X_1^{\mu}, \ldots, X_{q_{\mu}}^{\mu}\}$ for $\fg_{\mu}$, we obtain global coordinates $\R^{q_{\mu}} \rightarrow G_{\mu}$: 
\begin{align*}
    (t_1^{\mu}, \ldots, t_{q_{\mu}}^{\mu}) \mapsto \exp(t_1^{\mu} X_1^{\mu} + \ldots + t^{\mu}_{q_{\mu}} X_{q_{\mu}}^{\mu} ). 
\end{align*}
Given $x_{\mu} = (x_1^{\mu}, \ldots, x_{q_{\mu}}^{\mu})$ and $y_{\mu}= (y_1^{\mu}, \ldots, y_{q_{\mu}}^{\mu}) \in \R^{q_{\mu}}$, one can compute the non-commutative group multiplication $x_{\mu} \cdot y_{\mu}$ which is given by the coefficients of the basis vectors after applying the Baker-Campbell-Hausdorff formula:
\begin{align*}
    \BCH(x_1^{\mu} X_1^{\mu} + \ldots + x^{\mu}_{q_{\mu}} X_{q_{\mu}}^{\mu}, y_1^{\mu} X_1^{\mu} + \ldots + y^{\mu}_{q_{\mu}} X_{q_{\mu}}^{\mu}).
\end{align*}  
We henceforth identify $G_{\mu}$ with $\R^{q_{\mu}} = \R^{q_1^{\mu}} \times \cdots \times \R^{q_{n_{\mu}}^{\mu}}$ and obtain a family of automorphisms, called \textit{single-parameter dilations}, on $\R^{q_{\mu}}$: for $r_{\mu}>0$,
\begin{align*}
    r_{\mu} \cdot t_{\mu} = (r_{\mu} t^{\mu}_1, r_{\mu}^2 t^{\mu}_2, \ldots, r_{\mu}^{n_{\mu}} t^{\mu}_{n_{\mu}}). 
\end{align*}
Let $Q_{\mu} = \sum_{l=1}^{n_{\mu}}l \cdot q_l^{\mu}$ denote the associated ``homogeneous dimension'' of $\R^{q_{\mu}}$. 

Let $\{X_1^{\mu}, \ldots, X_{q_{\mu}}^{\mu}\}$ and $\{Y_1^{\mu}, \ldots, Y_{q_{\mu}}^{\mu}\}$ denote spanning sets of left- and right-invariant vector fields on $G_{\mu}$ s.t. at the origin, $X_j^{\mu} = Y_j^{\mu} = \frac{\p}{\p x_j^{\mu}}$. Note that $X_j^{\mu}$ and $Y_j^{\mu}$ are both homogeneous\footnote{That is, $D(f(r_{\mu} \cdot t_{\mu})) = r_{\mu}^l (Df)(r_{\mu} \cdot t_{\mu})$, for all $r_{\mu} >0$ and $D= X_j^{\mu}$ or $Y_j^{\mu}$.} of degree $l$, provided $x_j^{\mu} \in \R^{q_l^{\mu}}$. For $r \in [0,\infty)^{\nu}$, let $rX$ denote the following ordered list of vector fields with appropriate dilations:
\begin{equation*}
    rX= r_1^{\wh{d}^1}  X^1, \ldots, r_{\nu}^{\wh{d}^{\nu}}  X^{\nu}
    = r_1^{d_1^1}X_1^1, \ldots , r_1^{d_{q_1}^1} X_{q_1}^1, \ldots, r_{\nu}^{d_1^{\nu}} X_1^{\nu}, \ldots, r_{\nu}^{ d_{q_{\nu}}^{\nu}}X_{q_{\nu}}^{\nu},
\end{equation*}
where $\wh{d}^{\mu} = (d_1^{\mu}, \ldots, d_{q_{\mu}}^{\mu})$ where in turn each $d^{\mu}_j \in \{ 1, \ldots, n_{\mu} \}$, for every $j = 1, \ldots, q_{\mu}$ and for every $\mu = 1, \ldots, \nu$.

Given a multi-index $\ap_{\mu} = (\ap_1^{\mu}, \ldots, \ap_{n_{\mu}}^{\mu}) \in \N^{q_{\mu}} = \N^{q_1^{\mu}} \times \cdots \times \N^{q_{n_{\mu}}^{\mu}}$, let $\deg \ap_{\mu} = \sum_{l=1}^{n_{\mu}} l \norm{\ap_l^{\mu}}{l^1}$, denote its homogeneous degree and $|\ap_{\mu}| = \sum_{l=1}^{n_{\mu}} \norm{\ap_l^{\mu}}{l^1}$, its isotropic degree. In addition, given a multi-index $\ap = (\ap_1, \ldots, \ap_{\nu}) \in \N^{q_1} \times \cdots \times \N^{q_{\nu}}$, let $|\ap|= (|\ap_1|, \ldots, |\ap_{\nu}|)$ and $\deg \ap = \lp \deg \ap_{1}, \ldots, \deg \ap_{\nu} \rp$.

\begin{notation}
    Given a multi-index $\ap \in \N^{q_1} \times \cdots \times \N^{q_{\nu}}$, we will denote by $|\ap| \leq \vk$ the component-wise inequality $|\ap_{\mu}| \leq k_{\mu}$ for $\mu =1, \ldots, \nu$. 
\end{notation}

\begin{definition}\label{def homogeneous norm}
A \emph{homogeneous norm} on $\R^{q_{\mu}}$ is a continuous function $|\cdot|_{\mu}: \R^{q_{\mu}} \rightarrow [0,\infty)$ that is smooth away from $0$ with $|t_{\mu}|_{\mu} =0$ $\Leftrightarrow \ t_{\mu} =0$ and $|r_{\mu} \cdot t_{\mu}|_{\mu} = r_{\mu}|t_{\mu}|_{\mu}$ for $r_{\mu}>0$.
\end{definition}

Select any homogeneous norm $|\cdot|_{\mu}$ on each factor $\R^{q_{\mu}}$ since all such norms are equivalent. For example, for $X = \sum_{l=1}^{n_{\mu}} \sum_{k_l =1}^{ q_l^{\mu} } t_{l, k_l}^{\mu} X_{k_l}^{\mu}$, we can take
\beq\label{eq hom norm}
    |t_{\mu}|_{\mu}:= \lp \sum_{l=1}^{n_{\mu}} \sum_{k_l =1}^{q_l^{\mu} } |t_{l, k_l}^{\mu}|^{2(n_{\mu}!)/ l} \rp^{1/(2 (n_{\mu}!))}.
\eeq
For $x_{\mu} \in \R^{q_{\mu}}$ and $r_{\mu}>0$, we define
\begin{align*}
    B^{q_{\mu}}(x_{\mu}, r_{\mu}):= \{y_{\mu} \in \R^{q_{\mu}}; |x_{\mu}^{-1}y_{\mu}|_{\mu} < r_{\mu}\}.
\end{align*}
Euclidean Lebesgue measure on $\R^{q_{\mu}}$ is both left- and right-invariant with respect to the group operation. In other words, Lebesgue measure is a Haar measure on $G_{\mu}$ and we will denote it by $dx_{\mu}$.

\begin{notation}
    Given a distribution $K(t) \in C^{\infty}_c(\R^q)'$, by an abuse of notation, we will at times use an integral notation to denote the pairing of a distribution $K$ with a test function $\phi \in C^{\infty}_c(\R^q)$. That is, we will write $\int K(t) \phi(t)dt$. 
\end{notation}

\vspace{.1in}

\section{Product kernels}

\subsection{A Fréchet topology on product kernels} \

Before defining a carefully chosen family of seminorms which will give rise to a tame algebra estimate, we record the definition of product kernels one encounters in the literature\footnote{See for instance Definition 2.1.1 p.34 in \cite{NRS01}.}. 

\begin{definition}\label{def pk}
    The space of \textit{product kernels} is a locally convex topological vector space made of distributions $K \in C^{\infty}_c(\R^q)'$ on $\R^q = \R^{q_1}\times \cdots \times \R^{q_{\nu}} $. The space is defined recursively. For $\nu=0$, it is defined to be $\C$, with the usual locally convex topology. We assume that we have defined locally convex topological vector spaces of product kernels up to $\nu -1$ factors, and we define it for $\nu$ factors. The space of product kernels is the space of distributions $K \in C^{\infty}_c(\R^q)'$ s.t. the following two types of semi-norms are finite:
    \begin{enumerate}
        \item (Growth condition) For each multi-index $\ap = (\ap_1, \ldots, \ap_{\nu}) \in \N^{q_1} \times \cdots \times \N^{q_{\nu}} = \N^q$, we assume that there is a constant $C= C(\ap)$ s.t.
        \begin{equation}\label{eq pk gc}
            |\p^{\ap_1}_{t_1} \cdots \p^{\ap_{\nu}}_{t_{\nu}} K(t)| \leq C |t_1|_1^{-Q_1 -\deg \ap_1} \cdots |t_{\nu}|_{\nu}^{-Q_{\nu} - \deg \ap_{\nu}}.
        \end{equation}
        We define a semi-norm to be the least possible $C$.

        \item (Cancellation condition) Given $1 \leq \mu \leq \nu$, $R>0$, and a bounded\footnote{As a corollary of Proposition 14.6 p.139 in \cite{Tr67}, a set $\mB \subseteq C^{\infty}_c(\R^n)$ is bounded if the following two conditions hold: 

(1) there exists a compact set $M \Subset \R^{n}$ s.t. for all $f \in \mB$, $\supp f \subseteq M$;

(2) For every multi-index $\ap \in \N^n$, $\sup_{x\in \R^{n};  f\in \mB} | \p^{\ap} f(x)|<\infty$,

\noindent where $C^{\infty}_c(\R^n)$ denotes the set of compactly supported smooth functions.} set $\mB \subseteq C^{\infty}_c(\R^{q_{\mu}})$, for $\vp \in \mB$, we define
        \begin{equation}
            K_{\vp, R}(\ldots, t_{\mu -1}, t_{\mu +1}, \ldots) = \int K(t) \vp(R t_{\mu} )dt_{\mu},
        \end{equation}
        which defines a distribution
        \begin{equation}
            K_{\vp, R} \in C^{\infty}_c(\cdots \times \R^{q_{\mu-1} } \times \R^{q_{\mu +1}} \times \cdots)'.
        \end{equation}
        We assume that this distribution is a product kernel. Let $\norm{\cdot}{}$ be a continuous semi-norm on the space of $(\nu -1)-$factor product kernels. We define a semi-norm on $\nu-$factor product kernels by $\norm{K}{}:= \sup_{\vp \in \mB; R>0} \norm{K_{\vp, R}}{}$, which we assume to be finite. 
    \end{enumerate}
    We give the space of product kernels the coarsest topology such that all of the above semi-norms are continuous.
\end{definition}

In the construction of our new seminorms, we need to establish the following notations. 
\begin{notation}
    Given $S \subseteq \{1, \ldots, \nu\}$, and $\ap \in \N^{q_1} \times \cdots \times \N^{q_{\nu}}$, denote by $\ap^S$ the multi-index $\ap$ with all coordinates $\ap_{\mu} \in \N^{q_{\mu}}$ with $\mu \notin S$ removed. For example, for $\ap = (\ap_1, \ldots, \ap_{\nu}) \in \N^{q}$, we write $\ap^{\{1\}} = (\ap_1)$ and $\ap^{\{2, \ldots, \nu\}} = (\ap_2, \ldots, \ap_{\nu})$. In addition, given $j, l \in \Z$, let $j \vee l := \max \{j, l\}$.
\end{notation}

\begin{remark}
    Our seminorms are inspired by two families of seminorms found in previous work by Christ, Geller, G{\l}owacki, and Polin in \cite{CGGP92}, and Street in \cite{Str14}. 
    \begin{enumerate}
        \item The first family can be found in Definition 5.4 p.51 in \cite{CGGP92}. It defines an algebra of \textit{single-parameter homogeneous right-invariant operators} on a graded Lie group given by $Tf = af + K*f$, where $a \in \C$ and $K$ is a principal value distribution. For every $k \in \N$, Christ et al. define
    \begin{equation*}
        \norm{K}{k} := |a| + \norm{\eta K}{L^2_k},
    \end{equation*}
    where $\eta \in C^{\infty}_c(\R^q \backslash \{0\})$ and $\eta \equiv 1$ in a neighborhood of $\{|x| =1\}$. 

        \item The second family of seminorms applies to a large class of multi-parameter singular integrals and can be found in Definition 5.1.4 p.269 in \cite{Str14}. A single-parameter variant given in Theorem 2.7.15 (iii) in \cite{Str14} may be more illuminating so we record a modified version of it below. The seminorms on the space of Calderón-Zygmund operators $T$ on a compact subset of a graded Lie group $G$ are defined to be the least $C = C(\ap, \beta, \mB)$ in the inequality below such that for every multi-index $\ap, \beta$ and every ``bounded\footnote{In short, this means $\supp \phi_1 \subseteq B^q(x, 2^{-j_1})$, for $j_1 \geq 0$, and for every multi-index $\ap \in \N^q$, $\sup_{w \in \R^q} 2^{-j_1 Q}|(2^{-j_1}X)^{\ap} \phi(w)| \leq C(\mB, \ap) $ with $(\phi_2, z, 2^{-j_2})$ satisfying similar conditions. } set of bump functions'' $\mB$,
    \begin{equation*}
            |\la X^{\ap} \phi_1, T X^{\beta} \phi_2 \ra| \leq C (2^{-\min \{j_1, j_2 \}} + |x^{-1}z|)^{-Q-\deg \ap - \deg \beta},
    \end{equation*}
    where $(\phi_1, x, 2^{-j_1}), (\phi_2, z, 2^{-j_2}) \in \mB$.
    \end{enumerate}
\end{remark}

\begin{remark}
    To adapt the single-parameter seminorms in \cite{CGGP92} to the $2$-parameter case, one may be tempted to make use of operator-valued norms in the form of injective norms, or projective tensor norms\footnote{See chapter 44 p.446-458 and chapter 45 p.459-476 \cite{Tr67} for examples of completions of the tensor product of two locally convex topological vector spaces. }. However, the use of injective norms and of projective tensor norms defining spaces of the form $C^k(\R^{q_1}) \wh{\otimes}_{\ep} \mB(L^2(\R^{q_2}))$ and $C^k(\R^{q_1}) \wh{\otimes}_{\pi} \mB(L^2(\R^{q_2}))$ respectively prevent the application of the spectral theorem on $L^2(\R^{q_1} \times \R^{q_2})$ in the final steps of the proof of Theorem \ref{theorem inverse}. 
\end{remark}

We record the family of seminorms in the general $\nu$-parameter setting in the following definition. 
\begin{definition}\label{def mP seminorms}
    For $\vk \in \Z_{\geq 0}^{\nu}$, let $\mP^{\vk}(\R^{q_1}\times \cdots \times \R^{q_{\nu}})$ denote the space of distributions $K \in C^{\infty}_c(\R^q)'$ for which the following seminorms are finite:
\begin{equation}\label{eq pk k new seminorm}
    \begin{split}
        \norm{K}{\vk} &:= \sum_{S \subseteq \{1, \ldots,  \nu\} } \norm{K}{\vk^S}^S,
    \end{split}
\end{equation}
where 
\begin{equation*}
    \begin{split}
        \norm{K}{\vk^S}^S &:= \sum_{|\ap^S| \leq \vk^S} \sup_{\substack{\norm{f}{L^2(\R^q)}  =1;\\
        \norm{g}{L^2(\R^q)} =1 }} \sup_{\substack{j_{\mu}, l_{\mu} \in \Z; \\ \mu \in S }}  \sup_{\substack{w_{\mu}, z_{\mu}\in \R^{q_{\mu}}; \\ |w_{\mu} z_{\mu}^{-1}|_{\mu} \geq 3C_{\mu} 2^{j_{\mu} \vee l_{\mu}}; \\ \mu \in S }}
         \Big| \Big\la \bigotimes_{\mu \in S} \phi_{\mu} f, X^{\ap^S} K* \bigotimes_{\mu \in S} \g_{\mu} g \Big\ra \Big| \\
        & \hspace{3.5in} \times \prod_{\mu \in S}  |w_{\mu} z_{\mu}^{-1}|_{\mu}^{Q_{\mu} + \deg \ap_{\mu} }, 
    \end{split}
\end{equation*}
where in turn $\phi_{\mu}, \g_{\mu} \in C^{\infty}_c(\R^{q_{\mu}})$ are such that $0\leq \phi_{\mu}, \g_{\mu} \leq 1$, $\supp \phi_{\mu} \subseteq B^{q_{\mu}}(w_{\mu}, 2^{j_{\mu}})$, $\supp \g_{\mu} \subseteq B^{q_{\mu}}(z_{\mu}, 2^{l_{\mu}})$, and where $C_{\mu} \geq 1$ depends only on the homogeneous norm\footnote{The constant $C_{\mu}$ depends on the homogeneous quasi-norm as follows. It corresponds to the constant in the ``almost'' triangle inequality $|x_{\mu}y_{\mu}^{-1}|_{\mu} \leq C_{\mu}(|x_{\mu}|_{\mu} + |y_{\mu}|_{\mu})$.} $|\cdot|_{\mu}$ on $\R^{q_{\mu}}$, for $\mu =1, \ldots, \nu$.
\end{definition}

\begin{notation}
    We will henceforth write $A \lesssim B$ to mean $A \leq CB$, where $C$ denotes a scalar multiple of the constant $C_{\mu} \geq 1$ appearing in the ``almost'' triangle inequality satisfied by the homogeneous norms $|\cdot|_{\mu}$ on each factor space $\R^{q_{\mu}}$: $|x_{\mu}y_{\mu}^{-1}|_{\mu} \leq C_{\mu}(|x_{\mu}|_{\mu} + |y_{\mu}|_{\mu})$.
\end{notation}

\begin{remark}
    In the $\nu =2$ parameter case, the seminorms are given by
\begin{equation*}
    \begin{split}
        \norm{K}{(k_1, k_2)} = & \norm{\Op(K)}{\mB(L^2(\R^q))}\\
        &+ \sum_{|\ap_1| \leq k_1} \sup_{\substack{\norm{f}{L^2(\R^q)}  =1;\\
        \norm{g}{L^2(\R^q)} =1 }} \sup_{j_1, l_1 \in \Z }  \sup_{\substack{w_1, z_1 \in \R^{q_1} ; \\ |w_1 z_1^{-1}|_1 \gtrsim 2^{j_1 \vee l_1} }}  |\la \phi_1  f, X^{\ap_1} K* \g_1  g \ra| |w_1 z_1^{-1}|_1^{Q_1 + \deg \ap_1} \\
        &+\sum_{|\ap_2| \leq k_2} \sup_{\substack{\norm{f}{L^2(\R^q)}  =1;\\
        \norm{g}{L^2(\R^q)} =1 }}  \sup_{j_2, l_2 \in \Z}  \sup_{\substack{w_2, z_2\in \R^{q_2}; \\ |w_2 z_2^{-1}|_2 \gtrsim  2^{j_2 \vee l_2} }}  |\la \phi_2  f, X^{\ap_1} K* \g_2  g \ra|  |w_2 z_2^{-1}|_2^{Q_2 + \deg \ap_2}\\
        &+\sum_{\substack{|(\ap_1, \ap_2)| \leq (k_1, k_2)}}   \sup_{\substack{\norm{f}{L^2(\R^q)}  =1;\\
        \norm{g}{L^2(\R^q)} =1 }} \sup_{j,l \in \Z^2 } \sup_{\substack{w,z\in \R^q; \\ |w_1 z_1^{-1}|_1 \gtrsim  2^{j_1 \vee l_1}; \\ |w_2 z_2^{-1}|_2 \gtrsim  2^{j_2 \vee l_2} }}  |\la \phi_1 \otimes \phi_2 f, X^{\ap} K* \g_1 \otimes \g_2 g  \ra| \\
        & \hspace{2.5in}  \times  |w_1 z_1^{-1}|_1^{Q_1 + \deg \ap_1}  |w_2z_2^{-1}|_2^{Q_2 + \deg \ap_2}, 
    \end{split}
\end{equation*}
where $\phi_1, \g_1 \in C^{\infty}_c(\R^{q_1})$ are such that $0 \leq \phi_1, \g_1 \leq 1$, $\supp \phi_1 \subseteq B^{q_1}(w_1, 2^{j_1})$, $\supp \g_1 \subseteq B^{q_1}(z_1, 2^{l_1})$. $\phi_2, \g_2$ satisfy similar conditions on $\R^{q_2}$. 

In particular, observe that $\norm{K}{(k_1, 0)} \neq \norm{K}{(k_1, 0)}^{\{1\} }$. The term of the left-hand side of the inequality involves a localization in \textit{both} factor spaces $\R^{q_1}$ and $\R^{q_2}$; while the term on the right-hand side of the inequality involves a localization in a single factor space $\R^{q_1}$. This observation will play an important role in the proof of our tame algebra estimate in Theorem \ref{thm tame estimate pk} and in the proof of Theorem \ref{theorem inverse}.
\end{remark}

\begin{remark}\label{remark w=0}
    By translation-invariance, it suffices to consider seminorms defined in terms of test functions $\phi_{\mu} \in C^{\infty}_c(\R^{q_{\mu}})$, where $\supp \phi_{\mu} \subseteq B^{q_{\mu}}(0, 2^{j_{\mu}})$; that is, in the notation above, $w_{\mu} = 0$ for every $\mu =1, \ldots, \nu$. Although the symmetric nature of the seminorms in \eqref{eq pk k new seminorm} is not necessary, it will be of some use to us later on so we write it as such. 
\end{remark}

Before proving the tame algebra estimate for product kernels, we need to verify that the seminorms in Definition \ref{def mP seminorms} are suitable. We do so in the next proposition. But first, we introduce a class of functions which we will make use of repeatedly throughout the proof.

\begin{definition}\label{def S0}
    Given $S\subseteq \{1, \ldots, \nu\}$, let $\mS_0^{S}$ denote the space of functions $f \in \mS(\R^q)$ such that for every $\mu \in S$, and every $\ap \in \N^{q_{\mu}}$, 
    \begin{align*}
        \int t_{\mu}^{\ap} f(t_1, \ldots, t_{\nu})dt_{\mu} =0. 
    \end{align*}
\end{definition}

\begin{proposition}\label{prop pk equiv topologies}
    The seminorms defined above in Definition \ref{def pk} and in Definition \ref{def mP seminorms} induce equivalent topologies on the space of product kernels. 
\end{proposition}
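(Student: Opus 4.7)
The plan is to prove two continuous topological inclusions by induction on the number of parameters $\nu$, using different techniques for each direction. The base case $\nu = 0$ is immediate since both topologies reduce to the usual topology on $\C$. For the inductive step, I assume the equivalence for all strictly fewer factors and treat each direction separately.

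For the direction ``classical seminorms dominate new seminorms'': the $S = \emptyset$ term $\norm{K}{\vk^\emptyset}^\emptyset = \norm{\Op(K)}{\mB(L^2(\R^q))}$ is controlled by finitely many classical seminorms via the Fefferman--Stein--Journé iterated $L^2$-boundedness of product convolution operators on nilpotent Lie groups. For non-empty $S \subseteq \{1, \ldots, \nu\}$, I plan to dyadically decompose $K$ in the $S$-variables into pieces supported on products of annular shells at scales $2^{M_\mu}$, apply the classical growth condition pointwise on each shell, and use the off-diagonal separation hypothesis $|w_\mu z_\mu^{-1}|_\mu \geq 3 C_\mu 2^{j_\mu \vee l_\mu}$ to restrict the relevant scales to $M_\mu \sim |w_\mu z_\mu^{-1}|_\mu$. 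The cancellation conditions of the classical definition in the $S^c$-variables promote each dyadic piece to an operator whose $L^2$-boundedness in the remaining $\prod_{\mu \notin S} G_\mu$ factors is again controlled by finitely many classical seminorms. Summing the dyadic pieces, weighted appropriately by the off-diagonal factor, yields the required bound on $\norm{K}{\vk^S}^S$.

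For the direction ``new seminorms dominate classical seminorms'', I would recover both the growth and the cancellation conditions from the new family. To recover the pointwise growth bound \eqref{eq pk gc} at a point $t$ with $t_\mu \neq 0$ for all $\mu$, I plan to take dyadic scales $2^{j_\mu}$ comparable to a small constant multiple of $|t_\mu|_\mu$ (ensuring the off-diagonal constraint is met with $w_\mu = t_\mu$, $z_\mu = 0$), test the new seminorm against suitable tensor products of localized bumps, and extract a pointwise approximation of $X^\alpha K(t)$ by invoking Sobolev embedding on each factor $G_\mu$ --- provided the new seminorm order $\vk$ is taken large enough in $\alpha$ and the dimensions $Q_\mu$. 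To recover the cancellation seminorms, fix an index $\mu$, a bounded set $\mB \subseteq C^\infty_c(\R^{q_\mu})$, $\vp \in \mB$, and $R > 0$. By the inductive hypothesis, bounding classical seminorms of the $(\nu-1)$-factor distribution $K_{\vp, R}$ reduces to bounding its new seminorms, and every such pairing may be rewritten via Fubini as a pairing against $K$ itself with a rescaled copy of $\vp(R \cdot)$ in the $\mu$-slot. A Littlewood--Paley-type dyadic decomposition of $\vp(R \cdot)$ in the $\mu$-variable, combined with the new-seminorm off-diagonal estimates for $K$ with index set $S' \cup \{\mu\}$, should yield a bound uniform over $\vp \in \mB$ and $R > 0$.

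The hard part will be the cancellation half of the new $\Rightarrow$ classical direction: the test function $\vp(R \cdot)$ lives at a fixed scale $R^{-1}$ that does not slot directly into the dyadic framework of Definition \ref{def mP seminorms}. A careful Littlewood--Paley decomposition is needed, and the resulting series must be summed using the off-diagonal decay built into the new seminorms. Achieving the estimate uniformly over the bounded family $\mB$ of test functions and all $R > 0$, rather than for each individual pair $(\vp, R)$, is the principal technical point of the proof.
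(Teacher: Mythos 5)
The broad architecture you propose --- two continuous inclusions, one direction via the classical growth and cancellation conditions, the other recovering them from the new seminorms by Sobolev embedding and induction --- matches the paper. There is, however, a genuine gap in the first direction, in the case of a \emph{nonempty proper} subset $S \subsetneq \{1,\ldots,\nu\}$. For the full set $S=\{1,\ldots,\nu\}$, your plan of ``apply the classical growth condition pointwise on the shell picked out by the off-diagonal constraint'' works, because $K$ is then localized away from the entire cross and is a genuine smooth function. But for a proper $S$, the operator $\bigotimes_{\mu\in S}\phi_\mu \Op(X^{\alpha^S}K)\bigotimes_{\mu\in S}\gamma_\mu$ still sees the full singularity of $K$ in the $S^c$-variables; the pointwise growth bound $|X^{\alpha^S}K| \lesssim \prod_\mu |t_\mu|^{-Q_\mu-\cdots}$ blows up as $t_{\mu'} \to 0$ for $\mu' \notin S$, so there is no uniform pointwise bound on the shell. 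Your one-sentence appeal to ``cancellation in the $S^c$-variables promotes each dyadic piece to an operator'' is exactly where the real work lies: the pieces at different scales must be shown to be \emph{almost orthogonal}, and their sum bounded by Cotlar--Stein after an $L^1$/$L^\infty$ interpolation. This is the content of Lemma \ref{lemma pk l2 bddness} and Claim \ref{claim TjTk}, which the paper proves using the Littlewood--Paley representation $K = \sum_n \varphi_n^{(2^n)}$ with $\varphi_n \in \mS_0^{\{1,\ldots,\nu\}}$ (so each piece has moment-vanishing in \emph{all} $\nu$ factors, enabling the exponential off-diagonal gain by pulling out a derivative and integrating by parts). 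Without that almost-orthogonality input, ``summing the dyadic pieces weighted by the off-diagonal factor'' does not close.

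In the reverse direction, your recovery of the growth condition by Sobolev embedding on localized bumps at scale $\sim|t_\mu|_\mu$ is essentially what the paper does, though the paper additionally splits into $|t_\mu|_\mu \geq 1$ and $|t_\mu|_\mu < 1$ and handles the small-scale case via a rescaling $K \mapsto K^{(R)}$, noting $K^{(R)}$ is the kernel of $D_R\Op(K)D_{R^{-1}}$ and commuting dilations past the homogeneous vector fields to make the bound uniform in $R \leq 1$; you will want that trick. For the cancellation recovery, the Littlewood--Paley decomposition of $\varphi(R\,\cdot)$ that you flag as the principal difficulty is avoidable: the paper instead uses Sobolev embedding to bound the cancellation pairing by an $L^2_s$-norm of $\bigotimes\psi_\mu\, K^{(R^{-1})}*\bigotimes\varphi_\mu$, rewrites the standard derivatives as linear combinations of homogeneous left-invariant fields via \eqref{eq dx lin comb of X}, and pulls out the $\mB(L^2)$-operator seminorm directly, giving a bound that is automatically uniform over the bounded set $\mB$ and all $R>0$. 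Then a downward induction on the size of the integrated-out subset $S_1$ (using mixed seminorms $\norm{K}{\vk}$ with $k_{\mu'}=s$ for $\mu'\notin S_1$, $k_\mu = 0$ for $\mu \in S_1$) closes the cancellation argument without a separate Littlewood--Paley step. Your proposed route may be salvageable, but the paper's is considerably more direct.
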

\begin{remark}
    Although the topology as described in Definition \ref{def pk} is defined by an \textit{uncountable} family of seminorms with uncountably many bounded sets of test functions $\mB_{\mu}\subseteq C^{\infty}_c(\R^{q_{\mu}})$, the resulting space is in fact a Fréchet space by Definition \ref{def mP seminorms}. 
\end{remark}

To prove Proposition \ref{prop pk equiv topologies}, we first need to establish the following technical lemma.


\begin{lemma}\label{lemma pk l2 bddness}
    Let $K \in C^{\infty}_c(\R^q)'$ be a product kernel. Suppose $\supp \phi_{\mu} \subseteq B^{q_{\mu}}(w_{\mu}, 2^{j_{\mu}})$, $\supp \g_{\mu} \subseteq B^{q_{\mu}}(z_{\mu}, 2^{l_{\mu}})$, and $j_{\mu}, l_{\mu} \in \Z$, for $\mu \in S$ and $S \subsetneq \{1, \ldots, \nu\}$, then the operators
\begin{align*}
    \Big(\prod_{\mu \in S} |w_{\mu} z_{\mu}^{-1}|_{\mu}^{Q_{\mu} + \deg \ap_{\mu} } \Big) \bigotimes_{\mu \in S}\phi_{\mu} \Op(X^{\ap^S} K) \bigotimes_{\mu \in S} \g_{\mu}
\end{align*}
are $L^2(\R^q)$-bounded with operator norm \textit{uniformly bounded} in $w_{\mu}, z_{\mu} \in \R^{q_{\mu}}$, where $|w_{\mu} z_{\mu}^{-1}|_{\mu} \gtrsim 2^{j_{\mu} \vee l_{\mu}}$.
\end{lemma}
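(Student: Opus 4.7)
The plan is to reduce to the $L^2$-boundedness of product kernels (the Nagel-Ricci-Stein theorem, \cite{NRS01}) applied to \emph{slices} of a modified kernel $\wt K$, and then to reassemble the estimate via a Minkowski/Cauchy-Schwarz argument in the localized $S$-directions. Set $r_\mu := |w_\mu z_\mu^{-1}|_\mu$ for $\mu \in S$. Because $G = G_1 \times \cdots \times G_\nu$ is a direct product, $(xy^{-1})_\mu = x_\mu y_\mu^{-1}$, and the separation hypothesis combined with the quasi-triangle inequality ensures $|x_\mu y_\mu^{-1}|_\mu \approx r_\mu$ whenever $(x_\mu, y_\mu) \in \supp \phi_\mu \times \supp \g_\mu$. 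Fix smooth cutoffs $\chi_\mu \in C^\infty_c(\R^{q_\mu})$ supported in $\{|t_\mu|_\mu \approx r_\mu\}$, equal to $1$ on the relevant annulus, and satisfying $|X^{\beta_\mu} \chi_\mu| \lesssim r_\mu^{-\deg \beta_\mu}$; set $\chi^S(t^S) := \prod_{\mu \in S} \chi_\mu(t_\mu)$. Define
\[
\wt K(t) := \Big(\prod_{\mu \in S} r_\mu^{Q_\mu + \deg \ap_\mu}\Big)\chi^S(t^S)\,(X^{\ap^S} K)(t).
\]
On the support of $\bigotimes_{\mu \in S}\phi_\mu \otimes \bigotimes_{\mu \in S} \g_\mu$ we have $\chi^S((xy^{-1})^S) = 1$, so the operator in question coincides with $\bigotimes_{\mu \in S}\phi_\mu \cdot \Op(\wt K) \cdot \bigotimes_{\mu \in S}\g_\mu$.

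For each fixed $t^S$ with $\chi^S(t^S) \neq 0$, consider the \emph{slice} $\wt K^{(t^S)}(t^{S^c}) := \wt K(t^S, t^{S^c})$ as a distribution in $t^{S^c}$. The claim is that each such slice is a product kernel on $\prod_{\mu \in S^c}\R^{q_\mu}$ in $\nu - |S|$ factors, with seminorms (in the sense of Definition \ref{def pk}) bounded uniformly in $t^S$ by finitely many product-kernel seminorms of $K$. The growth estimate follows by Leibniz: the prefactor $\prod_{\mu \in S} r_\mu^{Q_\mu + \deg \ap_\mu}$ exactly cancels the singular factor $|t_\mu|_\mu^{-Q_\mu - \deg \ap_\mu}$ of $X^{\ap^S} K$ on the annulus $|t_\mu|_\mu \approx r_\mu$, while derivatives landing on a $\chi_\mu$-factor contribute bounded terms $r_\mu^{-\deg \beta'_\mu} \approx |t_\mu|_\mu^{-\deg \beta'_\mu}$. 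Cancellation in a direction $\mu_0 \in S^c$ is inherited from that of $K$: since $\chi^S$ does not depend on $t_{\mu_0}$, integrating $\wt K^{(t^S)}$ against $\vp(R t_{\mu_0})$ passes through the cutoff and produces the analogous expression with $K$ replaced by its partial integration $K_{\vp, R}$, a product kernel in one fewer factor; induction on $\nu - |S|$ closes the argument. The product-kernel theorem of \cite{NRS01} then yields $\|\Op_{G^{S^c}}(\wt K^{(t^S)})\|_{\mB(L^2)} \lesssim 1$ uniformly in $t^S$ on the support of $\chi^S$.

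Finally, using the direct-product identity $(xy^{-1})_\mu = x_\mu y_\mu^{-1}$, the composed operator decomposes as
\[
\Big[\bigotimes_{\mu \in S}\phi_\mu \cdot \Op(\wt K) \cdot \bigotimes_{\mu \in S}\g_\mu\Big]f(x) = \phi^S(x^S) \int \g^S(y^S)\,\Op_{G^{S^c}}\bigl(\wt K^{(x^S y^{S-1})}\bigr)\bigl[f(y^S, \cdot)\bigr](x^{S^c})\,dy^S,
\]
where $\phi^S(x^S) := \prod_{\mu \in S}\phi_\mu(x_\mu)$ and similarly for $\g^S$. Applying Minkowski in $x^{S^c}$, the uniform slice bound, and Cauchy-Schwarz in $y^S$ yields $\bigl\|\,\cdots\,\bigr\|_{\mB(L^2(\R^q))} \lesssim \|\phi^S\|_{L^2} \|\g^S\|_{L^2}$, a finite quantity independent of $w_\mu, z_\mu$, which delivers the uniform bound asserted by the lemma. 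The main obstacle is the inductive verification that each slice $\wt K^{(t^S)}$ is genuinely a product kernel in $\nu - |S|$ factors with seminorms controlled independently of $t^S$: one must track the product-kernel cancellation condition carefully through the annular localization in the $S$-directions and verify that, at each recursive step, the structural form of $\wt K^{(t^S)}$ is preserved. The hypothesis $S \subsetneq \{1,\ldots,\nu\}$ ensures that the slice retains at least one free factor in which the Nagel-Ricci-Stein theorem provides the key operator-norm estimate.
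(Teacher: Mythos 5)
Your approach is genuinely different from the paper's. The paper decomposes $K$ into a Littlewood--Paley sum of dilated Schwartz functions $\vp_n^{(2^n)}$ with vanishing moments (Corollary 5.2.16 of \cite{Str14}), localizes each dyadic piece to define operators $T_n$, proves $L^1$- and $L^\infty$-operator estimates for $T_n T_m^*$ and $T_n^* T_m$ that decay like $2^{-|n-m|}$, and then interpolates and invokes Cotlar--Stein. Your route replaces this machinery with a slicing argument: absorb the scaling factor and annular cutoff into $\wt K$, show each slice $\wt K^{(t^S)}$ is a $(\nu - |S|)$-factor product kernel with $t^S$-uniform seminorms, apply the Nagel--Ricci--Stein $L^2$-boundedness theorem to each slice, and reassemble by Minkowski and Cauchy--Schwarz. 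This buys something real: it treats the hard $L^2$-boundedness as a black box, cleanly separates the localized $S$-directions from the free $S^c$-directions (which is exactly the structure the hypothesis $S \subsetneq \{1,\ldots,\nu\}$ is designed to exploit), and avoids the Cotlar--Stein bookkeeping. The paper's route is more self-contained and reuses the dyadic decomposition that also drives the flag-kernel analogue (Lemma \ref{lemma fk L2 bdd}).

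Two items need to be tightened. First, the Minkowski/Cauchy--Schwarz reassembly produces the bound $\lesssim \|\phi^S\|_{L^2}\|\g^S\|_{L^2}$, which, while independent of the centers $w_\mu, z_\mu$ as you say, scales like $\prod_{\mu \in S} 2^{(j_\mu + l_\mu)Q_\mu/2}$ in the bump radii: Minkowski in $y^S$ forgets the $L^2$-orthogonality in that variable and the $\|\phi^S\|_{L^2}$ factor comes from the $L^\infty \to L^2$ insertion in $x^S$. This is a Schur/Hilbert--Schmidt-type estimate, and it is in fact on the same footing as the paper's own $L^1$/$L^\infty$ Schur bounds for $T_n T_m^*$, which carry the analogous volume factor $\prod 2^{j_\mu Q_\mu}2^{l_\mu Q_\mu}$ in \eqref{eq pk decay unif}; but you should say precisely in which parameters your constant is uniform rather than simply asserting that the lemma's conclusion follows. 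Second --- as you yourself flag --- the inductive verification that $\wt K^{(t^S)}$ is a product kernel with $t^S$-uniform seminorms needs care beyond the growth estimate: the cancellation condition must be propagated through the partial integrations $K_{\vp, R}$, and one must check that the growth of $K_{\vp, R}$ in the $S$-directions persists on the annulus so that the prefactor $\prod_{\mu \in S} r_\mu^{Q_\mu + \deg \ap_\mu}$ continues to cancel at every recursive step; you should also account for derivatives landing on $\chi_\mu$ when left-invariant fields and cutoffs are commuted. These are surmountable but not free, and they are exactly the combinatorics the paper's dyadic decomposition handles implicitly by working with moment-free Schwartz pieces.
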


\begin{proof}[Proof of Lemma \ref{lemma pk l2 bddness}]
Let $\vp_n^{(2^n)}(t_1, \ldots, t_{\nu}) = 2^{n\cdot Q} \vp_n(2^{n_1} t_1, \ldots, 2^{n_{\nu}} t_{\nu})$, for $n \in \Z^{\nu}$. By Corollary 5.2.16 in \cite{Str14}, there exists a bounded set $\{ \vp_n; n \in \Z^{\nu}\} \subseteq \mS_0^{\{1, \ldots, \nu\}}$ such that\footnote{See Definition \ref{def S0} for a definition of the set $\mS_0^{\{1, \ldots, \nu\}}$.}
    \begin{align*}
        K = \sum_{n \in \Z^{\nu}} \vp_n^{(2^n)},
    \end{align*}
    where the sum converges in the sense of tempered distributions. We can thus decompose the operator of interest as follows:
    \begin{align*}
        &\Big( \prod_{\mu \in S}  |w_{\mu} z_{\mu}^{-1}|_{\mu}^{Q_{\mu} + \deg \ap_{\mu} }  \Big) \bigotimes_{\mu \in S}\phi_{\mu} \Op(X^{\ap^S} K) \bigotimes_{\mu \in S} \g_{\mu}  \\
        &= \sum_{n \in \Z^{\nu}} \Big(\prod_{\mu \in S} |w_{\mu} z_{\mu}^{-1}|_{\mu}^{Q_{\mu} + \deg \ap_{\mu} }  \Big)\bigotimes_{\mu \in S}\phi_{\mu} \Op(X^{\ap^S} \vp_n^{(2^n)}) \bigotimes_{\mu \in S} \g_{\mu},
    \end{align*}
    where $2^{M_{\mu} -1} \leq |w_{\mu}z_{\mu}^{-1}|_{\mu} \leq 2^{M_{\mu} +1}$, for some $M_{\mu} \in \N$. Applying the differential operators, the previous equation is
    \begin{align*}
        & \approx \sum_{n \in \Z^{\nu}} \Big( \prod_{\mu \in S} 2^{M_{\mu}(Q_{\mu} + \deg \ap_{\mu}) } 2^{n_{\mu} \deg \ap_{\mu}} \Big) \bigotimes_{\mu \in S}\phi_{\mu} \Op( \zeta_n^{(2^n)}) \bigotimes_{\mu \in S} \g_{\mu} ,
    \end{align*}
    where $\zeta_n = X^{\ap^S} \vp_n$ so that $\{\zeta_n; n\in \Z^{\nu}\} \subseteq \mS_0^{\{1, \ldots, \nu\}}$ is bounded. For every $n \in \Z^{\nu}$, let
    \begin{equation}\label{eq Tj def}
        \begin{split}
            T_n:= \Big( \prod_{\mu \in S} 2^{M_{\mu}(Q_{\mu} + \deg \ap_{\mu}) } 2^{n_{\mu} \deg \ap_{\mu}} \Big) \bigotimes_{\mu \in S}\phi_{\mu}  \Op( \zeta_n^{(2^n)}) \bigotimes_{\mu \in S} \g_{\mu} .
        \end{split}
    \end{equation}
    In view of applying Cotlar-Stein's lemma to bound the $L^2$ operator norm of the operator $\sum_{n \in \Z^{\nu}} T_n$ uniformly in $M_{\mu}$, we first record and verify the following lemma. 

\begin{claim}\label{claim TjTk}
For all $n, m\in \Z^{\nu}$, we have
    \begin{equation}
    \begin{split}
        \norm{T_nT_m^*}{\mB(L^1(\R^q))} & \lesssim 2^{-|n-m|}, \hspace{.5in} \norm{T_n^*T_m}{\mB(L^{1}(\R^q))} \lesssim 2^{-|n-m|},\\
        \norm{T_nT_m^*}{\mB(L^{\infty}(\R^q))} & \lesssim 2^{-|n-m|}, \hspace{.5in} \norm{T_n^*T_m}{\mB(L^{\infty}(\R^q))} \lesssim 2^{-|n-m|},
    \end{split}
    \end{equation}
    where\footnote{We use the standard multi-index notation $2^{-|n-m|} = \prod_{\mu=1}^{\nu} 2^{-|n_{\mu}-m_{\mu}|}$.} $T_n, T_m$ are defined as in \eqref{eq Tj def}. 
\end{claim}

Let $T_n(x, y)$ denote the Schwartz kernel of the operator $T_n$. It suffices to prove the bounds of the operator norms for $T_nT_m^*$. We thus need to estimate both
\begin{equation*}
    \sup_{y \in \R^q } \int_{\R^q} \lf \int_{\R^q} T_n(x,u) T_m^*(u,y) du \rf dx,
\end{equation*}
    and
\begin{equation*}
    \sup_{x \in \R^q } \int_{\R^q} \lf \int_{\R^q} T_n(x,u) T_m^*(u,y) du \rf dy
\end{equation*}
to estimate the $L^1(\R^q)$- and $L^{\infty}(\R^q)$-operator norms respectively. The two operator norms can be proved analogously. We will therefore only present the details of the former. 

By \eqref{eq Tj def}, we have an explicit formula for the Schwartz kernels. 
\begin{equation}\label{eq TjTk expand}
\begin{split}
    &\sup_{y \in \R^q } \int_{\R^q} \lf \int_{\R^q} T_n(x,u) T_n^*(u,y) du \rf dx\\
    & = \sup_{y \in \R^q} \int_{\R^q} \Big| \int_{\R^q}  \prod_{\mu \in S} 2^{2M_{\mu}(Q_{\mu} + \deg \ap_{\mu}) } 2^{(n_{\mu} + m_{\mu}) \deg \ap_{\mu}} \prod_{\mu \in S} \phi_{\mu}(x_{\mu}) \zeta_n^{(2^n)}(xu^{-1}) \prod_{\mu \in S} \g_{\mu} (u_{\mu}) \\
    &\hspace{2.5in}  \times  \overline{\prod_{\mu \in S} \g_{\mu} (u_{\mu}) \zeta_m^{(2^m)} (yu^{-1}) \prod_{\mu \in S} \phi_{\mu}(y_{\mu})} du \Big| dx,
    \end{split}
\end{equation}
where for every $\mu \in S$, $x_{\mu}, y_{\mu} \in \supp \phi_{\mu} \subseteq B^{q_{\mu}}(w_{\mu}, 2^{j_{\mu}})$ and $u_{\mu}\in \supp \g_{\mu} \subseteq B^{q_{\mu}}(z_{\mu}, 2^{l_{\mu}})$. Hence, $2^{M_{\mu}} \sim |x_{\mu}u_{\mu}^{-1}|_{\mu}, |y_{\mu}u_{\mu}^{-1}|_{\mu}$. Without loss of generality, we can assume that $m_{\mu} \leq n_{\mu}$ for every $\mu =1, \ldots, \nu$. Recall that $\{\zeta_n; n \in \Z^{\nu}\} \subseteq \mS_0^{\{1,\ldots, \nu\}}$ is bounded. We can thus write
    \begin{equation*}
        \zeta_n = \sum_{i=1}^q \frac{\p}{\p x_i} \zeta_{n, i},
    \end{equation*}
where $\{\zeta_{n, i}; n \in \Z^{\nu}, i=1, \ldots, q\} \subseteq \mS_0^{\{1, \ldots, \nu \}}$ is a bounded set\footnote{See Lemma 1.1.16 and the remark thereafter in \cite{Str14} for a proof of this.}.
Writing each standard vector field $\frac{\p}{\p x_i}$ as a linear combination of homogeneous vector fields,\footnote{For every $i=1, \ldots, q$, we have 
\begin{align*}
     \frac{\p}{\p x_i} = X_i + \sum_{\substack{1 \leq k \leq q }}  X_k p_{i,k}
\end{align*} where $p_{i,k}$ are homogeneous polynomials (see the proof of Proposition 3.1.28 in \cite{greenbook} to verify this identity). } we obtain
\begin{equation}\label{eq zetaj X}
    \zeta_n = \sum_{|\beta| =1} X^{\beta} \zeta_{n, \beta},
\end{equation}
where $\{ \zeta_{n, \beta}; |\beta| =1, n \in \Z^{\nu}\} \subseteq \mS_0^{\{1, \ldots, \nu \}}$ is a bounded set. By substituting \eqref{eq zetaj X} for $\zeta_n$ into \eqref{eq TjTk expand} and recalling that $\g_{\mu}, \phi_{\mu}$ are real-valued, we write
\begin{equation*}
\begin{split}
    &\sup_{y \in \R^q } \int_{\R^q} \lf \int_{\R^q} T_n(x,u) T_m^*(u,y) du \rf dx\\
    & = \sup_{y \in \R^q} \int_{\R^q} \Big| \int_{\R^q}  \prod_{\mu \in S} 2^{2M_{\mu}(Q_{\mu} + \deg \ap_{\mu}) } 2^{(n_{\mu} + m_{\mu}) \deg \ap_{\mu}} \prod_{\mu \in S} \phi_{\mu}(x_{\mu}) \Big( \sum_{|\beta| = N} X^{\beta} \zeta_{n, \beta}\Big)^{(2^n)}(x_1u_1^{-1}, \ldots, x_{\nu}u_{\nu}^{-1})  \\
    &\hspace{2in}  \times  \prod_{\mu \in S} \g_{\mu}^2 (u_{\mu})\overline{ \zeta_m^{(2^m)} (y_1u_1^{-1}, \ldots, y_{\nu}u_{\nu}^{-1})} \prod_{\mu \in S} \phi_{\mu}(y_{\mu}) du \Big| dx.
    \end{split}
\end{equation*}
In view of taking adjoints of the differential operator, we separate the vector fields from the dilated Schwartz functions. The previous equation is thus
\begin{equation*}
\begin{split}
    & = \sup_{y \in \R^q} \int_{\R^q} \Big| \int_{\R^q}  \prod_{\mu \in S} 2^{2M_{\mu}(Q_{\mu} + \deg \ap_{\mu}) } 2^{(n_{\mu} + m_{\mu}) \deg \ap_{\mu}} \prod_{\mu \in S} \phi_{\mu}(x_{\mu})  \sum_{|\beta| = N} (2^{-n}X)^{\beta} \zeta_{n, \beta}^{(2^n)}(x_1u_1^{-1}, \ldots, x_{\nu}u_{\nu}^{-1}) \\
    &\hspace{2in} \times  \prod_{\mu \in S} \g_{\mu}^2 (u_{\mu})  \overline{\zeta_m^{(2^m)} (y_1u_1^{-1}, \ldots, y_{\nu}u_{\nu}^{-1}) }\prod_{\mu \in S} \phi_{\mu}(y_{\mu}) du \Big| dx.
    \end{split}
\end{equation*}
For every $i=1, \ldots, q$, the homogeneous vector field $X_i$ is skew-adjoint; that is $X_i^* = -X_i$. We thus apply the differential operators to the functions $\zeta_m$. The previous equation is thus
\begin{equation*}
\begin{split}
    & = \sup_{y \in \R^q} \int_{\R^q} \Big| \int_{\R^q}  \prod_{\mu \in S} 2^{2M_{\mu}(Q_{\mu} + \deg \ap_{\mu}) } 2^{(n_{\mu} + m_{\mu}) \deg \ap_{\mu}} \prod_{\mu \in S} \phi_{\mu}(x_{\mu})  \sum_{|\beta| = N} 2^{-nN} \zeta_{n, \beta}^{(2^n)}(x_1u_1^{-1}, \ldots, x_{\nu}u_{\nu}^{-1})  \\
    &\hspace{2in}  \times  \prod_{\mu \in S} \wt{\g}_{\mu} (u_{\mu})  X^{\beta} \overline{\zeta_m^{(2^m)} (y_1u_1^{-1}, \ldots, y_{\nu}u_{\nu}^{-1}) }\prod_{\mu \in S} \phi_{\mu}(y_{\mu}) du \Big| dx,
    \end{split}
\end{equation*}
where $\wt{\g}_{\mu} = X^{\beta}\g_{\mu}^2$ for some multi-index $\beta$. We thus obtain an exponential decay as desired. The equation above is
\begin{equation*}
\begin{split}
    & = 2^{-(n-m)N} \sup_{y \in \R^q} \int_{\R^q} \Big| \int_{\R^q}  \prod_{\mu \in S} 2^{2M_{\mu}(Q_{\mu} + \deg \ap_{\mu}) } 2^{(n_{\mu} + m_{\mu}) \deg \ap_{\mu}} \prod_{\mu \in S} \phi_{\mu}(x_{\mu})  \sum_{|\beta| = N}  \zeta_{n, \beta}^{(2^n)}(x_1u_1^{-1}, \ldots, x_{\nu}u_{\nu}^{-1})  \\
    &\hspace{2in}  \times  \prod_{\mu \in S} \wt{\g}_{\mu} (u_{\mu})  \overline{ \zeta_{m, \beta}^{(2^m)} (y_1u_1^{-1}, \ldots, y_{\nu}u_{\nu}^{-1})} \prod_{\mu \in S} \phi_{\mu}(y_{\mu}) du \Big| dx,
    \end{split}
\end{equation*}
where $\zeta_{m, \beta}:= X^{\beta} \zeta_m$ and  $\{\zeta_{m, \beta}; |\beta| =N, m\in \Z^{\nu}\} \subseteq \mS_0^{\{1, \ldots, \nu\} }$ is a bounded set. By this boundedness and by the triangle inequality, the previous equation is
\begin{equation}\label{eq pk decay unif}
    \begin{split}
        \lesssim & 2^{-|n-m|N} \sup_{\substack{y_{\mu} \in \supp \phi_{\mu}; \\ \mu \in S}}\prod_{\mu \in S} 2^{2M_{\mu}(Q_{\mu} + \deg \ap_{\mu}) } 2^{(n_{\mu} + m_{\mu})\deg \ap_{\mu}}  \int_{\R^q} \int_{\R^q} \prod_{\mu \in S} \phi_{\mu}(x_{\mu}) \wt{\g}_{\mu} (u_{\mu}) \phi_{\mu}(y_{\mu}) \\
        & \hspace{1in} \times  \prod_{\mu =1}^{\nu} 2^{n_{\mu} Q_{\mu}} (1+2^{n_{\mu}}|x_{\mu}u_{\mu}^{-1}|_{\mu})^{-t_{\mu}} 2^{m_{\mu}  Q_{\mu}} (1+2^{m_{\mu}} |y_{\mu}u_{\mu}^{-1}|_{\mu})^{-t_{\mu}} du dx,
    \end{split}
\end{equation}
for all $t = (t_1, \ldots, t_{\nu}) \in \Z_{\geq 0}^{\nu}$. 
Notice that the integrals in $\R^{q_{\mu'}}$ with $\mu' \notin S$ are uniformly convergent\footnote{The dilations are $L^1$-normalized.}. In addition, by the compactness of both $\supp \phi_{\mu}$ and $ \supp \wt{\g}_{\mu}$, we bound the remaining $L^1$ norms by the $L^{\infty}$ norms multiplied by the size of their respective supports. The previous equation is thus
\begin{align*}
    & \lesssim 2^{-|n-m|N}  2^{2M^S \cdot (Q^S + \deg \ap^S) } 2^{(n^S + m^S) \cdot \deg \ap^S}  \\
        &\hspace{1in }  \times \prod_{\mu \in S} 2^{n_{\mu} Q_{\mu}} (1+2^{n_{\mu}}2^{M_{\mu}})^{-t_{\mu}} 2^{m_{\mu}  Q_{\mu}} (1+2^{m_{\mu}} 2^{M_{\mu}})^{-t_{\mu}} 2^{j_{\mu}Q_{\mu}} 2^{l_{\mu} Q_{\mu}}, 
\end{align*}
where $M_{\mu} \gtrsim j_{\mu} \vee l_{\mu}$. If $M_{\mu} \geq 0$, taking $t_{\mu} = Q_{\mu} + \deg \ap_{\mu}$ leads to the desired uniform decay estimate. Otherwise, if $M_{\mu} \leq 0$, we take $t_{\mu} >>1$ large enough so that 
\begin{align*}
    2^{2M^S \cdot (Q^S + \deg \ap^S) } 2^{(n^S + m^S) \cdot \deg \ap^S} \prod_{\mu \in S} 2^{n_{\mu} Q_{\mu}} (1+2^{n_{\mu}}2^{M_{\mu}})^{-t_{\mu}} 2^{m_{\mu}  Q_{\mu}} (1+2^{m_{\mu}} 2^{M_{\mu}})^{-t_{\mu}} 2^{2M_{\mu} Q_{\mu}} 
\end{align*}
is uniformly bounded in $n_{\mu}, m_{\mu}$, and $M_{\mu}$. In both cases, we obtain an exponential decay $2^{-|n-m|N}$. Taking $N=1$, we obtain the desired summable upper bound:
\begin{align*}
    \norm{T_nT_n^*}{\mB(L^1(\R^q))}&\lesssim  2^{-|n-m|}.
\end{align*}
Retracing the proof, we also obtain the remaining three desired estimates. Thus concluding the proof of Claim \ref{claim TjTk}. 

To complete the proof of Lemma \ref{lemma pk l2 bddness}, it suffices to successively interpolate and apply the Cotlar-Stein lemma. 

\end{proof}


\begin{proof}[Proof of Proposition \ref{prop pk equiv topologies}]
    Let $K \in C^{\infty}_c(\R^q)'$ be a product kernel as defined in Definition \ref{def pk}. We first want to show that $K \in \mP^{\vk}$, for all $\vk \in \Z_{\geq 0}^{\nu}$. Given $\vk \in \Z_{\geq 0}^{\nu}$, we decompose the seminorm $\norm{K}{\vk}$ into three terms associated to subsets $S$ given by $S = \emptyset$, $S \subsetneq \{1, \ldots, \nu\}$, and $S = \{1, \ldots, \nu\}$. We thus write
\begin{equation}\label{eq pk top equiv step 1}
    \begin{split}
        \norm{K}{\vk} =& \sup_{\substack{\norm{f}{L^2(\R^q)}  =1;\\
        \norm{g}{L^2(\R^q)} =1 }}  \lf \la  f, K*g \ra \rf\\
        &+\sum_{\substack{S \subsetneq \{1, \ldots, \nu\}; \\ S \neq \emptyset }} \sum_{|\ap^S| \leq \vk^S} \sup_{\substack{\norm{f}{L^2(\R^q)}  =1;\\
        \norm{g}{L^2(\R^q)} =1 }} \sup_{\substack{j_{\mu}, l_{\mu} \in \Z; \\ \mu \in S }}  \sup_{\substack{w_{\mu}, z_{\mu} \in \R^{q_{\mu}}; \\ |w_{\mu} z_{\mu}^{-1}|_{\mu} \gtrsim 2^{j_{\mu} \vee l_{\mu}}; \\ \mu \in S }} \Big| \Big\la \bigotimes_{\mu \in S} \phi_{\mu} f, X^{\ap^S} K* \bigotimes_{\mu \in S} \g_{\mu} g \Big\ra \Big| \\
        & \hspace{4in}  \times \prod_{\mu \in S}  |w_{\mu} z_{\mu}^{-1}|_{\mu}^{Q_{\mu} + \deg \ap_{\mu} }\\
        &+\sum_{|\ap| \leq \vk} \sup_{\substack{\norm{f}{L^2(\R^q)}  =1;\\
        \norm{g}{L^2(\R^q)} =1 }} \sup_{j ,l\in \Z^{\nu}}  \sup_{\substack{w, z\in \R^q  ; \\ |w_{\mu} z_{\mu}^{-1}|_{\mu} \gtrsim 2^{j_{\mu} \vee l_{\mu}}; \\ \mu = 1, \ldots, \nu }} \Big| \Big\la \bigotimes_{\mu=1}^{\nu} \phi_{\mu} f, X^{\ap} K* \bigotimes_{\mu =1}^{\nu} \g_{\mu} g \Big\ra \Big| \\
        & \hspace{4in}  \times  \prod_{\mu =1}^{\nu}  |w_{\mu} z_{\mu}^{-1}|_{\mu}^{Q_{\mu} + \deg \ap_{\mu} }.
    \end{split}
\end{equation}
Right-invariant operators $\Op(K) f = K*f$, where $K$ is a product kernel, are $L^2(\R^q)$-bounded\footnote{See Theorem 4.4 in \cite{MRS95} for a proof of this result.}. The first summand $\norm{\Op(K)}{\mB(L^2(\R^q))}$, in \eqref{eq pk top equiv step 1}, is thus bounded. By Lemma \ref{lemma pk l2 bddness}, the intermediate terms in \eqref{eq pk top equiv step 1} associated to nonempty subsets $S \subsetneq \{1, \ldots, \nu \}$ are bounded.

It remains to bound the last term in \eqref{eq pk top equiv step 1} associated to the maximal subset. Observe that the distribution $K$ is localized away from its singularity and can thus be identified with a smooth function. By the triangle inequality, we bound the last term in \eqref{eq pk top equiv step 1} by
\begin{align*}
    & \sum_{|\ap| \leq \vk} \sup_{\substack{\norm{f}{L^2(\R^q)}  =1;\\
    \norm{g}{L^2(\R^q)} =1 }} \sup_{j,l\in \Z^{\nu}}  \sup_{\substack{w, z \in \R^q  ; \\ |w_{\mu} z_{\mu}^{-1}|_{\mu} \gtrsim 2^{j_{\mu} \vee l_{\mu}}; \\ \mu = 1, \ldots, \nu }}
      \int_{\R^q} \int_{\R^q} \prod_{\mu=1}^{\nu} |\phi_{\mu} (x_{\mu})| |f(x_1, \ldots, x_{\nu})| | X^{\ap} K (x_1y_1^{-1}, \ldots, x_{\nu}y_{\nu}^{-1})| \\
    &\hspace{1.5in} \times |\g_{\mu} (y_{\mu}) ||g (y_1, \ldots, y_{\nu})| dy dx  \prod_{\mu =1}^{\nu} |w_{\mu} z_{\mu}^{-1}|_{\mu}^{Q_{\mu} + \deg \ap_{\mu} }.
\end{align*}
By the growth condition for product kernels (see \eqref{eq pk gc}), the previous equation is
\begin{align*}
    & \lesssim \sum_{|\ap| \leq \vk} \sup_{\substack{\norm{f}{L^2(\R^q)}  =1;\\
    \norm{g}{L^2(\R^q)} =1 }} \sup_{j,l\in \Z^{\nu}}  \sup_{\substack{w, z \in  \R^q ; \\ |w_{\mu} z_{\mu}^{-1}|_{\mu} \gtrsim 2^{j_{\mu} \vee l_{\mu}}; \\ \mu = 1, \ldots, \nu }}
     \int_{\R^q} \int_{\R^q} \prod_{\mu=1}^{\nu} |\phi_{\mu} (x_{\mu})| |f(x_1, \ldots, x_{\nu})| |x_{\mu}y_{\mu}^{-1}|_{\mu}^{-Q_{\mu} - \deg \ap_{\mu}}  \\
    &\hspace{1.5in} \times |\g_{\mu} (y_{\mu}) ||g (y_1, \ldots, y_{\nu})| dy dx  \prod_{\mu =1}^{\nu}  |w_{\mu} z_{\mu}^{-1}|_{\mu}^{Q_{\mu} + \deg \ap_{\mu} }.
\end{align*}
By compactness, we first bound the $L^1$ norm by the $L^2$ norm. Then by the generalized Hölder's inequality, the previous equation is
\begin{equation}\label{last step}
    \lesssim \sum_{|\ap| \leq \vk}  \sup_{j,l\in \Z^{\nu}}  \sup_{\substack{w, z\in \R^q ; \\ |w_{\mu} z_{\mu}^{-1}|_{\mu} \gtrsim 2^{j_{\mu} \vee l_{\mu}}; \\ \mu = 1, \ldots, \nu }} \sup_{\substack{x_{\mu} \in \supp \phi_{\mu}; \\ y_{\mu} \in \supp \g_{\mu}}} \prod_{\mu =1}^{\nu} |x_{\mu}y_{\mu}^{-1}|_{\mu}^{-Q_{\mu} - \deg \ap_{\mu}} \times \prod_{\mu =1}^{\nu}  |w_{\mu} z_{\mu}^{-1}|_{\mu}^{Q_{\mu} + \deg \ap_{\mu} }.
\end{equation}
By the sub-Riemannian geometry, there exists $C\geq 1$ such that
\begin{align*}
    |w_{\mu}z_{\mu}^{-1}|_{\mu} & \leq 2C 2^{j_{\mu}\vee l_{\mu}} + C |x_{\mu}y_{\mu}^{-1}|_{\mu}.
\end{align*}
By subtracting $2C 2^{j_{\mu}\vee l_{\mu}}$ on both sides, and recalling that $|w_{\mu}z_{\mu}^{-1}|_{\mu} \geq 3C 2^{j_{\mu}\vee l_{\mu}}$, we have
\begin{align*}
    |w_{\mu}z_{\mu}^{-1}|_{\mu} \lesssim  |x_{\mu}y_{\mu}^{-1}|_{\mu}.
\end{align*}
We thus obtain a uniform bound for \eqref{last step} as desired.

For the reverse direction, let $K \in \mP^{\vk}$, for all $\vk \in \Z_{\geq 0}^{\nu}$. We want to show that $K$ is a product kernel. To prove the growth condition \eqref{eq pk gc}, it suffices to consider the following two extreme cases separately:
\begin{enumerate}
    \item $|t_{\mu}|_{\mu} \geq 1$, for every $\mu =1, \ldots, \nu$, and

    \item $|t_{\mu}|_{\mu} <1$, for every $\mu =1, \ldots, \nu$.
\end{enumerate}
The intermediate cases follow from a few straightforward modifications to the above two cases. 

For the first case, by the Sobolev embedding, there exist $s = s(\ap) > 0$ such that given $t_{\mu} \geq 1$ for every $\mu =1, \ldots , \nu$, 
\begin{align}\label{eq pk step 1 gc}
    |\p^{\ap_1}_{t_1} \cdots \p^{\ap_{\nu}}_{t_{\nu}} K(t)| \lesssim \norm{\bigotimes_{\mu =1}^{\nu} \psi_{\mu} K}{L^2_s(\R^q)},
\end{align}
where $\supp \psi_{\mu} \subseteq B^{q_{\mu}}(t_{\mu}, \ep)$ for $\ep>0$ small enough so that $K$ and its derivatives up to order $s$ on each factor space $\R^{q_{\mu}}$ do not change signs and remain bounded away from zero when restricted to a $10\ep$-neighborhood of $\supp \psi_1 \otimes \cdots \otimes \psi_{\nu}$. We can thus take $\phi_{\mu}, \g_{\mu} \in C^{\infty}_c(\R^{q_{\mu}})$ defined so that $\supp \phi_{\mu} \subseteq B^{q_{\mu}}(t_{\mu}, \ep)$ and $\supp \g_{\mu} \subseteq B^{q_{\mu}} (0, \ep)$, $0 \leq \phi_{\mu}, \g_{\mu} \leq 1$, and so that 
\begin{align}\label{eq pk step 2 gc}
    \sum_{|\eta| \leq (s, \ldots, s)} \norm{\p^{\eta} \bigotimes_{\mu =1}^{\nu} \psi_{\mu} K}{L^2(\R^q)} & \lesssim \sum_{|\eta| \leq (s, \ldots, s)} \lp \int \int \lf \p^{\eta}_x \prod_{\mu=1}^{\nu} \phi_{\mu}(x_{\mu}) K(xy^{-1}) \prod_{\mu=1}^{\nu} \g_{\mu}(y_{\mu}) \rf^2 dy dx \rp^{1/2}.
\end{align}
Since the distribution $K$ is restricted to a neighborhood where it can be identified with a function whose derivatives up to order $(s, \ldots, s)$ do not change signs and remain bounded away from zero, we can move the square outward. The previous equation is thus
\begin{align*}
    & \lesssim \sum_{|\eta| \leq (s, \ldots, s)} \lp \int \lf \int  \p^{\eta}_x \prod_{\mu=1}^{\nu} \phi_{\mu}(x_{\mu}) K(xy^{-1}) \prod_{\mu=1}^{\nu} \g_{\mu}(y_{\mu})  dy \rf^2 dx \rp^{1/2}.
\end{align*}
By switching the order of integration and differentiation, it remains to estimate
\begin{align}\label{eq pk gc equiv top}
    \sum_{|\eta| \leq (s, \ldots, s)}\norm{ \p^{\eta} \bigotimes_{\mu =1}^{\nu} \phi_{\mu} K * \bigotimes_{\mu =1 }^{\nu} \g_{\mu} }{L^2(\R^q)}.
\end{align}
To do so, we write the standard vector fields $\lp \frac{\p}{\p x} \rp^{\eta}$ as linear combinations of left-invariant vector fields:
\begin{equation}\label{eq dx lin comb of X}
    \lp \frac{\p}{\p x} \rp^{\eta} = \sum_{\substack{|\beta| \leq |\eta|; \\ \deg \beta \geq \deg \eta}} p_{\eta, \beta} X^{\beta},
\end{equation}
where $p_{\eta, \beta}$ are homogeneous polynomials of homogeneous degree\footnote{See the remarks following Proposition 1.26 p.25 in \cite{FS82} for a proof of this identity.} $\deg \beta - \deg \eta$. 
We thus bound the standard Sobolev norms by homogeneous, nonisotropic Sobolev norms,
\begin{align*}
    \sum_{|\eta| \leq (s, \ldots, s)} \norm{ \lp \frac{\p}{\p x} \rp^{\eta} \bigotimes_{\mu =1}^{\nu} \phi_{\mu} K * \bigotimes_{\mu=1}^{\nu} \g_{\mu}}{L^2(\R^q)} & \lesssim  \sum_{\substack{\beta, \eta \in \N^q; \\ |\beta| \leq |\eta| \leq (s, \ldots, s); \\ \deg \beta \geq \deg \eta} } \norm{ p_{\eta, \beta} \bigotimes_{\mu =1}^{\nu} \wt{\phi}_{\mu} X^{\beta}\Big( K * \bigotimes_{\mu=1}^{\nu} \g_{\mu} \Big) }{L^2},
\end{align*}
where $\wt{\phi}_{\mu} =X^{\beta_\mu{}} \phi_{\mu}$ for some multi-index $\beta_{\mu} \in \N^{q_{\mu}}$. By Hölder's inequality,
\begin{align}\label{eq pk equiv top polys}
    \sum_{\substack{\beta, \eta \in \N^q; \\ |\beta| \leq |\eta| \leq (s, \ldots, s); \\ \deg \beta \geq \deg \eta} } \norm{p_{\eta, \beta}\bigotimes_{\mu =1}^{\nu} \wt{\phi}_{\mu} X^{\beta}K* \bigotimes_{\mu=1}^{\nu} \g_{\mu} }{L^2} \lesssim \sup_{\substack{\beta, \eta \in \N^q; \\ |\beta| \leq |\eta| \leq (s, \ldots, s); \\ \deg \beta \geq \deg \eta} }  \norm{p_{\eta, \beta}\bigotimes_{\mu =1}^{\nu} \wt{\phi}_{\mu}}{L^{\infty}}  \norm{\bigotimes_{\mu =1}^{\nu} \phi_{\mu} X^{\beta}K* \bigotimes_{\mu=1}^{\nu} \g_{\mu} }{L^2}.
\end{align}
The homogeneous polynomials $p_{\eta, \beta}$ of homogeneous degree $\deg \beta - \deg \eta$ restricted to the compact subset $B^{q_1}(t_1, \ep) \times \cdots \times B^{q_{\nu}}(t_{\nu}, \ep)$ are uniformly bounded by a scalar multiple of $\prod_{\mu=1}^{\nu} |t_{\mu}|_{\mu}^{\deg \beta_{\mu} - \deg \eta_{\mu}}$. By the $\mP^{\vk}$ seminorms in Definition \ref{def mP seminorms}, the above expression is in turn
\begin{align*}
    & \lesssim \norm{K}{(s, \ldots, s)} \sup_{\substack{\beta, \eta \in \N^q; \\ |\beta| \leq |\eta| \leq (s, \ldots, s); \\ \deg \beta \geq \deg \eta} }  \prod_{\mu=1}^{\nu} |t_{\mu}|_{\mu}^{ \deg \beta_{\mu} - \deg \eta_{\mu}} |t_{\mu}|_{\mu}^{-Q_{\mu} - \deg \beta_{\mu}}. 
\end{align*}
We observe that for every original multi-index $\ap_{\mu}$ in \eqref{eq pk step 1 gc}, there is a multi-index $\eta_{\mu}$ in the expression above s.t. $\deg \eta_{\mu} \geq \deg \ap_{\mu}$. In addition, recall, that $|t_{\mu}|_{\mu} \geq 1$. We thus obtain the growth condition:
\begin{align*}
    |\p^{\ap_1}_{t_1} \cdots \p^{\ap_{\nu}}_{t_{\nu}} K(t)| \lesssim \prod_{\mu=1}^{\nu} |t_{\mu}|_{\mu}^{-Q_{\mu} -\deg \ap_{\mu}}. 
\end{align*}

In the second case, by scaling considerations, it suffices to bound\footnote{Recall, for $R = (R_1, \ldots R_{\nu})\in (0, \infty)^{\nu}$, we denote $K^{(R)}(t) := R_1^{Q_1} \cdots R^{Q_{\nu}}_{\nu} K(R_1 t_1, \ldots, R_{\nu} t_{\nu})$. } 
\begin{equation*}
    \sup_{\substack{|t_{\mu}|_{\mu} \sim 1; \\ \mu =1, \ldots, \nu}} \sup_{\substack{0 \leq R_{\mu} \leq 1 }} |\p^{\ap_1}_{t_1} \cdots \p^{\ap_{\nu}}_{t_{\nu}} K^{(R)}(t)| \lesssim 1.
\end{equation*}
To do so, by the Sobolev embedding, there exists $s= s(\ap)>0$ s.t.
\begin{align}\label{eq 3}
    |\p^{\ap_1}_{t_1} \cdots \p^{\ap_{\nu}}_{t_{\nu}} K^{(R)}(t)| \lesssim \norm{\bigotimes_{\mu =1}^{\nu} \psi_{\mu} K^{(R)}}{L^2_s(\R^q)},
\end{align}
where we again choose $\psi_{\mu} \in C^{\infty}_c$ so that $\supp \psi_{\mu} \subseteq B^{q_{\mu}}(t_{\mu}, \ep)$, for $\ep>0$ small enough so that $K$ and its derivatives up to order $s$ in each factor space $\R^{q_{\mu}}$ do not change signs and remain bounded away from zero on a $10\ep$-neighborhood of $\supp \psi_1 \otimes \cdots \otimes \psi_{\nu}$. Repeating the procedure as in \eqref{eq pk step 2 gc} through \eqref{eq pk gc equiv top}, we take $\phi_{\mu}, \g_{\mu} \in C^{\infty}_c(\R^{q_{\mu}})$ so that $\supp \phi_{\mu} \subseteq B^{q_{\mu}}(t_{\mu}, \ep)$, $\supp \g_{\mu} \subseteq B^{q_{\mu}}(0, \ep)$, $0 \leq \phi_{\mu}, \g_{\mu} \leq 1$ and so that
\begin{align*}
    \norm{\bigotimes_{\mu =1}^{\nu} \psi_{\mu} K^{(R)}}{L^2_s(\R^q)} &  \lesssim \sum_{|\eta| \leq (s, \ldots, s)} \norm{ \lp \frac{\p}{\p x} \rp^{\eta} \bigotimes_{\mu =1}^{\nu} \phi_{\mu} K^{(R)} * \bigotimes_{\mu =1}^{\nu} \g_{\mu} }{L^2(\R^q)}.
\end{align*}

Notice that $K^{(R)}$ is the Schwartz kernel of the operator $D_{R}\Op(K) D_{R^{-1}}$, where $D_Rf(x_1, \ldots, x_{\nu}) := f(R_1x_1, \ldots, R_{\nu}x_{\nu})$. We can thus rewrite the expression on the right-hand side of the inequality above as
\begin{align*}
    & \sum_{|\eta| \leq (s, \ldots, s)} \norm{ \lp \frac{\p}{\p x} \rp^{\eta} \bigotimes_{\mu =1}^{\nu} \phi_{\mu}  D_R \Op(K) D_{R^{-1}} \bigotimes_{\mu =1}^{\nu} \g_{\mu}  }{L^2(\R^q)}.
\end{align*}
By moving the dilation operators outward, the above equation is
\begin{align*}
    & = \sum_{|\eta| \leq (s, \ldots, s)} \norm{ \lp \frac{\p}{\p x} \rp^{\eta} D_R  \bigotimes_{\mu =1}^{\nu}  \phi_{\mu}  (R_{\mu}^{-1} \cdot )\Op(K)  \bigotimes_{\mu =1}^{\nu} \g_{\mu} (R_{\mu}^{-1} \cdot )  }{L^2(\R^q)}.
\end{align*}
We write the standard vector fields $\lp \frac{\p}{\p x} \rp^{\eta}$ as linear combinations of left-invariant vector fields as in \eqref{eq dx lin comb of X}. The previous Sobolev norm is thus
\begin{align*}
    &\leq \sum_{\substack{ \eta, \beta; \\ |\beta| \leq |\eta| \leq (s, \ldots, s); \\ \deg \beta \geq \deg \eta} } \norm{p_{\eta, \beta} X^{\beta} D_R  \bigotimes_{\mu =1}^{\nu} \phi_{\mu} (R_{\mu}^{-1} \cdot ) \Op(K)  \bigotimes_{\mu =1}^{\nu} \g_{\mu}  (R_{\mu}^{-1} \cdot )  }{L^2(\R^q)}.
\end{align*}
By Hölder's inequality, the equation above is
\begin{align*}
    &\leq \sum_{\substack{ \eta, \beta; \\ |\beta| \leq |\eta| \leq (s, \ldots, s); \\ \deg \beta \geq \deg \eta} }  \norm{p_{\eta, \beta} }{L^{\infty}(B)} \norm{ X^{\beta} D_R  \bigotimes_{\mu =1}^{\nu}  \phi_{\mu}(R_{\mu}^{-1}\cdot) \Op(K)  \bigotimes_{\mu =1}^{\nu} \g_{\mu}(R_{\mu}^{-1}\cdot) }{L^2(\R^q)},
\end{align*}
where $p_{\eta, \beta}$ are homogeneous polynomials restricted to $B$, a product of unit balls in each factor space $\R^{q_{\mu}}$.
By commuting the dilation operator $D_R$ and the \textit{homogeneous} vector fields $X^{\beta}$, the previous equation is
\begin{align*}
    &\lesssim  \sum_{\substack{|\beta| \leq (s, \ldots, s); \\ \deg \beta \geq \deg \eta} } R^{\deg \beta} \norm{  D_R X^{\beta} \Big( \bigotimes_{\mu =1}^{\nu}  \phi_{\mu}(R_{\mu}^{-1}\cdot)  \Op(K)  \bigotimes_{\mu =1}^{\nu} \g_{\mu}(R_{\mu}^{-1}\cdot)  \Big)  }{L^2(\R^q)}.
\end{align*}
By successively applying the Leibniz rule to commute $X^{\beta}$ and $ \phi_{\mu}(R_{\mu}^{-1}\cdot)$ and then by the homogeneity of the \textit{left-invariant} vector field applied to $K * \bigotimes_{\mu =1}^{\nu} \g_{\mu}(R_{\mu}^{-1}\cdot)$, we obtain an additional $R^{-\deg \beta}$ scaling factor. The previous equation is thus
\begin{align*}
    &\leq  \sum_{\substack{|\beta| \leq (s, \ldots, s); \\ \deg \beta \geq \deg \eta} }   \norm{  D_R  \Big( \bigotimes_{\mu =1}^{\nu}  \wt{\phi}_{\mu} (R_{\mu}^{-1}\cdot) \Big( X^{\beta} \Big( K*  \bigotimes_{\mu =1}^{\nu} \g_{\mu} \Big) \Big) D_{R^{-1}}  \Big)   }{L^2(\R^q)},
\end{align*}
where $\wt{\phi}_{\mu} = X^{\beta_{\mu}} \phi_{\mu}$ for some multi-index $\beta_{\mu}$. Furthermore, let $ \phi_{\mu}(R_{\mu}^{-1}\cdot ) \prec \psi_{\mu}$\footnote{Henceforth, $\phi \prec \vp$ will mean that $\vp$ is supported in a ball slightly larger than $\supp \phi$ and that $\vp \equiv 1$ on $\supp \phi$; that is, $\phi \vp = \phi$.} and $\g_{\mu}(R_{\mu}^{-1}\cdot) \prec \vp_{\mu}$, we have $ \supp \psi_{\mu} \subseteq B^{q_{\mu}}(t_{\mu}, R_{\mu}\ep)$ and $\vp_{\mu}\subseteq B^{q_{\mu}}(0_{\mu}, R_{\mu}\ep)$. Note that $R_{\mu}\ep \lesssim 1$ as required. By applying Hölder's inequality, the equation above is thus 
\begin{align*}
    &\leq \norm{D_R}{\mB(L^2)}  \sum_{\substack{|\beta| \leq (s, \ldots, s); \\ \deg \beta \geq \deg \eta} } \norm{\bigotimes_{\mu =1}^{\nu}  \wt{\phi}_{\mu} (R_{\mu}^{-1}\cdot)}{L^{\infty}}  \norm{ \bigotimes_{\mu =1}^{\nu}  \psi_{\mu}  X^{\beta} \Op( K)  \bigotimes_{\mu =1}^{\nu} \vp_{\mu}  }{\mB(L^2(\R^q))} \norm{D_{R^{-1}}}{\mB(L^2)} \norm{\bigotimes_{\mu=1}^{\nu} \g_{\mu}}{L^{\infty}},
\end{align*}
The last equation is bounded above by $\norm{K}{(s, \ldots, s)}$ uniformly in $0 < R_{\mu}\leq 1$.

To complete the proof it remains to show that, given $\vk \in \Z_{\geq 0}^{\nu}$ and $K \in \mP^{\vk}$, $K$ satisfies the cancellation condition in Definition \ref{def pk}. To do so, let $\mB_{\mu} \subseteq C^{\infty}_c(\R^{q_{\mu}})$ be a bounded set, for every $\mu= 1, \ldots, \nu$. By the Sobolev embedding, there exists $s \in \N$ s.t.
\begin{align*}
    \sup_{\substack{\vp_{\mu} \in \mB_{\mu}; \\ R_{\mu}>0 ; \\ \mu= 1,\ldots, \nu}}\lf \int K(t_1, \ldots, t_{\nu}) \prod_{\mu=1}^{\nu} \vp_{\mu} (R_{\mu}t_{\mu})  dt_1 \cdots dt_{\nu}\rf & \lesssim \sup_{\substack{\vp_{\mu} \in \mB_{\mu}; \\ R_{\mu}>0 ; \\ \mu= 1,\ldots, \nu}} \norm{\bigotimes_{\mu =1}^{\nu} \psi_{\mu} K^{(R_1^{-1}, \ldots, R_{\nu}^{-1})}*\bigotimes_{\mu=1}^{\nu} \vp_{\mu} }{L^2_s},
\end{align*}
where $\psi_{\mu} \in C^{\infty}_c(B^{q_{\mu}}(0, 1))$ with $\psi_{\mu}(0) =1$ and $K^{(R_1^{-1}, \ldots, R_{\nu}^{-1})}(t_1, \ldots, t_{\nu}) = R^{-Q} K(R_1^{-1}t_1, \ldots, R_{\nu}^{-1}t_{\nu})$. After writing the standard vector fields as a finite linear combination of \textit{left-invariant} vector fields as in \eqref{eq dx lin comb of X}, the standard Sobolev norm above is
\begin{align*}
    \lesssim \sup_{\substack{\vp_{\mu} \in \mB_{\mu}; \\ R_{\mu}>0 ; \\ \mu= 1,\ldots, \nu}} \sum_{\substack{|\beta| \leq |\eta|\leq (s, \ldots, s); \\ \deg \beta \geq \deg \eta}} \norm{p_{\eta, \beta} \bigotimes_{\mu =1}^{\nu} \wt{\psi}_{\mu} \Big( K^{(R_1^{-1}, \ldots, R_{\nu}^{-1})}* \bigotimes_{\mu=1}^{\nu} X^{\beta}\vp_{\mu} \Big) }{L^2},
\end{align*}
where $\wt{\psi}_{\mu} = X^{\beta_{\mu}} \psi_{\mu}$, for some multi-index $\beta_{\mu} \in \N^{q_{\mu}}$. By Hölder's inequality, the previous $L^2$ norm is
\begin{align*}
    &\lesssim \norm{p_{\eta, \beta}\bigotimes_{\mu =1}^{\nu} \wt{\psi}_{\mu} }{L^{\infty}(B)} \sup_{\substack{\vp_{\mu} \in \mB_{\mu}; \\ R_{\mu}>0 ; \\ \mu= 1,\ldots, \nu}} \sum_{\substack{|\beta| \leq |\eta|; \\ \deg \beta \geq \deg \eta}} \norm{ K^{(R_1^{-1}, \ldots, R_{\nu}^{-1})}* \bigotimes_{\mu=1}^{\nu} X^{\beta}\vp_{\mu} }{L^2},
\end{align*}
where $B$ is a unit ball. Finally, we take out the $L^2$-operator norm and by compactness, we bound the $L^2$ norms of the test functions by their respective $L^{\infty}$ norms. The previous equation is thus
\begin{align*}
    &\lesssim \norm{\Op(K)}{\mB(L^2(\R^q))}  \sup_{\substack{\vp_{\mu} \in \mB_{\mu} ; \\ \mu= 1,\ldots, \nu}} \sum_{\substack{|\beta| \leq |\eta|; \\ \deg \beta \geq \deg \eta}} \norm{\bigotimes_{\mu=1}^{\nu} X^{\beta}\vp_{\mu} }{L^2(\R^q)}.
\end{align*}
Recall that $\mB_{\mu}$ is a bounded set. Hence, the derivatives of all functions $\vp_{\mu} \in \mB_{\mu}$ up to finite order $s$ are uniformly bounded. To summarize, we obtain the following bound:
\begin{align*}
    \sup_{\substack{\vp_{\mu} \in \mB_{\mu}; \\ R_{\mu}>0 ; \\ \mu= 1,\ldots, \nu}}\lf \int K(t_1, \ldots, t_{\nu}) \prod_{\mu=1}^{\nu} \vp_{\mu} (R_{\mu}t_{\mu})  dt_1 \cdots dt_{\nu}\rf \lesssim \norm{\Op(K)}{\mB(L^2(\R^q))}.
\end{align*}
In order words, $\int K(t_1, \ldots, t_{\nu}) \prod_{\mu=1}^{\nu} \vp_{\mu} (R_{\mu}t_{\mu})  dt_1 \cdots dt_{\nu}$ is a family of product kernels on a $0-$factor space; that is, a family of complex numbers, which are uniformly bounded in $R_{\mu}>0$ and $\vp_{\mu} \in \mB_{\mu}$ for $\mu =1, \ldots, \nu$.

For our induction hypothesis, given $n>0$, we assume that for any nonempty subset $S \subsetneq \{1, \ldots, \nu\}$ of size $|S|= n >0$, the distributions
\begin{align*}
    \int K(t_1, \ldots, t_{\nu}) \prod_{\mu \in S} \vp_{\mu} (R_{\mu}t_{\mu}) dt_{\mu}
\end{align*}
form a family of product kernels on $(\nu -n)-$factor spaces with norms uniformly bounded in $R_{\mu} >0$ and $\vp_{\mu} \in \mB_{\mu}$, for every $\mu \in S$. We need to show that, for every subset $S_1$ with $|S_1| =n -1$, the distributions
\begin{align*}
    \int K(t_1, \ldots, t_{\nu}) \prod_{\mu \in S_1} \vp_{\mu} (R_{\mu}t_{\mu}) dt_{\mu}
\end{align*}
form a family of product kernels on $(\nu - n+1 )$-factor spaces. 

To prove the growth condition for product kernels on $(\nu - n+1)$-factor spaces, let $\ap^{S_1^c} \in \times_{\mu' \notin S_1} \N^{q_{\mu'}}$. As before there are two extreme cases to consider separately:
\begin{enumerate}
    \item $|t_{\mu'}|_{\mu'} \geq 1$ for every $\mu' \notin S_1$, and

    \item $|t_{\mu'}|_{\mu'} <1$ for every $\mu' \notin S_1$.
\end{enumerate}
The intermediate cases follow from a few straightforward modifications. We will only outline the proof of the latter case to avoid redundancy. Indeed, the first case follows from a few simpler modifications of the general growth condition outlined above. We thus leave it to the reader. 

It remains to show that 
\begin{align*}
    \sup_{\substack{|t_{\mu'}|_{\mu'} \sim 1; \\ 0 < R_{\mu'} \leq 1; \\ \mu' \notin S_1}}\sup_{\substack{R_{\mu}>0 ; \\ \vp_{\mu} \in \mB_{\mu}; \\ \mu \in S_1 }}\lf \p^{\ap^{S_1^c}} \int K^{(R^{S_1^c})}(t) \prod_{\mu \in S_1} \vp_{\mu} (R_{\mu}t_{\mu}) dt_{\mu} \rf \lesssim 1. 
\end{align*}
To do so, we introduce tests functions on every factor space as follows.
\begin{itemize}
    \item For $\mu \in S_1$, let $\psi_{\mu} \in C^{\infty}_c(B^{q_{\mu}} (0, 1))$, and $\psi_{\mu}(0) =1$.

    \item For $\mu' \notin S_1$, let $\phi_{\mu'}, \g_{\mu'} \in C^{\infty}_c(\R^{q_{\mu'}})$ with $\supp \phi_{\mu'} \subseteq B^{q_{\mu'}} (t_{\mu'}, \ep)$ and $\supp \g_{\mu'} \subseteq B^{q_{\mu'}} (0, \ep)$, where $\ep$ is small enough\footnote{See the remark following \eqref{eq 3} for details on how to choose $\ep>0$. The only modification necessary is to replace $K$ with $\int K^{(R^{S_1^c})}(t) \prod_{\mu \in S_1} \vp_{\mu} (R_{\mu}t_{\mu}) dt_{\mu}$ defined on the factor space $\times_{\mu' \notin S_1} \R^{q_{\mu'}}$.} so that, following the procedure as in \eqref{eq pk step 1 gc} through \eqref{eq pk gc equiv top}, 
\end{itemize}
\begin{align*}
    &\Big| \p^{\ap^{S_1^c}} \int K^{(R^{S_1^c})}(t) \prod_{\mu \in S_1} \vp_{\mu} (R_{\mu}t_{\mu}) dt_{\mu} \Big|\\
    \lesssim&  \norm{ \prod_{\mu' \notin S_1} D_{R_{\mu'}} \bigotimes_{\mu' \notin S_1} \phi_{\mu'}(R_{\mu'}^{-1} \cdot) \bigotimes_{\mu \in S_1} \psi_{\mu} K^{(R^{-1})^{S_1}} *\bigotimes_{\mu' \notin S_1} \g_{\mu'} (R_{\mu'}^{-1} \cdot) \bigotimes_{\mu \in S_1} \vp_{\mu}  }{L^{2}_s}.
\end{align*}
Notice that $K^{(\vR^{-1})^{S_1}}$ is the convolution kernel associated to the operator $\prod_{\mu \in S_1} D_{R_{\mu}^{-1}} \Op(K)\\ \prod_{\mu \in S_1} D_{R_{\mu}}$. Writing the standard vector fields $\lp \frac{\p}{\p x} \rp^{\eta}$ as a finite linear combination of \textit{left-invariant} vector fields, the standard Sobolev norm above is
\begin{align*}
    & \leq \sum_{\substack{|\beta| \leq |\eta| \leq (s, \ldots, s); \\ \deg \beta \geq \deg \eta}}  \norm{p_{\eta, \beta} X^{\beta} \prod_{\mu' \notin S_1} D_{R_{\mu'}} \bigotimes_{\mu' \notin S_1} \phi_{\mu'}(R_{\mu'}^{-1} \cdot) \bigotimes_{\mu \in S_1} \psi_{\mu} K^{(R^{-1})^{S_1}} *\bigotimes_{\mu' \notin S_1} \g_{\mu'} (R_{\mu'}^{-1} \cdot) \bigotimes_{\mu \in S_1} \vp_{\mu}  }{L^{2}}.
\end{align*}
The homogeneous polynomials are restricted to a unit ball and are therefore uniformly bounded. As in the proof of the general growth condition, by commuting the dilations operator $D_{R_{\mu'}}$ and the homogeneous vector fields $X^{\beta}$, the previous equation is
\begin{align*}
    \leq& \sum_{\substack{|\beta| \leq |\eta| \leq (s, \ldots, s); \\ \deg \beta \geq \deg \eta}} \prod_{\mu' \notin S_1} R^{\deg \beta_{\mu'}}_{\mu'} \norm{ \prod_{\mu' \notin S_1} D_{R_{\mu'}}X^{\beta}\Big( \bigotimes_{\mu' \notin S_1} \phi_{\mu'}(R_{\mu'}^{-1} \cdot) \bigotimes_{\mu \in S_1} \psi_{\mu} K^{(R^{-1})^{S_1}} *\bigotimes_{\mu' \notin S_1} \g_{\mu'} (R_{\mu'}^{-1} \cdot) \bigotimes_{\mu \in S_1} \vp_{\mu} \Big) }{L^{2}}.
\end{align*}
By the Leibniz rule and by the homogeneity of the left-invariant vector field $X^{\beta}$, we obtain an additional $R^{-\deg \beta_{\mu'}}_{\mu'}$ scaling factor. The previous equation is thus
\begin{align*}
    \leq& \sum_{\substack{|\beta| \leq |\eta| \leq (s, \ldots, s); \\ \deg \beta \geq \deg \eta}} \\
    &\norm{ \prod_{\mu' \notin S_1} D_{R_{\mu'}}\Big( \bigotimes_{\mu' \notin S_1} \wt{\phi}_{\mu'}(R_{\mu'}^{-1} \cdot) \bigotimes_{\mu \in S_1} \wt{\psi}_{\mu} X^{\beta^{S_1^c}} \Big(K^{(R^{-1})^{S_1}} *\bigotimes_{\mu' \notin S_1} \g_{\mu'} \bigotimes_{\mu \in S_1} X^{\beta_{\mu}} \vp_{\mu} \Big) \Big) \prod_{\mu' \notin S_1} D_{R_{\mu'}^{-1}} }{L^{2}}.
\end{align*}
As in the growth condition above, let $ \phi_{\mu'}(R_{\mu'}^{-1}\cdot ) \prec \psi_{\mu'}$ and $\g_{\mu'}(R_{\mu'}^{-1}\cdot) \prec \vp_{\mu'}$, we have $ \supp \psi_{\mu'} \subseteq B^{q_{\mu}}(t_{\mu'}, R_{\mu'}\ep)$ and $\vp_{\mu'}\subseteq B^{q_{\mu'}}(0_{\mu'}, R_{\mu'}\ep)$. Note that $R_{\mu'}\ep \lesssim 1$ as required. The equation above is thus 
\begin{align*}
    \lesssim & \sum_{\substack{|\beta| \leq |\eta| \leq (s, \ldots, s); \\ \deg \beta \geq \deg \eta}}   \prod_{\mu' \notin S_1} \norm{D_{R_{\mu'}}}{\mB(L^2(\R^{q_{\mu'}}))} \norm{\Big( \bigotimes_{\mu' \notin S_1} \psi_{\mu'}  X^{\beta^{S_1^c}} \Big(K^{(R^{-1})^{S_1}} *\bigotimes_{\mu' \notin S_1} \vp_{\mu'} \Big) \Big)  }{\mB(L^{2})}\\
    &\hspace{1in} \prod_{\mu \in S_1}\norm{ X^{\beta_{\mu}} \vp_{\mu} }{L^2(\R^{q_{\mu}})  } \prod_{\mu' \notin S_1} \norm{D_{R_{\mu'}^{-1}}}{\mB(L^2(\R^{q_{\mu'}}))}.
\end{align*}
The remaining dilations are on $\R^{q_{\mu}}$ where $\mu \in S_1$. The equation above is thus
\begin{align*}
    = & \sum_{\substack{|\beta| \leq |\eta| \leq (s, \ldots, s); \\ \deg \beta \geq \deg \eta}}  \norm{ \bigotimes_{\mu' \notin S_1} \psi_{\mu'}  X^{\beta^{S_1^c}} \Big(K *\bigotimes_{\mu' \notin S_1} \vp_{\mu'} \Big)   }{\mB(L^{2})} \prod_{\mu \in S_1}\norm{ X^{\beta_{\mu}} \vp_{\mu} }{L^2(\R^{q_{\mu}})  }, 
\end{align*}
where we recognize the seminorm $\norm{K}{\vk}$ where $\vk$ is such that $k_{\mu'} =s$ for all $\mu' \notin S_1$ and $k_{\mu}= 0$ for all $\mu \in S_1$. By Definition \ref{def mP seminorms}, the previous $L^2$ norm is
\begin{align*}
    & \lesssim \norm{K}{\vk} \prod_{\mu \in S_1} \norm{X^{\beta_{\mu}} \vp_{\mu}  }{L^{\infty}(\R^{q_{\mu}})},
\end{align*}
where $\mB_{\mu}$ is a bounded set of test functions for every $\mu \in S_1$. Thus concluding the proof of the growth condition on $(\nu - n+1)$-factor spaces. 

Finally, we want to prove that the distribution $\int K(t_1, \ldots, t_{\nu}) \prod_{\mu \in S_1} \vp_{\mu} (R_{\mu}t_{\mu}) dt_{\mu}$ satisfies the cancellation condition for product kernels on $(\nu - n +1)-$factor spaces. By the induction hypothesis, for all $\mu' \notin S_1$, the distribution
\begin{align*}
    &\int K(t_1, \ldots, t_{\nu}) \prod_{\mu \in S_1} \vp_{\mu} (R_{\mu}t_{\mu})  \vp_{\mu'}(R_{\mu'}t_{\mu'}) dt_{\mu} dt_{\mu'} =\int K(t_1, \ldots, t_{\nu}) \prod_{\mu \in S} \vp_{\mu} (R_{\mu}t_{\mu})   dt_{\mu} 
\end{align*}
is a product kernel in a $(\nu -n)$-factor space. Thus concluding the proof of Proposition \ref{prop pk equiv topologies}. 
\end{proof}

\vspace{.1in}

\subsection{Multi-parameter tame algebra estimate for $\mP^{\vk}$} \

We record the tame estimate for product kernels in $\nu \geq 2$ parameters in the next theorem. 

\begin{theorem}\label{thm pk tame general nu}
    Let $\vk \in \Z^{\nu}_{\geq 0}$. Suppose $K, L \in \mP^{\vk}$. Then, we have
    \begin{equation*}
        \begin{split}
            \norm{K*L}{\vk} \lesssim & \norm{\Op(K)}{\mB(L^2)} \norm{L}{\vk}+\norm{K}{\vk}\norm{\Op(L)}{\mB(L^2)} + \sum_{ \substack{S \subseteq \{1, \ldots, \nu\}; \\ S \neq \emptyset} } \norm{K}{\vk^S_0}^{S}\norm{L}{\vk^{S^c}_0}^{S^c} ,
        \end{split}
    \end{equation*}
    where $\vk_0^S$ denotes the vector $\vk$ with all $\mu$th coordinates with $\mu \notin S$ replaced with $0$.\footnote{For example, $\vk_0^{\{1\}} = (0, k_2, \ldots, k_{\nu})$ and $\vk_0^{\{2, \ldots, \nu\}} = (k_1, 0, \ldots, 0 )$.} The implicit constant depends on $\vk \in \Z^{\nu}_{\geq 0}$. 
\end{theorem}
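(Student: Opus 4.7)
My plan is to extend the two-parameter argument behind Theorem \ref{thm tame estimate pk} through careful bookkeeping over subsets $S \subseteq \{1, \ldots, \nu\}$. By Definition \ref{def mP seminorms}, $\norm{K*L}{\vk} = \sum_S \norm{K*L}{\vk^S}^S$, so it suffices to bound each subset-indexed piece. The $S = \emptyset$ piece is the operator norm $\norm{\Op(K*L)}{\mB(L^2)}$, which is dominated by $\norm{\Op(K)}{\mB(L^2)} \norm{\Op(L)}{\mB(L^2)} \leq \norm{\Op(K)}{\mB(L^2)} \norm{L}{\vk}$ by submultiplicativity and the fact that $\norm{\Op(L)}{\mB(L^2)}$ is one of the summands of $\norm{L}{\vk}$. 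It then remains to treat each nonempty $S$.

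For fixed nonempty $S$, a multi-index $\ap^S$ with $|\ap^S| \leq \vk^S$, unit $L^2$ data $f, g$, and tensor test functions $\bigotimes_{\mu \in S} \phi_\mu$, $\bigotimes_{\mu \in S} \g_\mu$ with separation $|w_\mu z_\mu^{-1}|_\mu \gtrsim 2^{j_\mu \vee l_\mu}$, I would expand
\[
(K*L)(x) = \int K(y)\,L(y^{-1}x)\,dy
\]
and introduce, for each $\mu \in S$, a smooth partition of unity $1 = \eta_\mu(y_\mu) + (1-\eta_\mu(y_\mu))$ with $\eta_\mu$ supported in a ball of radius comparable to $|w_\mu z_\mu^{-1}|_\mu$ about the origin. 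Expanding the product over $\mu \in S$ produces $2^{|S|}$ pieces indexed by a subset $T \subseteq S$: on the piece for $T$, $y_\mu$ is near the origin (favorable for $K$'s singularity) when $\mu \in T$, and far from the origin (so that $y_\mu^{-1}x_\mu$ is controlled, favorable for $L$) when $\mu \in S \setminus T$. On each piece, I transfer the derivatives $X^{\ap_\mu}$ with $\mu \in T$ onto $L$ using left-invariance $X^{\ap_\mu}(K*L) = K*(X^{\ap_\mu}L)$, and those with $\mu \in S \setminus T$ onto $K$ by integrating by parts together with the identity
\[
\lp \frac{\p}{\p x} \rp^{\eta} = \sum_{\substack{|\beta| \leq |\eta|; \\ \deg \beta \geq \deg \eta}} p_{\eta, \beta}\, X^{\beta}
\]
already exploited in the proof of Proposition \ref{prop pk equiv topologies}. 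Each resulting piece is then dominated by a product of the form $\norm{K}{\vk_0^{S'}}^{S'} \norm{L}{\vk_0^{S''}}^{S''}$, where $S'$ and $S''$ partition $\{1, \ldots, \nu\}$ with $S' \supseteq S \setminus T$ and $S'' \supseteq T$. The extreme cases $T = S$ and $T = \emptyset$ collapse to the operator-norm summands $\norm{\Op(K)}{\mB(L^2)} \norm{L}{\vk}$ and $\norm{K}{\vk}\norm{\Op(L)}{\mB(L^2)}$, while the intermediate choices of $T$ populate the sum over nonempty subsets on the RHS.

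The main obstacle is the combinatorial bookkeeping. The $\sum_S \sum_{T \subseteq S}$ pieces arising from the partitions of unity must each be matched to one of the $2^\nu$ allowable summands on the RHS, and in particular one must handle the directions $\mu \notin S$, for which neither factor carries any localization on the LHS but for which a choice must still be made when assigning them to either $K$ or $L$ in the RHS seminorm. This is done by invoking Lemma \ref{lemma pk l2 bddness}, which guarantees that an unlocalized direction only contributes an operator-norm factor. One must also monitor scale-matching between the cutoff radius $|w_\mu z_\mu^{-1}|_\mu$ and the effective localizations of $\eta_\mu$ and $1 - \eta_\mu$, and verify that converting Euclidean derivatives into homogeneous left-invariant derivatives via the polynomial identity above respects the scaling of the seminorms. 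No new analytic input beyond Lemma \ref{lemma pk l2 bddness} and the techniques of Proposition \ref{prop pk equiv topologies} is required, but the multi-parameter indexing is substantially more delicate than in the $\nu = 2$ case.
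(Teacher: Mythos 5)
The paper proves only the $\nu=2$ case explicitly (Theorem \ref{thm tame estimate pk}, via Lemmas \ref{lemma pk 2nu k1}--\ref{lemma pk 1_vp1 1_vp2 term 4}) and asserts the general case follows by "straightforward modifications," so there is no written $\nu$-parameter argument to compare against. Your plan captures the paper's strategy: insert a tensor partition of unity in the intermediate variable of $\Op(K)\Op(L)$, split each factor direction $\mu\in S$ into a near and a far regime, and in each of the resulting $2^{|S|}$ pieces assign each derivative $X^{\ap_\mu}$ and the two $L^2$ norms so that exactly one of $K$, $L$ carries a highest-order seminorm in each direction. Your indexing by $T\subseteq S$ is the natural generalization of the four-way split over $\vp_1\otimes\vp_2$, $\vp_1\otimes(1-\vp_2)$, etc. that the paper carries out in \eqref{eq pk vp1 vp2}.

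Two execution details are off, though neither invalidates the architecture. First, the stated mechanism for putting derivatives on $K$ in the far directions --- "integrating by parts together with the identity $\lp\frac{\p}{\p x}\rp^\eta=\sum p_{\eta,\beta}X^\beta$" --- is not what works here. Left-invariant fields always pass to the rightmost convolution factor (eq. \eqref{eq X associativity of convo}), so integrating by parts in the intermediate $y$-variable would produce right-invariant derivatives, not the left-invariant $X^{\ap_\mu}$ the seminorms measure. The paper instead takes $L^2$-adjoints: $X^{\ap_\mu}$ is skew-adjoint and hits $\phi_\mu f$, $\Op(K)$ is replaced by $\Op(\wt{K})$, and duality then recovers $\norm{K}_{\vk^{S'}_0}^{S'}$. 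Second, a single sharp cutoff $\eta_\mu$ at scale $|w_\mu z_\mu^{-1}|_\mu$ is not enough in the far regime: to match the single-ball pairings of Definition \ref{def mP seminorms} one needs the dyadic annular decomposition of $(1-\eta_\mu)$ as in \eqref{eq decomp 1-vp1}, the scaling factors $2^{\pm m_\mu(Q_\mu+\deg\ap_\mu)}$, and the geometric-sum estimate. You do flag "scale-matching" as a concern, but the fix is the dyadic step, not just bookkeeping. Finally, your appeal to Lemma \ref{lemma pk l2 bddness} for the directions $\mu\notin S$ is not really the right invocation --- that lemma concerns $L^2$-boundedness of a single kernel's partially localized operator, whereas what is actually used after Cauchy--Schwarz is simply that the unlocalized directions contribute $\norm{\Op(K)}_{\mB(L^2)}$ or $\norm{\Op(L)}_{\mB(L^2)}$, which are then absorbed into the $\norm{K}_\vk$ or $\norm{L}_\vk$ terms on the RHS. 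With these corrections the proposal is in line with the paper's methodology.
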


\begin{remark} 
    Recall the $\nu=2$ parameter case recorded in Theorem \ref{thm tame estimate pk}. For $K, L \in \mP^{(k_1, k_2)}$, we have
    \begin{equation}
        \begin{split}
            \norm{K*L}{(k_1, k_2)} \lesssim & \norm{\Op(K)}{\mB(L^2)} \norm{L}{(k_1, k_2)}+ \norm{K}{(k_1, 0)}^{\{1\}}\norm{L}{(0, k_2)}^{\{2\}}\\
            &+ \norm{K}{(0, k_2)}^{\{2\}} \norm{L}{(k_1, 0)}^{\{1\}}+ \norm{K}{(k_1, k_2)}\norm{\Op(L)}{\mB(L^2)}. 
        \end{split}
    \end{equation}
\end{remark}

We only present the proof of the $2$-parameter case in Theorem \ref{thm tame estimate pk} in order to highlight the key ideas in the proof. The general $\nu$-parameter case follows from a few straightforward modifications.

\subsection{Proof of the tame algebra estimate for $\mP^{(k_1, k_2)}$} \

By Remark \ref{remark w=0}, without loss of generality we may assume  that the test functions $\phi_{\mu}$ are centered in a unit ball around the identity in $\R^{q_{\mu}}$ so that $w_{\mu}=0$ for $\mu =1, 2$. By definition \ref{def mP seminorms}, for $\vk=(k_1, k_2) \in \Z_{\geq 0}^2$, we have
\begin{equation}\label{eq pk KL step 1}
    \begin{split}
        \norm{K*L}{(k_1, k_2)}\leq &  \norm{\Op(K)}{\mB(L^2(\R^q))} \norm{\Op(L)}{\mB(L^2(\R^q))} \\
        &+ \sum_{|\ap_1| \leq k_1}  \sup_{\substack{\norm{f}{L^2(\R^q)}  =1;\\
        \norm{g}{L^2(\R^q)} =1 }} \sup_{j_1,l_1 \in \Z }  \sup_{\substack{z_1\in \R^{q_1}; \\ |z_1|_1 \gtrsim  2^{j_1 \vee l_1} }} |\la \phi_1 f, X^{\ap_1} K*L *\g_1 g \ra| \  |z_1|_1^{Q_1+ \deg \ap_1} \\
        &+ \sum_{|\ap_2| \leq k_2}  \sup_{\substack{\norm{f}{L^2(\R^q)}  =1;\\
        \norm{g}{L^2(\R^q)} =1 }} \sup_{j_2,l_2 \in \Z }  \sup_{\substack{z_2\in \R^{q_2}; \\ |z_2|_2 \gtrsim  2^{j_2 \vee l_2} }} |\la \phi_2 f, X^{\ap_2} K*L *\g_2 g \ra| \  |z_2|_2^{Q_2+ \deg \ap_2} \\
        &+\sum_{|\ap| \leq (k_1, k_2)}  \sup_{\substack{\norm{f}{L^2(\R^q)}  =1;\\
        \norm{g}{L^2(\R^q)} =1 }} \sup_{\substack{ (j_1, j_2) \in \Z^2; \\ (l_1, l_2) \in \Z^2 }}  \sup_{\substack{(z_1, z_2) \in \R^q; \\ |z_1|_1 \gtrsim  2^{j_1 \vee l_1}; \\ |z_2|_2 \gtrsim  2^{j_2 \vee l_2} }} |\la \phi_1 \otimes \phi_2 f, X^{\ap} K*L *\g_1 \otimes \g_2 g \ra| \\
        & \hspace{3in} \times |z_1|_1^{Q_1+ \deg \ap_1} |z_2|_2^{Q_2+ \deg \ap_2},
    \end{split}
\end{equation}
where $\supp \phi_{\mu} \subseteq B^{q_{\mu}}(0, 2^{j_{\mu}})$ and $\supp \g_{\mu} \subseteq B^{q_{\mu}} (z_{\mu}, 2^{l_{\mu}})$, for $j_{\mu}, l_{\mu} \in \Z$ and $\mu= 1, 2$.

\subsubsection{Single-parameter tame algebra estimate} \ 

We first prove a single-parameter estimate to bound the second summand on the right-hand side of the equation in \eqref{eq pk KL step 1}, which we record in the following lemma.
\begin{lemma}\label{lemma pk 2nu k1}
Let $k_1 \in \Z_{\geq 0}$. For $K, L \in \mP^{(k_1, 0)}$, we have
    \begin{equation}\label{eq pk kl k1}
        \begin{split}
            \norm{K*L}{(k_1, 0)}^{\{1\}}& \lesssim \norm{K}{(k_1, 0)}^{\{1\}} \norm{\Op(L)}{\mB(L^2(\R^q))} + \norm{\Op(K)}{\mB(L^2(\R^q))} \norm{L}{(k_1, 0)}^{\{1\}}.
        \end{split}
    \end{equation}
\end{lemma}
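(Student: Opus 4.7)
The estimate in Lemma \ref{lemma pk 2nu k1} is the single-parameter tame algebra estimate in the variable $t_1$, with the factor $G_2$ playing only a passive, operator-valued role; my plan is to adapt the classical single-parameter argument of Christ--Geller--G\l owacki--Polin \cite{CGGP92} to the present two-parameter setting, exploiting the direct-product structure of $G_1 \times G_2$ and the fact that the left-invariant vector fields $X^{\ap_1}$ are tangent to the first factor alone.

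Fix $\ap_1$ with $|\ap_1| \leq k_1$, test functions $\phi_1, \g_1 \in \Co{\R^{q_1}}$ supported as in Definition \ref{def mP seminorms}, and unit vectors $f, g \in L^2(\R^q)$. Using $X^{\ap_1}(K*L) = K * X^{\ap_1}L$ (left-invariance of $X$), the quantity to be bounded takes the form
\begin{equation*}
    \bigl|\bigl\la f, M_{\phi_1}\Op(K)\Op(X^{\ap_1}L)M_{\g_1} g\bigr\ra\bigr| \cdot |z_1|_1^{Q_1 + \deg\ap_1},
\end{equation*}
where $M_{\psi}$ denotes multiplication by $\psi(t_1)$ in the first variable. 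The key step is to introduce a smooth cutoff $\eta_1 \in \Co{\R^{q_1}}$ with $\eta_1 \equiv 1$ on $B^{q_1}(0, c|z_1|_1)$ and $\supp \eta_1 \subseteq B^{q_1}(0, 2c|z_1|_1)$, choosing $c = c(C_1) > 0$ small enough that $\supp\phi_1 \subseteq \{\eta_1 \equiv 1\}$ and $\supp\g_1 \subseteq \{\eta_1 \equiv 0\}$. Splitting the middle multiplication as $M_{\eta_1} + M_{1-\eta_1}$ produces two terms, handled dually.

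In the first term, $\eta_1$ itself is a valid test function for the seminorm $\norm{L}{(k_1,0)}^{\{1\}}$ (with $w_1 = 0$, support radius $2c|z_1|_1$, separated by $\sim |z_1|_1$ from $\supp\g_1$), so
\begin{equation*}
    \|M_{\eta_1}\Op(X^{\ap_1}L)M_{\g_1}\|_{\mB(L^2(\R^q))} \lesssim \norm{L}{(k_1, 0)}^{\{1\}}|z_1|_1^{-Q_1 - \deg\ap_1}.
\end{equation*}
Combined with $\norm{\Op(K)}{\mB(L^2(\R^q))}$ and the weight $|z_1|_1^{Q_1 + \deg\ap_1}$, this yields the second summand on the RHS of \eqref{eq pk kl k1}. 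The second term requires a dyadic decomposition $1-\eta_1 = \sum_{m \geq m_0} \chi_m$ into bumps on annular shells $\{|s_1|_1 \sim 2^m\}$ with $2^{m_0} \sim c|z_1|_1$; each shell is covered by an $O(1)$-bounded family of bumps supported in balls of radius $2^m$ at centers $w^{(m,i)}$ with $|w^{(m,i)}|_1 \sim 2^m$, and each piece contributes to $\norm{K}{(k_1, 0)}^{\{1\}}$ via the pair $(\phi_1, \chi_m^{(i)})$. The geometric series $\sum_{m \geq m_0} 2^{-m(Q_1 + \deg\ap_1)}$ sums to $\lesssim |z_1|_1^{-Q_1 - \deg\ap_1}$.

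The main obstacle in the second term is that $\Op(X^{\ap_1}L)$ is not globally $L^2$-bounded for $\ap_1 \neq 0$, so one cannot naively factor the composition through the $L^2$-operator norm of $L$. This is resolved by transferring the derivative $X^{\ap_1}$ off $L$ via the identity $\Op(X^{\ap_1}L) = \Op(L) Y^{\ap_1}$ (with $Y$ the right-invariant vector field associated to $X$) and integrating by parts to redistribute $Y^{\ap_1}$ across the cutoffs $\chi_m^{(i)}$ and $\g_1$. The direct-product form of $G_1 \times G_2$ ensures these manipulations remain tangent to the first factor and commute with everything on $G_2$; the extra factor of $|z_1|_1^{-\deg\ap_1}$ produced by the integration by parts against the cutoffs matches the weight, leaving $\norm{\Op(L)}{\mB(L^2(\R^q))}$ as the $L$-dependent factor. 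Combining the two terms yields the claimed estimate \eqref{eq pk kl k1}.
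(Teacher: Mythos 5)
Your overall strategy is sound and matches the paper's at the level of structure: commute $X^{\ap_1}$ past $\Op(K)$, insert a cutoff in $\R^{q_1}$ to split into a near piece (where $L$ sees no $t_1$-singularity, handled by $\|\Op(K)\|_{\mB(L^2)}\cdot\norm{L}{(k_1,0)}^{\{1\}}$) and a far piece (where $K$ sees no $t_1$-singularity, handled by $\norm{K}{(k_1,0)}^{\{1\}}\cdot\|\Op(L)\|_{\mB(L^2)}$), with Cauchy--Schwarz separating the two kernels and a geometric sum over annuli absorbing the weight. But you make one genuine simplification and one imprecision compared with the paper.

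The simplification is your choice of cutoff scale. The paper takes $\vp_1 \prec \phi_1$ supported in $B^{q_1}(0, c\cdot 2^{j_1})$ at the \emph{same small scale} as $\phi_1$, which forces a three-way split: the near piece $\vp_1 L$, the intermediate annuli $2^{j_1}\le 2^{m_1}\le b_1|z_1|_1$ (where the $\g_1^{m,i}$ are still close to $\phi_1$, so $L$ is away from its singularity, treated like the near case with an additional sum over $m_1$), and the far annuli $2^{m_1}\ge b_1|z_1|_1$. Your $\eta_1$ is supported in $B^{q_1}(0, 2c|z_1|_1)$ at scale comparable to $|z_1|_1$, which collapses the paper's near and intermediate cases into a single near piece: $\eta_1$ itself is a valid test function for $\norm{L}{(k_1,0)}^{\{1\}}$ at scale $j_1'\sim \log_2(c|z_1|_1)$ (with $c$ chosen so that $|z_1|_1 \geq 3C_1 2^{j_1'\vee l_1}$ holds), and the single bound $\|M_{\eta_1}\Op(X^{\ap_1}L)M_{\g_1}\|_{\mB(L^2)}\lesssim\norm{L}{(k_1,0)}^{\{1\}}|z_1|_1^{-Q_1-\deg\ap_1}$ replaces the paper's summation over intermediate scales and the Hölder--geometric-sum argument there. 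This is genuinely cleaner, and the far case is then identical to the paper's.

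The imprecision is in how you handle $Y^{\ap_1}$ in the far term. You propose "integrating by parts to redistribute $Y^{\ap_1}$ across the cutoffs $\chi_m^{(i)}$ \emph{and} $\g_1$." The second target is wrong: $\g_1$ is an \emph{arbitrary} function allowed by Definition \ref{def mP seminorms}, satisfying only $0\le\g_1\le 1$ and a support condition; there is no control whatsoever on $\|Y^{\gamma}\g_1\|_{L^{\infty}}$, so no derivative may land on $\g_1$. The derivative must be distributed between the cutoffs $\chi_m^{(i)}$ that \emph{you construct} (giving $\|Y^{\beta}\chi_m^{(i)}\|_{L^{\infty}}\lesssim 2^{-m\deg\beta}$) and the localized kernel itself: after $\Op(K)$-adjoint and Leibniz, $Y^{\gamma}(\wt{K}*\phi_1 f) = (Y^{\gamma}\wt{K})*\phi_1 f$, and $Y^{\gamma}\wt{K}$ can be rewritten (using $Y^{\gamma}\wt{K} = \pm\widetilde{X^{\gamma'}K}$) so that one recognizes the $K$-seminorm $\norm{K}{(k_1,0)}^{\{1\}}$ with derivative $\gamma'$, giving $\lesssim 2^{-m(Q_1+\deg\gamma')}$. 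Combined, the Leibniz decomposition over $\beta+\gamma=\ap_1$ produces exactly $2^{-m(Q_1+\deg\ap_1)}$ per annulus, matching your claimed geometric series. The paper accomplishes the same thing by taking adjoints of the left-invariant $X^{\ap_1}$ and $\Op(K)$ twice and recognizing $|\la\phi_1 f, X^{\ap_1}K*\g_1^{m_1,i}g\ra|$ directly, without invoking $Y$. The two bookkeeping devices are dual, but your description of where the derivative lands needs correction before your argument closes.

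Once that one point is fixed, your proof works and is a modestly streamlined variant of the paper's.
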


\begin{proof}[Proof of Lemma \ref{lemma pk 2nu k1}]
By Remark \ref{remark w=0}, we can assume that $\supp \phi_1 \subseteq B^{q_1}(0, 2^{j_1})$. Let $\vp_1 \in C^{\infty}_c(\R^{q_1})$ with $\supp \vp_1 \subseteq B^{q_1}(0, c \cdot 2^{j_1})$, for some $c \geq 1$, so that $\phi_1 \prec \vp_1$. By the triangle inequality, 
    \begin{equation}\label{eq lemma pk 2nu k1 step 1}
        \begin{split}
            & \sum_{|\ap_1| \leq k_1}  \sup_{\substack{\norm{f}{L^2(\R^q)}  =1;\\
        \norm{g}{L^2(\R^q)} =1 }} \sup_{j_1,l_1 \in \Z }  \sup_{\substack{z_1 \in \R^{q_1}; \\ |z_1|_1 \gtrsim  2^{j_1 \vee l_1} }} |\la \phi_1 f, X^{\ap_1} K*L *\g_1 g \ra|   \times |z_1|_1^{Q_1+ \deg \ap_1}\\
            & \leq \sum_{|\ap_1| \leq k_1}   \sup_{\substack{\norm{f}{L^2(\R^q)}  =1;\\
        \norm{g}{L^2(\R^q)} =1 }} \sup_{j_1,l_1 \in \Z }  \sup_{\substack{z_1 \in \R^{q_1}; \\ |z_1|_1 \gtrsim  2^{j_1 \vee l_1} }} |\la \phi_1 f, X^{\ap_1} K*\vp_1 L *\g_1 g \ra| \times |z_1|_1^{Q_1+ \deg \ap_1} \\
            &+ \sum_{|\ap_1| \leq k_1}  \sup_{\substack{\norm{f}{L^2(\R^q)}  =1;\\
        \norm{g}{L^2(\R^q)} =1 }} \sup_{j_1,l_1 \in \Z }  \sup_{\substack{z_1 \in \R^{q_1}; \\ |z_1|_1 \gtrsim  2^{j_1 \vee l_1} }} |\la \phi_1 f, X^{\ap_1} K*(1-\vp_1) L *\g_1 g \ra| \times |z_1|_1^{Q_1+ \deg \ap_1}.
        \end{split}
    \end{equation}
    We bound the first and second term on the right-hand side of the inequality in \eqref{eq lemma pk 2nu k1 step 1} separately. Notice that, in the first term, the product kernel $L$ is localized away from its singularity restricted to $\R^{q_1}$ since $\supp \vp_1 \cap \supp \g_1 = \emptyset$. In addition, by the Schwartz kernel theorem, the \textit{left-invariant} vector fields $X^{\ap_1}$ are given by \textit{right-convolution} with a kernel. In other words, for all $f, g\in C^{\infty}_c(\R^{q_1})$, by associativity of convolution\footnote{See Proposition 3.2.3 in \cite{greenbook} for details.}, we can write
    \begin{align}\label{eq X associativity of convo}
        X^{\ap_1} (f*g) = (f*g)*X^{\ap_1}\de_0=f*(g*X^{\ap_1}\de_0) = f*(X^{\ap_1}g).
    \end{align}
    As such, we begin by applying the \textit{left-invariant} vector fields $X^{\ap_1}$ to right of the leftmost convolution; that is, the first term on the right-hand side of the inequality in \eqref{eq lemma pk 2nu k1 step 1} is thus
    \begin{align*}
        &= \sum_{|\ap_1| \leq k_1}\sup_{\substack{\norm{f}{L^2(\R^q)}  =1;\\         \norm{g}{L^2(\R^q)} =1 }} \sup_{j_1,l_1 \in \Z }  \sup_{\substack{z_1 \in \R^{q_1}; \\ |z_1|_1 \gtrsim  2^{j_1 \vee l_1} }}  |\la \phi_1 f,  K* X^{\ap_1}\vp_1 ( L *\g_1 g) \ra|  |z_1|_1^{Q_1+ \deg \ap_1}.
    \end{align*}
    In view of obtaining a tame algebra estimate, we take adjoints and note that the kernel of the $L^2$-adjoint operator $\Op(K)^*$ is given by $\wt{K}(t) = \overline{K}(t^{-1})$. The previous equation is thus
    \begin{align*}
        & =\sum_{|\ap_1| \leq k_1} \sup_{\substack{\norm{f}{L^2(\R^q)}  =1;\\         \norm{g}{L^2(\R^q)} =1 }} \sup_{j_1,l_1 \in \Z }  \sup_{\substack{z_1 \in \R^{q_1}; \\ |z_1|_1 \gtrsim  2^{j_1 \vee l_1} }}  |\la \wt{K} * \phi_1 f,  X^{\ap_1}\vp_1 L *\g_1 g \ra|  |z_1|_1^{Q_1+ \deg \ap_1}.
    \end{align*}
    By the Cauchy-Schwartz inequality, we can at last separate the two product kernels $K$ and $L$. Indeed, the previous equation is
    \begin{align*}
        & \leq \norm{\Op(K)^*}{\mB(L^2(\R^q))}  \sum_{|\ap_1| \leq k_1}  \sup_{\substack{   \norm{g}{L^2(\R^q)} =1 }} \sup_{j_1,l_1 \in \Z }  \sup_{\substack{z_1 \in \R^{q_1}; \\ |z_1|_1 \gtrsim  2^{j_1 \vee l_1} }}\norm{X^{\ap_1}\vp_1 L *\g_1 g}{L^2(\R^q)}  |z_1|_1^{Q_1+ \deg \ap_1}.
    \end{align*}
    By the Leibniz rule, the equation above is
    \begin{equation}\label{eq use once}
        \leq \norm{\Op(K)}{\mB(L^2(\R^q))}  \sum_{|\ap_1| \leq k_1} \sup_{\substack{ \norm{g}{L^2(\R^q)} =1 }} \sup_{j_1,l_1 \in \Z }  \sup_{\substack{z_1 \in \R^{q_1}; \\ |z_1|_1 \gtrsim  2^{j_1 \vee l_1} }} \norm{\wt{\vp}_1 X^{\ap_1} L *\g_1 g}{L^2(\R^q)}  |z_1|_1^{Q_1+ \deg \ap_1},
    \end{equation}
    where $\wt{\vp}_1 = X^{\beta_1} \vp_1$ for some multi-index $|\beta_1| \leq k_1$. 
    Recall that $\phi_1 \prec \vp_1$, we can thus in turn write $\vp_1 \prec \sum_{i \in \fI} \phi_1^i$ where $\{\phi_1^i; i \in \fI \} \subseteq C^{\infty}_c(\R^{q_1})$ is a finite collection of test functions with $0 \leq \phi_1^i \leq 1$, $\supp \phi_1^i \subseteq B^{q_1}(w_1^i, 2^{j_1})$ and $|w_1^i|_1 \leq b_1 |z_1|_1$ for every $i \in \fI$, where the choice of $b_1 \in (0, 1)$ depends on the sub-Riemannian geometry as described in the following remark.

    \begin{remark}\label{remark b1}
        $|\cdot|_1$ is a homogeneous quasi-norm. As such, there exists $C_1\geq 1$ s.t. 
        \begin{align*}
            |z_1|_1 \leq C_1 |w_1^i z_1^{-1}|_1 + C_1 |w_1^i|_1.
        \end{align*}
        Taking $|w_1^i|_1 \leq b_1 |z_1|_1$, we have
        \begin{equation}\label{eq w and b subriem}
            |z_1|_1 - C_1b_1|z_1|_1  \leq  C_1 |w_1^i z_1^{-1}|_1.
        \end{equation}
        We choose $b_1 \in (0, 1)$ so that $C_1b_1 \leq 1/2$. This choice of $b_1$ ensures that $|w_1^i z_1^{-1}|_1 \sim |z_1|_1$. 
    \end{remark}

     By the triangle inequality followed by Hölder's inequality, \eqref{eq use once} is
    \begin{align*}
        & \lesssim \norm{\Op(K) }{\mB(L^2(\R^q))} \sum_{|\ap_1| \leq k_1} \sup_{\substack{ \norm{g}{L^2(\R^q)} =1 }} \sup_{j_1,l_1 \in \Z }  \sup_{i\in \fI} \sup_{\substack{z_1\in \R^{q_1}; \\ |w_1^iz_1|_1 \gtrsim  2^{j_1 \vee l_1} }} \norm{\wt{\vp}_1}{L^{\infty}(\R^{q_1})} \norm{\phi_1^{i} X^{\ap_1} L *\g_1 g}{L^2(\R^q)} \\
        & \hspace{4in} \times |w_1^iz_1^{-1}|_1^{Q_1+ \deg \ap_1}, 
    \end{align*}
    where $\norm{\wt{\vp}_1}{L^{\infty}} \lesssim 1$ since $\wt{\vp}_1 = X^{\beta_1} \vp_1$, up to some finite order $|\beta_1| \leq k_1$. By duality, we recover $\norm{L}{(k_1, 0)}^{\{1\}}$. To summarize, we bound the first term on the right-hand side of the inequality in \eqref{eq lemma pk 2nu k1 step 1} as follows
    \begin{equation*}
        \begin{split}
            \sum_{|\ap_1| \leq k_1} \sup_{\substack{\norm{f}{L^2(\R^q)}  =1;\\         \norm{g}{L^2(\R^q)} =1 }} \sup_{j_1,l_1 \in \Z }  \sup_{\substack{z_1 \in \R^{q_1}; \\ |z_1|_1 \gtrsim  2^{j_1 \vee l_1} }} |\la \phi_1 f, X^{\ap_1} K*\vp_1 L *\g_1 g \ra|  |z_1|_1^{Q_1+ \deg \ap_1}  \\
            \lesssim \norm{\Op(K)}{\mB(L^2(\R^q))}  \norm{L}{(k_1, 0)}^{ \{1\} }.
        \end{split}
    \end{equation*}
    
To estimate the second term on the right-hand side of the inequality in \eqref{eq lemma pk 2nu k1 step 1}, we decompose $(1-\vp_1)$ into dyadic annuli.  
\begin{equation*}
    1- \vp_1 (x_1) = \sum_{m_1 \geq j_1} \eta_1(2^{-m_1} x_1),
\end{equation*}
where $\supp \eta_1(2^{-m_1}x_1) \subseteq \{x_1 \in \R^{q_1}; |x_1|_1 \sim 2^{m_1}\}$. We further decompose each annulus and write
\begin{equation}\label{eq decomp 1-vp1}
    1- \vp_1 (x_1) =  \sum_{m_1 \geq j_1} \sum_{i \in \fI} \g_1^{ m_1, i},
\end{equation}
where $\# \fI \sim 1$, $\supp \g_1^{m_1, i} \subseteq B^{q_1} (z_1^{m_1, i}, 2^{m_1})$ and $|z_1^{m_1, i}|_1 \sim 2^{m_1}$. By substituting the decomposition \eqref{eq decomp 1-vp1} into the second term on the right-hand side of the inequality in \eqref{eq lemma pk 2nu k1 step 1}, we are left to consider
    \begin{equation}\label{eq pk cite next}
        \begin{split}
            & \sum_{|\ap_1| \leq k_1} \sup_{\substack{\norm{f}{L^2(\R^q)}  =1;\\         \norm{g}{L^2(\R^q)} =1 }} \sup_{j_1,l_1 \in \Z }  \sup_{\substack{z_1 \in \R^{q_1}; \\ |z_1|_1 \gtrsim  2^{j_1 \vee l_1} }} |\la \phi_1 f, X^{\ap_1} K* \sum_{m_1 \geq j_1} \sum_{i \in \fI} \g_1^{ m_1, i} L *\g_1 g \ra|  |z_1|_1^{Q_1+ \deg \ap_1}.
        \end{split}
    \end{equation}
    There are two cases to examine separately: 
    \begin{enumerate}
        \item $2^{j_1} \leq 2^{m_1} \leq b_1|z_1|_1$, and

        \item $b_1|z_1|_1 \leq 2^{m_1}$,
    \end{enumerate}
    where $b_1 \in (0, 1)$ depends on the sub-Riemannian geometry as detailed in Remark \ref{remark b1}. 

\begin{remark}\label{remark pk distances}
    In the first case, the test functions $\g_1^{m, i}$ are supported close to $\phi_1$ so that the distance between $\supp \g_1^{m_1, i}$ and $\supp \g_1$ is comparable to the distance between $\supp \phi_1$ and $\supp \g_1$. Indeed, by the ``almost'' triangle inequality satisfied by the homogeneous norm $|\cdot|_1$, there exists $C_1 \geq 1$ s.t. in case $(1)$ above, 
\begin{align*}
    |z_1|_1 \leq C_1  |z_1^{m_1, i}z_1^{-1}|_1 +  C_1 b_1 |z_1|_1.
\end{align*}
Recall $b_1 \in (0, 1)$ is chosen so that $C_1 b_1 \leq 1/2$. This choice guarantees that
\begin{align*}
    |z_1|_1 \sim  |z_1^{m_1, i} z_1^{-1}|_1.
\end{align*}
    In the second case, the test functions $\g_1^{m, i}$ are supported away from $\phi_1$. As such, the distance between $\supp \g_1^{m_1, i}$ and $\supp \phi_1$ is comparable to $2^{m_1}$. 
\end{remark}

We break up the sum accordingly. By the triangle inequality, \eqref{eq pk cite next} is
\begin{equation}\label{eq nu2 k1 g}
    \begin{split}
        & \leq \sum_{|\ap_1| \leq k_1} \sup_{\substack{\norm{f}{L^2(\R^q)}  =1;\\     \norm{g}{L^2(\R^q)} =1 }} \sup_{j_1,l_1 \in \Z }  \sup_{\substack{z_1 \in \R^{q_1}; \\ |z_1|_1 \gtrsim  2^{j_1 \vee l_1} }} \sum_{\substack{m_1 \geq j_1; \\ 2^{m_1} \leq b_1|z_1|_1}} \sup_{i \in \fI} |\la \phi_1 f, X^{\ap_1} K*  \g_1^{ m_1, i} L *\g_1 g \ra|  |z_1|_1^{Q_1+ \deg \ap_1}\\
        &+ \sum_{|\ap_1| \leq k_1}  \sup_{\substack{\norm{f}{L^2(\R^q)}  =1;\\     \norm{g}{L^2(\R^q)} =1 }} \sup_{j_1,l_1 \in \Z }  \sup_{\substack{z_1 \in \R^{q_1}; \\ |z_1|_1 \gtrsim  2^{j_1 \vee l_1} }} \sum_{\substack{m_1 \geq j_1; \\ 2^{m_1} \geq b_1 |z_1|_1}} \sup_{i \in \fI} |\la \phi_1 f, X^{\ap_1} K*  \g_1^{ m_1, i} L *\g_1 g \ra|  |z_1|_1^{Q_1+ \deg \ap_1}.
    \end{split}
\end{equation}


The second term in \eqref{eq nu2 k1 g} can be dealt with in fewer steps so we estimate it next. By Remark \ref{remark pk distances}, $K$ is localized away from its singularity restricted to $\R^{q_1}$. As such, after taking adjoints of the operators $X^{\ap_1}$ and $\Op(K)$, we apply the Cauchy-Schwartz inequality. The second term in \eqref{eq nu2 k1 g} is thus
\begin{align*}
    & \leq \sum_{|\ap_1| \leq k_1}  \sup_{\substack{\norm{f}{L^2(\R^q)}  =1}} \sup_{j_1,l_1 \in \Z }  \sup_{\substack{z_1 \in \R^{q_1}; \\ |z_1|_1 \gtrsim  2^{j_1 \vee l_1} }} \sum_{\substack{m_1 \geq j_1; \\ 2^{m_1} \geq b_1 |z_1|_1}} \sup_{i \in \fI} \norm{\g_1^{ m_1, i} \wt{K}* X^{\ap_1} \phi_1 f}{L^2(\R^q)} \\
    & \times |z_1|_1^{Q_1+ \deg \ap_1}  \norm{\Op(L)}{\mB(L^2(\R^q)} .
\end{align*}
In view of simultaneously obtaining a convergent sum in $m_1$ and recovering $\norm{K}{(k_1, 0)}^{\{1\}}$, we introduce scaling factors and subsequently apply Hölder's inequality to the sum in $m_1$. The equation above is thus
\begin{align*}
    \leq & \sum_{|\ap_1| \leq k_1}  \sup_{\substack{\norm{f}{L^2(\R^q)}  =1 }} \sup_{j_1,l_1 \in \Z }  \sup_{\substack{z_1 \in \R^{q_1}; \\ |z_1|_1 \gtrsim  2^{j_1 \vee l_1} }} \sup_{\substack{m_1 \geq j_1; \\ 2^{m_1} \geq b_1 |z_1|_1}} \sup_{i \in \fI} \norm{\g_1^{ m_1, i} \wt{K}* X^{\ap_1} \phi_1 f}{L^2(\R^q)} 2^{m_1(Q_1 + \deg \ap_1)} \\
    & \times \sum_{\substack{m_1 \geq j_1; \\ 2^{m_1} \geq b_1 |z_1|_1}}  2^{-m_1(Q_1 + \deg \ap_1)} |z_1|_1^{Q_1+ \deg \ap_1} \norm{\Op(L)}{\mB(L^2(\R^q)} .
\end{align*}
The geometric sum is bounded above by a scalar multiple of its largest term. As such, the expression on the right-hand side of the inequality above is
\begin{align*}
    \lesssim & \sum_{|\ap_1| \leq k_1}  \sup_{\substack{\norm{f}{L^2(\R^q)}  =1 }} \sup_{j_1,l_1 \in \Z }  \sup_{\substack{z_1 \in \R^{q_1}; \\ |z_1|_1 \gtrsim  2^{j_1 \vee l_1} }} \sup_{\substack{m_1 \geq j_1; \\ 2^{m_1} \geq b_1 |z_1|_1}} \sup_{i \in \fI} \norm{\g_1^{ m_1, i} \wt{K}* X^{\ap_1} \phi_1 f}{L^2(\R^q)} 2^{m_1(Q_1 + \deg \ap_1)} \\
    & \times |z_1|_1^{-(Q_1 + \deg \ap_1)} |z_1|_1^{Q_1+ \deg \ap_1} \norm{\Op(L)}{\mB(L^2(\R^q)} .
\end{align*}
Recall that $\supp \g_1^{m_1,i} \subseteq B^{q_1}(z_1^{m_1, i}, 2^{m_1})$ and $\supp \phi_1 \subseteq B^{q_1}(0, 2^{j_1})$, where $|z_1^{m_1, i}|_1 \sim 2^{m_1}$. The equation above is thus
\begin{align*}
    &\approx  \sum_{|\ap_1| \leq k_1}  \sup_{\substack{\norm{f}{L^2(\R^q)}  =1  }} \sup_{j_1, m_1 \in \Z }  \sup_{\substack{z_1^{m_1, i} \in \R^{q_1}; \\ |z_1^{m_1, i}|_1 \gtrsim  2^{j_1 \vee m_1} }}  \sup_{i \in \fI} \norm{\g_1^{ m_1, i} \wt{K}* X^{\ap_1} \phi_1 f}{L^2(\R^q)}  |z_1^{m_1, i}|_1^{(Q_1 + \deg \ap_1)}   \\
    &\hspace{2in} \times\norm{\Op(L)}{\mB(L^2(\R^q))}.
\end{align*}
By duality, and by taking adjoints of the operators $X^{\ap_1}$ and $\Op(K)^*$ once more, the above equation is
\begin{align*}
    &=  \sum_{|\ap_1| \leq k_1}  \sup_{\substack{\norm{f}{L^2(\R^q)}  =1;\\     \norm{g}{L^2(\R^q)} =1 }} \sup_{j_1, m_1 \in \Z }  \sup_{\substack{z_1^{m_1, i} \in \R^{q_1}; \\ |z_1^{m_1, i}|_1 \gtrsim  2^{j_1 \vee m_1} }}  \sup_{i \in \fI} | \la   \phi_1 f, X^{\ap_1} K*\g_1^{ m_1, i} g\ra |  |z_1^{m_1, i}|_1^{Q_1 + \deg \ap_1}  \\
    &\hspace{2in} \times \norm{\Op(L)}{\mB(L^2(\R^q))}.
\end{align*}
We recognize the seminorms at last. Indeed, the expression on the right-hand side of the equation above is
\begin{align*}
    &\lesssim \norm{K}{(k_1, 0)}^{\{1\} } \norm{\Op(L)}{\mB(L^2(\R^q))}.
\end{align*}
We have thus bounded the second term on the right-hand side of the inequality in \eqref{eq nu2 k1 g}.

To complete the proof of Lemma \ref{lemma pk 2nu k1}, it remains to estimate the first term on the right-hand side of the inequality in \eqref{eq nu2 k1 g}. In contrast to the situation above, notice that the distribution $L$ is localized away from its singularity restricted to $\R^{q_1}$. Recalling that the \textit{left-invariant} differential operators $X^{\ap_1}$ commute with \textit{right-invariant} operators $\Op(K)$, we write
\begin{align*}
    &\sum_{|\ap_1| \leq k_1} \sup_{\substack{\norm{f}{L^2(\R^q)}  =1;\\     \norm{g}{L^2(\R^q)} =1 }} \sup_{j_1,l_1 \in \Z }  \sup_{\substack{z_1 \in \R^{q_1}; \\ |z_1|_1 \gtrsim  2^{j_1 \vee l_1} }} \sum_{\substack{m_1 \geq j_1; \\ 2^{m_1} \leq b_1|z_1|_1}} \sup_{i \in \fI} |\la \phi_1 f, X^{\ap_1} ( K*  (\g_1^{ m_1, i} L *\g_1 g)) \ra|  |z_1|_1^{Q_1+ \deg \ap_1}\\
    &=\sum_{|\ap_1| \leq k_1}  \sup_{\substack{\norm{f}{L^2(\R^q)}  =1;\\     \norm{g}{L^2(\R^q)} =1 }} \sup_{j_1,l_1 \in \Z }  \sup_{\substack{z_1 \in \R^{q_1}; \\ |z_1|_1 \gtrsim  2^{j_1 \vee l_1} }} \sum_{\substack{m_1 \geq j_1; \\ 2^{m_1} \leq b_1|z_1|_1}}\sup_{i \in \fI} |\la \phi_1 f,  K*X^{\ap_1}  (\g_1^{ m_1, i} L *\g_1 g) \ra|  |z_1|_1^{Q_1+ \deg \ap_1}. 
\end{align*}
By retracing the steps above, we successively take adjoints of $\Op(K)$, apply the Cauchy-Schwartz inequality, introduce scaling factors, and apply Hölder's inequality on the sum in $m_1$. The equation above is thus
\begin{align*}
    & \lesssim \norm{\Op(K)^*}{\mB(L^2(\R^q))} \sum_{|\ap_1| \leq k_1}  \sup_{\substack{     \norm{g}{L^2(\R^q)} =1 }} \sup_{j_1,l_1 \in \Z }  \sup_{\substack{z_1 \in \R^{q_1}; \\ |z_1|_1 \gtrsim  2^{j_1 \vee l_1} }} \sup_{\substack{m_1 \geq j_1; \\ 2^{m_1} \leq b_1|z_1|_1}} \sup_{i \in \fI} \\
    & \times \norm{X^{\ap_1}  \g_1^{ m_1, i} L *\g_1 g}{L^2(\R^q)}    |z_1^{m_1, i} z_1^{-1}|_1^{Q_1+ \deg \ap_1} |z_1|_1^{Q_1+ \deg \ap_1} \sum_{\substack{m_1 \geq j_1; \\ 2^{m_1} \leq b_1|z_1|_1; \\ |z_1^{m_1, i} z_1^{-1}|_1 \gtrsim |z_1|_1 }} |z_1^{m_1, i} z_1^{-1}|_1^{-(Q_1+ \deg \ap_1)}. 
\end{align*}
After bounding the geometric sum by a scalar multiple of its largest summand, the equation above is
\begin{align*}
    \lesssim & \norm{\Op(K)}{\mB(L^2(\R^q))} \sum_{|\ap_1| \leq k_1}  \sup_{\substack{     \norm{g}{L^2(\R^q)} =1 }} \sup_{j_1,l_1 \in \Z }  \sup_{\substack{z_1 \in \R^{q_1}; \\ |z_1|_1 \gtrsim  2^{j_1 \vee l_1} }} \sup_{\substack{m_1 \geq j_1; \\ 2^{m_1} \leq b_1|z_1|_1}} \sup_{i \in \fI} \\
    & \times \norm{X^{\ap_1}  \g_1^{ m_1, i} L *\g_1 g}{L^2(\R^q)}    |z_1^{m_1, i} z_1^{-1}|_1^{Q_1+ \deg \ap_1} |z_1|_1^{Q_1+ \deg \ap_1}  |z_1|_1^{-(Q_1+ \deg \ap_1)}. 
\end{align*}
After commuting the differential operators and the test functions $\g_1^{m_1, i}$, by duality, we recover the seminorm $\norm{L}{(k_1, 0)}^{\{1\}}$. Thus concluding the proof of Lemma \ref{lemma pk 2nu k1}.
\end{proof}

By symmetry, we estimate the second term in \eqref{eq pk KL step 1} similarly and obtain
\begin{equation}
    \begin{split}
        \norm{K*L}{(0, k_2)} & \lesssim \norm{K}{(0, k_2)}^{ \{2\} } \norm{\Op(L)}{\mB(L^2(\R^q)} + \norm{\Op(K)}{\mB(L^2(\R^q)} \norm{L}{(0, k_2)}^{ \{2\} }.
    \end{split}
\end{equation}

\subsubsection{Multi-parameter tame algebra estimate}  \ 

To complete the proof of the tame algebra estimate in Theorem \ref{thm tame estimate pk}, it remains to estimate the fourth and last term on the right-hand side of the inequality in \eqref{eq pk KL step 1}. We do so in the next lemma. 
\begin{lemma}\label{lemma pk 2nu k1 k2}
For $k_1, k_2 \in \Z_{\geq 0}$, we have
\begin{equation*}
    \begin{split}
        &\norm{K*L}{(k_1, k_2)}^{ \{1, 2\} } \lesssim \norm{\Op(K)}{\mB(L^2(\R^q))} \norm{L}{(k_1, k_2)} +  \norm{K}{(k_1, k_2)}\norm{\Op(L)}{\mB(L^2(\R^q))} \\
        &\hspace{2in} + \norm{K}{(k_1, 0)}^{ \{1\} } \norm{L}{(0, k_2)}^{ \{2\} } +  \norm{K}{(0, k_2)}^{ \{2\} } \norm{L}{(k_1, 0)}^{ \{1\} }.
    \end{split}
\end{equation*}
The implicit constant depends on $(k_1, k_2) \in \Z^2_{\geq 0}$.
\end{lemma}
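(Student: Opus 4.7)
The plan is to iterate the dyadic-annulus argument from Lemma \ref{lemma pk 2nu k1} independently in both factor spaces. By Remark \ref{remark w=0} I may assume $w_\mu = 0$, so each $\phi_\mu$ is supported around the origin of $\R^{q_\mu}$. For fixed scales $j_\mu, l_\mu$ and basepoints $z_\mu$ with $|z_\mu|_\mu \gtrsim 2^{j_\mu \vee l_\mu}$, I introduce cutoffs $\vp_\mu \in \Co{\R^{q_\mu}}$ supported in $B^{q_\mu}(0, c\, 2^{j_\mu})$ with $\phi_\mu \prec \vp_\mu$, and split the middle convolutand $L$ via the tensor partition of unity
\beqq
1 \ = \ \vp_1 \vp_2 \ + \ (1-\vp_1)\vp_2 \ + \ \vp_1(1-\vp_2) \ + \ (1-\vp_1)(1-\vp_2)
\eeqq
in the middle variables $(y_1,y_2)$. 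This yields four contributions $I_1,I_2,I_3,I_4$, and the four named terms in the lemma arise one per piece.

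For $I_1$ (the $\vp_1 \otimes \vp_2$ piece), $L$ sits away from its singularity in \emph{both} factors. I commute $X^{\ap}$ through the right convolution $K*(\,\cdot\,)$, take the $L^2$-adjoint of $\Op(K)$, apply Cauchy--Schwartz, and distribute $X^\ap$ via Leibniz across $\vp_1 \otimes \vp_2\, L * \g_1 \otimes \g_2\, g$. Writing $\vp_\mu \prec \sum_{i_\mu} \phi_\mu^{i_\mu}$ with $|w_\mu^{i_\mu}|_\mu \lesssim b_\mu|z_\mu|_\mu$ as in Remark \ref{remark b1} guarantees $|w_\mu^{i_\mu} z_\mu^{-1}|_\mu \sim |z_\mu|_\mu$, so the weight $|z_1|_1^{Q_1+\deg\ap_1}|z_2|_2^{Q_2+\deg\ap_2}$ is exactly what the seminorm $\norm{L}{(k_1,k_2)}$ demands after duality, producing the contribution $\norm{\Op(K)}{\mB(L^2(\R^q))} \norm{L}{(k_1,k_2)}$.

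For $I_4$ (the $(1-\vp_1)(1-\vp_2)$ piece), I decompose each complement into dyadic annuli $1-\vp_\mu = \sum_{m_\mu \geq j_\mu} \sum_{i_\mu} \g_\mu^{m_\mu,i_\mu}$ with $\supp \g_\mu^{m_\mu,i_\mu} \subseteq B^{q_\mu}(z_\mu^{m_\mu,i_\mu}, 2^{m_\mu})$ and $|z_\mu^{m_\mu,i_\mu}|_\mu \sim 2^{m_\mu}$, and split each $m_\mu$-sum according to $2^{m_\mu} \leq b_\mu |z_\mu|_\mu$ versus $2^{m_\mu} \geq b_\mu|z_\mu|_\mu$, as dictated by Remark \ref{remark pk distances}. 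In each of the four resulting sub-regimes the adjoint-plus-Cauchy--Schwartz recipe of Lemma \ref{lemma pk 2nu k1} is applied independently in each factor: when $2^{m_\mu} \leq b_\mu |z_\mu|_\mu$ the distribution $L$ is away from its $\mu$-singularity (derivatives migrate to the $L$-side and $\Op(K)$ contributes its operator norm), while when $2^{m_\mu} \geq b_\mu|z_\mu|_\mu$ it is $K$ that is away from its $\mu$-singularity (derivatives stay with $K$ and $\Op(L)$ contributes its operator norm). Inserting the artificial factor $2^{m_\mu(Q_\mu+\deg\ap_\mu)} \cdot 2^{-m_\mu(Q_\mu+\deg\ap_\mu)}$ in each factor makes the $m_\mu$-sum geometric, bounded by a constant multiple of its largest term. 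The four sub-regimes produce, respectively, the four terms on the right-hand side of the lemma.

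The mixed contributions $I_2,I_3$, where $L$ is multiplied by $(1-\vp_1)\vp_2$ or $\vp_1(1-\vp_2)$, are handled by combining the two techniques: the $\vp_\mu$ side follows the $I_1$-argument in that factor, while the $(1-\vp_{\mu'})$ side undergoes the dyadic decomposition of the $I_4$-argument, yielding two sub-regimes each that fold into the four named terms. The chief obstacle throughout is the \emph{bookkeeping} of weights: in every sub-regime one must verify that the scaling $|z_1|_1^{Q_1+\deg\ap_1}|z_2|_2^{Q_2+\deg\ap_2}$ is correctly absorbed, either into the weight of a $\mP^{\vk^S}$-seminorm on the side sitting away from its singularity, or into a geometric $m_\mu$-sum on the side sitting close to it. Because the factorized partition of unity decouples the $\mu=1$ and $\mu=2$ analyses, and because Remark \ref{remark b1}'s choice of $b_\mu \in (0,1)$ controls both geometries uniformly, no new analytic ingredient beyond Lemma \ref{lemma pk 2nu k1} enters.
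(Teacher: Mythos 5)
Your proposal follows the paper's proof essentially verbatim: the same tensor partition of unity $1=\vp_1\vp_2+\vp_1(1-\vp_2)+(1-\vp_1)\vp_2+(1-\vp_1)(1-\vp_2)$ splitting into four contributions (the paper's \eqref{eq pk vp1 vp2} and Lemmas \ref{lemma pk vp1 vp2 term 1}--\ref{lemma pk 1_vp1 1_vp2 term 4}), with each piece handled by commuting $X^\ap$ through $\Op(K)$, adjoints plus Cauchy--Schwartz, dyadic-annulus decomposition of $1-\vp_\mu$ split at the threshold $2^{m_\mu} \sim b_\mu|z_\mu|_\mu$, and artificial scaling factors to make the $m_\mu$-sums geometric. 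One small imprecision in your opening summary: the four pieces do not produce the four terms ``one per piece'' --- in the paper, the $\vp_1\vp_2$ piece yields only $\norm{\Op(K)}{\mB(L^2)}\norm{L}{(k_1,k_2)}$, the two mixed pieces each yield two terms, and the $(1-\vp_1)(1-\vp_2)$ piece yields all four --- but your detailed treatment of each piece correctly reflects this, so the bound comes out the same.
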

We first localize further in both factor spaces. Let $\phi_1 \prec \vp_1$ and $\phi_2 \prec \vp_2$. By the triangle inequality, 
\begin{equation}\label{eq pk vp1 vp2}
    \begin{split}
    & \sum_{|\ap| \leq (k_1, k_2)}  \sup_{\substack{\norm{f}{L^2(\R^q)}  =1;\\
        \norm{g}{L^2(\R^q)} =1 }} \sup_{j,l \in \Z^2 }  \sup_{\substack{z\in \R^q; \\ |z_1|_1 \gtrsim  2^{j_1 \vee l_1}; \\ |z_2|_2 \gtrsim  2^{j_2 \vee l_2} }} |\la \phi_1 \otimes \phi_2 f, X^{\ap} K*L *\g_1 \otimes \g_2 g \ra| \\
    & \hspace{2in} \times  |z_1|^{Q_1+ \deg \ap_1}  |z_2|_2^{Q_2+ \deg \ap_2}\\
        &\leq \sum_{|\ap| \leq (k_1, k_2)}  \sup_{\substack{\norm{f}{L^2(\R^q)}  =1;\\
        \norm{g}{L^2(\R^q)} =1 }} \sup_{j,l \in \Z^2 }  \sup_{\substack{z\in \R^q; \\ |z_1|_1 \gtrsim  2^{j_1 \vee l_1}; \\ |z_2|_2 \gtrsim  2^{j_2 \vee l_2} }} |\la \phi_1 \otimes \phi_2 f, X^{\ap} K*\vp_1 \otimes \vp_2 L *\g_1 \otimes \g_2 g \ra| \\
    & \hspace{2in} \times  |z_1|_1^{Q_1+ \deg \ap_1}  |z_2|_2^{Q_2+ \deg \ap_2}\\
    &+ \sum_{|\ap| \leq (k_1, k_2)}  \sup_{\substack{\norm{f}{L^2(\R^q)}  =1;\\
        \norm{g}{L^2(\R^q)} =1 }} \sup_{j,l \in \Z^2 }  \sup_{\substack{z\in \R^q; \\ |z_1|_1 \gtrsim  2^{j_1 \vee l_1}; \\ |z_2|_2 \gtrsim  2^{j_2 \vee l_2} }} |\la \phi_1 \otimes \phi_2 f, X^{\ap} K*\vp_1 \otimes (1-\vp_2) L *\g_1 \otimes \g_2 g \ra| \\
    & \hspace{2in} \times  |z_1|_1^{Q_1+ \deg \ap_1}  |z_2|_2^{Q_2+ \deg \ap_2}\\
    &+ \sum_{|\ap| \leq (k_1, k_2)}  \sup_{\substack{\norm{f}{L^2(\R^q)}  =1;\\
        \norm{g}{L^2(\R^q)} =1 }} \sup_{j,l \in \Z^2 }  \sup_{\substack{z\in \R^q; \\ |z_1|_1 \gtrsim  2^{j_1 \vee l_1}; \\ |z_2|_2 \gtrsim  2^{j_2 \vee l_2} }} |\la \phi_1 \otimes \phi_2 f, X^{\ap} K*(1-\vp_1) \otimes \vp_2 L *\g_1 \otimes \g_2 g \ra| \\
    & \hspace{2in}  \times  |z_1|_1^{Q_1+ \deg \ap_1}  |z_2|_2^{Q_2+ \deg \ap_2}\\
    &+ \sum_{|\ap| \leq (k_1, k_2)}  \sup_{\substack{\norm{f}{L^2(\R^q)}  =1;\\
        \norm{g}{L^2(\R^q)} =1 }} \sup_{j,l \in \Z^2 }  \sup_{\substack{z\in \R^q; \\ |z_1|_1 \gtrsim  2^{j_1 \vee l_1}; \\ |z_2|_2 \gtrsim  2^{j_2 \vee l_2} }} |\la \phi_1 \otimes \phi_2 f, X^{\ap} K*(1-\vp_1) \otimes (1-\vp_2) L *\g_1 \otimes \g_2 g \ra| \\
    & \hspace{2in} \times  |z_1|_1^{Q_1+ \deg \ap_1}  |z_2|_2^{Q_2+ \deg \ap_2}.
    \end{split}
\end{equation}
The estimate in Lemma \ref{lemma pk 2nu k1 k2} follows from the next four lemmas. Each lemma establishes an estimate for each of the four terms on the right-hand side of the inequality in \eqref{eq pk vp1 vp2}. 
\begin{lemma}\label{lemma pk vp1 vp2 term 1}
    Let $(k_1, k_2) \in \Z_{\geq 0}^2$. Suppose $K, L \in \mP^{(k_1, k_2)}$. Then, we have
    \begin{align*}
        \sum_{|\ap| \leq (k_1, k_2)} \sup_{\substack{\norm{f}{L^2}  =1;\\
        \norm{g}{L^2} =1 }} \sup_{j,l \in \Z^2 }  \sup_{\substack{(z_1, z_2) \in \R^q; \\ |z_1|_1 \gtrsim  2^{j_1 \vee l_1}; \\ |z_2|_2 \gtrsim  2^{j_2 \vee l_2} }} |\la \phi_1 \otimes \phi_2 f, X^{\ap} K*\vp_1 \otimes \vp_2 L *\g_1 \otimes \g_2 g \ra| |z_1|_1^{Q_1+ \deg \ap_1}  |z_2|_2^{Q_2+ \deg \ap_2}\\
        \lesssim \norm{\Op(K)}{\mB(L^2(\R^q))} \norm{L}{(k_1, k_2)}. 
    \end{align*}
\end{lemma}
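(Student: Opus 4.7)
The proof will closely mirror the estimate of the first term on the right-hand side of \eqref{eq lemma pk 2nu k1 step 1} in the single-parameter Lemma \ref{lemma pk 2nu k1}, but executed simultaneously in both factor spaces. The essential observation is that the supports of $\vp_1 \otimes \vp_2$ (near the origin in each factor) and of $\g_1 \otimes \g_2$ (centered at $z_\mu$ with $|z_\mu|_\mu \gtrsim 2^{j_\mu \vee l_\mu}$) are separated in both coordinates, so that $\vp_1 \otimes \vp_2 L$ sits strictly away from the full cross singularity of $L$. This is exactly the regime in which $\norm{L}{(k_1,k_2)}$, restricted to the maximal subset $S = \{1,2\}$, controls pairings with localized bump functions on both sides.

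First, I would use the associativity of convolution and the left-invariance of the vector fields, as in \eqref{eq X associativity of convo}, to slide $X^{\ap}$ off of $K$:
\begin{equation*}
    X^{\ap}\bigl(K * \vp_1 \otimes \vp_2 L * \g_1 \otimes \g_2 g\bigr) = K * X^{\ap}\bigl(\vp_1 \otimes \vp_2 L * \g_1 \otimes \g_2 g\bigr).
\end{equation*}
Then I would take $L^2$-adjoints of $\Op(K)$ and apply the Cauchy-Schwartz inequality to separate $K$ from the rest:
\begin{equation*}
    \bigl|\la \phi_1 \otimes \phi_2 f, K * X^{\ap}(\vp_1 \otimes \vp_2 L * \g_1 \otimes \g_2 g)\ra\bigr| \leq \norm{\Op(K)}{\mB(L^2(\R^q))} \norm{X^{\ap}\bigl(\vp_1 \otimes \vp_2 L * \g_1 \otimes \g_2 g\bigr)}{L^2(\R^q)}.
\end{equation*}
This factors out $\norm{\Op(K)}{\mB(L^2(\R^q))}$, and reduces the problem to bounding the remaining $L^2$-norm, multiplied by $|z_1|_1^{Q_1+\deg \ap_1}|z_2|_2^{Q_2+\deg \ap_2}$, by $\norm{L}{(k_1,k_2)}$.

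Next, I would apply the Leibniz rule for the left-invariant vector fields $X^{\ap}$ to distribute derivatives between the cutoffs $\vp_1 \otimes \vp_2$ and the convolution $L * \g_1 \otimes \g_2 g$, producing a finite sum of terms of the form $(X^{\beta_1}\vp_1 \otimes X^{\beta_2}\vp_2) \cdot X^{\ap - \beta}(L * \g_1 \otimes \g_2 g)$ with $|\beta| \leq |\ap| \leq (k_1,k_2)$. The functions $X^{\beta_\mu}\vp_\mu$ are uniformly bounded. Following the recipe used in \eqref{eq use once}, I would write $\vp_\mu \prec \sum_{i_\mu \in \fI_\mu} \phi_\mu^{i_\mu}$ where $\{\phi_\mu^{i_\mu}\}$ is a finite collection of bump functions with $\supp \phi_\mu^{i_\mu} \subseteq B^{q_\mu}(w_\mu^{i_\mu}, 2^{j_\mu})$ and $|w_\mu^{i_\mu}|_\mu \leq b_\mu |z_\mu|_\mu$; the choice of $b_\mu \in (0,1)$ dictated by the sub-Riemannian geometry (as in Remark \ref{remark b1}) guarantees $|w_\mu^{i_\mu} z_\mu^{-1}|_\mu \sim |z_\mu|_\mu$. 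Applying the triangle inequality, and expressing the $L^2$ norm by duality against a unit-norm $h \in L^2(\R^q)$, the remaining quantity becomes a sup over pairings
\begin{equation*}
    \bigl| \la \phi_1^{i_1} \otimes \phi_2^{i_2} h, X^{\ap}L * \g_1 \otimes \g_2 g \ra \bigr| \cdot |w_1^{i_1}z_1^{-1}|_1^{Q_1+\deg\ap_1} |w_2^{i_2}z_2^{-1}|_2^{Q_2+\deg\ap_2},
\end{equation*}
which is bounded by the $S = \{1,2\}$ component of $\norm{L}{(k_1,k_2)}$ from Definition \ref{def mP seminorms}. Since $|w_\mu^{i_\mu} z_\mu^{-1}|_\mu \sim |z_\mu|_\mu$, the weights reconstitute the ones in the supremum we started with, and we obtain the desired bound $\norm{\Op(K)}{\mB(L^2(\R^q))} \norm{L}{(k_1, k_2)}$.

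The main obstacle I anticipate is a careful bookkeeping of the Leibniz expansion for left-invariant vector fields on the non-abelian group $\R^{q_1} \times \R^{q_2}$: after the product rule, the intermediate derivatives of $L$ have order strictly less than $(k_1,k_2)$ in one or both factors, and these mixed-order terms must still be absorbed into $\norm{L}{(k_1,k_2)}$, which requires exploiting the fact that $\norm{L}{(k_1,k_2)}$ by Definition \ref{def mP seminorms} is a \emph{sum} over all subsets $S \subseteq \{1,2\}$ that dominates every lower-order localized pairing. Tracking the scale factors $|w_\mu^{i_\mu} z_\mu^{-1}|_\mu \sim |z_\mu|_\mu$ uniformly in the decomposition, so that the weights match exactly those appearing in the target seminorm, is the key bookkeeping step.
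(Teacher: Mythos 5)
Your proposal is correct and follows the same route as the paper's proof: slide $X^{\ap}$ off $K$ by convolution associativity, take adjoints of $\Op(K)$ and apply Cauchy--Schwarz to extract $\norm{\Op(K)}{\mB(L^2(\R^q))}$, Leibniz on the cutoffs $\vp_1 \otimes \vp_2$, decompose $\vp_\mu$ into translated bumps $\phi_\mu^{i_\mu}$ satisfying $|w_\mu^{i_\mu}z_\mu^{-1}|_\mu \sim |z_\mu|_\mu$ via Remark \ref{remark b1}, and recover $\norm{L}{(k_1,k_2)}$ by duality. Your reordering of Cauchy--Schwarz and Leibniz relative to the paper is immaterial; the only small inaccuracy is in your final paragraph, where the lower-order Leibniz terms are actually already absorbed by the $S=\{1,2\}$ component of $\norm{L}{(k_1,k_2)}$ alone (since $\norm{L}{\vk^{\{1,2\}}}^{\{1,2\}}$ is a sum over all $|\ap|\leq(k_1,k_2)$), not by invoking the sum over subsets $S$, which tracks which factor spaces are localized rather than derivative orders.
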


\begin{proof}[Proof of Lemma \ref{lemma pk vp1 vp2 term 1}]
We observe that $L$ is localized away from its singularity on the ``cross'' in $\R^{q_1}\times \R^{q_2}$. It can thus be identified with a smooth function on the whole space. Since the \textit{left-invariant} operators $X^{\ap_1}$ commute with the \textit{right-invariant} operators $\Op(K)$, 
\begin{align*}
    &\sum_{|\ap| \leq (k_1, k_2)}  \sup_{\substack{\norm{f}{L^2(\R^q)}  =1;\\
    \norm{g}{L^2(\R^q)} =1 }} \sup_{j,l \in \Z^2 }  \sup_{\substack{z\in \R^q; \\ |z_1|_1 \gtrsim  2^{j_1 \vee l_1}; \\ |z_2|_2 \gtrsim  2^{j_2 \vee l_2} }} |\la \phi_1 \otimes \phi_2 f, X^{\ap} (K*\vp_1 \otimes \vp_2 L *\g_1 \otimes \g_2 g )\ra| \\
    & \hspace{3in} \times |z_1|_1^{Q_1+ \deg \ap_1}  |z_2|_2^{Q_2+ \deg \ap_2} \\
    &= \sum_{|\ap| \leq (k_1, k_2)}  \sup_{\substack{\norm{f}{L^2(\R^q)}  =1;\\
    \norm{g}{L^2(\R^q)} =1 }} \sup_{j,l \in \Z^2 }  \sup_{\substack{z\in \R^q; \\ |z_1|_1 \gtrsim  2^{j_1 \vee l_1}; \\ |z_2|_2 \gtrsim  2^{j_2 \vee l_2} }} |\la \phi_1 \otimes \phi_2 f,  K*  \wt{\vp}_1 \otimes \wt{\vp}_2 X^{\ap} (L *\g_1 \otimes \g_2 g )\ra| \\
    & \hspace{3in} \times |z_1|_1^{Q_1+ \deg \ap_1}  |z_2|_2^{Q_2+ \deg \ap_2},
\end{align*}
where $\wt{\vp}_{\mu} =X^{\beta_{\mu}} \vp_{\mu}$, for some multi-index $\beta_{\mu} \in \N^{q_{\mu}}$ with $\mu =1, 2$. As in the proof of Lemma \ref{lemma pk 2nu k1}, we successively, take adjoints of $\Op(K)$ and apply the Cauchy-Schwartz inequality. The equation above is thus
\begin{equation}
    \begin{split}
        & \lesssim \norm{\Op(K)}{\mB(L^2(\R^q))} \sum_{|\ap| \leq (k_1, k_2)}  \sup_{\substack{
        \norm{g}{L^2(\R^q)} =1 }} \sup_{j,l \in \Z^2 }  \sup_{\substack{z\in \R^q; \\ |z_1|_1 \gtrsim  2^{j_1 \vee l_1}; \\ |z_2|_2 \gtrsim  2^{j_2 \vee l_2} }}  \norm{\vp_1 \otimes \vp_2 X^{\ap} L *\g_1 \otimes \g_2 g}{L^2(\R^q)}\\
        & \hspace{3in} \times \norm{\wt{\vp}_1 \otimes \wt{\vp}_2}{L^{\infty}}  |z_1|_1^{Q_1+ \deg \ap_1}  |z_2|_2^{Q_2+ \deg \ap_2},
    \end{split}
\end{equation}
where $\norm{\wt{\vp}_1 \otimes \wt{\vp}_2}{L^{\infty}}\lesssim 1 $ as $\wt{\vp}_{\mu} = X^{\beta_{\mu}}\vp_{\mu}$ up to some finite order $|\beta_{\mu}| \leq k_{\mu}$ for $\mu =1, 2$. 

Recall, that $\phi_{\mu} \prec \vp_{\mu}$ for $\mu = 1, 2$. We write, $\vp_{\mu} = \sum_{i_{\mu} \in \fI_{\mu}} \phi_{\mu}^{i_{\mu}}$, for some finite index set $\fI_{\mu}$, with $\supp \phi_{\mu}^{i_{\mu}} \subseteq B^{q_{\mu}} (w_{\mu}^{i_{\mu}}, 2^{j_{\mu}})$ and $|w_{\mu}^{i_{\mu}}|_{\mu} \leq b_{\mu}|z_{\mu}|_{\mu}$ (see Remark \ref{remark b1} for details on the choice of $b_1, b_2 \in (0, 1)$). By our choice of $b_1, b_2 \in (0, 1)$, we can ensure that $|w_1^{i_1}z_1^{-1}|_1 \sim |z_1|_1$ and $|w_2^{i_2}z_2^{-1}|_2 \sim |z_2|_2$. Hence, by the triangle inequality, the equation above is
\begin{equation*}
    \begin{split}
        & \lesssim \norm{\Op(K)}{\mB(L^2(\R^q))} \sum_{|\ap| \leq (k_1, k_2)}   \sup_{\substack{
        \norm{g}{L^2(\R^q)} =1 }} \sup_{j,l \in \Z^2 }  \sup_{\substack{z\in \R^q; \\ |z_1|_1 \gtrsim  2^{j_1 \vee l_1}; \\ |z_2|_2 \gtrsim  2^{j_2 \vee l_2} }} \sup_{\substack{i_1 \in \fI_1; \\ i_2 \in \fI_2}}\norm{\phi_1^{i_1} \otimes \phi_2^{i_{2}} X^{\ap} L *\g_1 \otimes \g_2 g}{L^2(\R^q)}  \\
        &\hspace{2.5in} \times |w_1^{i_1}z_1^{-1}|_1^{Q_1+ \deg \ap_1}  |w_2^{i_2}z_2^{-1}|_2^{Q_2+ \deg \ap_2}.
    \end{split}
\end{equation*}
By duality, we recognize $\norm{L}{(k_1, k_2)}$ and conclude the proof of Lemma \ref{lemma pk vp1 vp2 term 1}. 
\end{proof}

We estimate the second and third terms in \eqref{eq pk vp1 vp2} with the following two lemmas. 

\begin{lemma}\label{lemma vp1 1-vp2 term 2}
    Let $(k_1, k_2) \in \Z_{\geq 0}^2$. Suppose $K, L \in \mP^{(k_1, k_2)}$. Then, we have
        \begin{equation*}
        \begin{split}
           \sum_{|\ap| \leq (k_1, k_2)}  \sup_{\substack{\norm{f}{L^2(\R^q)}  =1;\\
            \norm{g}{L^2(\R^q)} =1 }} \sup_{j,l \in \Z^2 }  \sup_{\substack{z\in \R^q; \\ |z_1|_1 \gtrsim  2^{j_1 \vee l_1}; \\ |z_2|_2 \gtrsim  2^{j_2 \vee l_2} }} |\la \phi_1 \otimes \phi_2 f, X^{\ap} K*\vp_1 \otimes (1-\vp_2) L *\g_1 \otimes \g_2 g \ra|\\
            \times  |z_1|_1^{Q_1+ \deg \ap_1}  |z_2|_2^{Q_2+ \deg \ap_2}\\
            \lesssim \norm{\Op(K)}{\mB(L^2(\R^q))} \norm{L}{(k_1, k_2)}  + \norm{K}{(0, k_2)}^{ \{2\} } \norm{L}{(k_1, 0)}^{ \{1\} }.
        \end{split}
        \end{equation*}
    \end{lemma}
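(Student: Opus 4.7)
The plan is to mirror the dyadic decomposition and case split from the proof of Lemma~\ref{lemma pk 2nu k1}, applied now to $1 - \vp_2$. First I would write
\begin{align*}
    1 - \vp_2(x_2) = \sum_{m_2 \geq j_2} \sum_{i_2 \in \fI} \g_2^{m_2, i_2}(x_2),
\end{align*}
with $\supp \g_2^{m_2, i_2} \subseteq B^{q_2}(z_2^{m_2, i_2}, 2^{m_2})$ and $|z_2^{m_2, i_2}|_2 \sim 2^{m_2}$, and split the sum over $m_2$ into the ranges (a) $2^{m_2} \leq b_2 |z_2|_2$ and (b) $2^{m_2} \geq b_2 |z_2|_2$, where $b_2 \in (0,1)$ is chosen as in Remark~\ref{remark b1}. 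This ensures $|z_2^{m_2, i_2} z_2^{-1}|_2 \sim |z_2|_2$ in case (a) and $|z_2^{m_2, i_2} z_2^{-1}|_2 \sim 2^{m_2}$ in case (b).

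In case (a), the piece $(\vp_1 \otimes \g_2^{m_2, i_2}) L$ is separated from $\supp(\g_1 \otimes \g_2)$ in both factors. Commuting the left-invariant derivatives $X^{\ap}$ through $\Op(K)$ by associativity, taking the adjoint of $\Op(K)$, and applying Cauchy-Schwartz peels off the factor $\norm{\Op(K)}{\mB(L^2(\R^q))}$. The remaining $L^2$-norm is then reassembled into $\norm{L}{(k_1, k_2)}$ by decomposing $\vp_1 = \sum_{i_1} \phi_1^{i_1}$ (with centers $|w_1^{i_1}|_1 \leq b_1 |z_1|_1$, as in Lemma~\ref{lemma pk vp1 vp2 term 1}) and invoking duality, using $|w_1^{i_1} z_1^{-1}|_1 \sim |z_1|_1$ and $|z_2^{m_2, i_2} z_2^{-1}|_2 \sim |z_2|_2$. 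Inserting the scaling factors $2^{m_2(Q_2 + \deg \ap_2)} \cdot 2^{-m_2(Q_2 + \deg \ap_2)}$ and bounding the geometric sum in $m_2$ by its largest summand produces the first term $\norm{\Op(K)}{\mB(L^2)} \norm{L}{(k_1, k_2)}$.

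Case (b) is the main obstacle: $K$ is localized away from its singularity in $\R^{q_2}$ at scale $2^{m_2}$, while $(\vp_1 \otimes \g_2^{m_2, i_2}) L$ retains singular behavior only along $\{t_1 = 0\}$, and the aim is to extract the two distinct one-parameter seminorms $\norm{K}{(0, k_2)}^{\{2\}}$ and $\norm{L}{(k_1, 0)}^{\{1\}}$ out of a single two-parameter pairing. My plan is, after routing $X^{\ap_2}$ across $\Op(K)$ and $X^{\ap_1}$ onto the $L$-side by associativity, to take the adjoint of $\Op(K)$, introduce a coupling bump $\wt{\g}_2^{m_2, i_2}$ on $\R^{q_2}$ with $\g_2^{m_2, i_2} \prec \wt{\g}_2^{m_2, i_2}$, and apply Cauchy-Schwartz in $L^2(\R^q)$. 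This splits the pairing into a $K$-factor of the form $\norm{(1 \otimes \wt{\g}_2^{m_2, i_2}) \wt{K} * X^{\ap_2}(\phi_1 \otimes \phi_2) f}{L^2(\R^q)}$ and an $L$-factor involving $(\vp_1 \otimes \g_2^{m_2, i_2}) X^{\ap_1} L * (\g_1 \otimes \g_2) g$. Reinterpreting the $\R^{q_1}$-component of $\phi_1 \otimes \phi_2 f$ on the $K$-side, and the factor $\g_2^{m_2, i_2} g$ on the $L$-side, as elements of the $L^2(\R^q)$-vectors permitted by the mixed seminorms, and invoking duality, the $K$-factor is controlled by $\norm{K}{(0, k_2)}^{\{2\}} \cdot 2^{-m_2(Q_2 + \deg \ap_2)}$ while the $L$-factor (after a further decomposition $\vp_1 = \sum_{i_1} \phi_1^{i_1}$) is controlled by $\norm{L}{(k_1, 0)}^{\{1\}} \cdot |z_1|_1^{-(Q_1 + \deg \ap_1)}$. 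Multiplying against the outer scaling $|z_1|_1^{Q_1 + \deg \ap_1} |z_2|_2^{Q_2 + \deg \ap_2}$ and summing the geometric series over $m_2$ with $2^{m_2} \geq b_2|z_2|_2$ then yields $\norm{K}{(0, k_2)}^{\{2\}} \norm{L}{(k_1, 0)}^{\{1\}}$. The delicate point throughout is routing the derivatives $X^{\ap_1}, X^{\ap_2}$ so that each seminorm receives the full homogeneous order attached to its own factor, and ensuring via the coupling bump $\wt{\g}_2^{m_2, i_2}$ that the Cauchy-Schwartz split preserves the matching scaling $\sim 2^{m_2}$ on both sides.
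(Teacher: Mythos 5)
Your proposal follows the paper's proof essentially step for step: the same dyadic decomposition of $1-\vp_2$, the same case split at $2^{m_2} \sim b_2|z_2|_2$, the same use of the adjoint of $\Op(K)$ and Cauchy--Schwartz to isolate $\norm{\Op(K)}{\mB(L^2)}\norm{L}{(k_1,k_2)}$ in the near range and $\norm{K}{(0,k_2)}^{\{2\}}\norm{L}{(k_1,0)}^{\{1\}}$ in the far range, and the same geometric summation in $m_2$. The only cosmetic deviation is the ``coupling bump'' $\wt{\g}_2^{m_2,i_2}$ inserted to split the middle cutoff between the two Cauchy--Schwartz factors; the paper instead moves $\g_2^{m_2,i_2}$ entirely to the $K$-side (which suffices since $\norm{L}{(k_1,0)}^{\{1\}}$ only requires a localizing bump in $\R^{q_1}$), and both choices lead to the same estimate.
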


\begin{lemma}\label{lemma term 3}
Let $(k_1, k_2) \in \Z_{\geq 0}^2$. Suppose $K, L \in \mP^{(k_1, k_2)}$. Then, we have
    \begin{align*}
            \sum_{|\ap| \leq (k_1, k_2)}  \sup_{\substack{\norm{f}{L^2(\R^q)}  =1;\\
            \norm{g}{L^2(\R^q)} =1 }} \sup_{j,l \in \Z^2 }  \sup_{\substack{z\in \R^q; \\ |z_1|_1 \gtrsim  2^{j_1 \vee l_1}; \\ |z_2|_2 \gtrsim  2^{j_2 \vee l_2} }} |\la \phi_1 \otimes \phi_2 f, X^{\ap} K*(1-\vp_1) \otimes \vp_2 L *\g_1 \otimes \g_2 g \ra| \\
            \times  |z_1|_1^{Q_1+ \deg \ap_1}  |z_2|_2^{Q_2+ \deg \ap_2}\\
            \lesssim \norm{\Op(K)}{\mB(L^2(\R^q))} \norm{L}{(k_1, k_2)} + \norm{K}{(k_1, 0)}^{ \{1\} } \norm{L}{(0, k_2)}^{ \{2\} }.
\end{align*}
\end{lemma}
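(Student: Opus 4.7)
The argument follows by symmetry with Lemma~\ref{lemma vp1 1-vp2 term 2}, swapping the roles of $\R^{q_1}$ and $\R^{q_2}$. Decompose $1 - \vp_1$ dyadically as in \eqref{eq decomp 1-vp1}, writing $1-\vp_1 = \sum_{m_1 \geq j_1} \sum_{i \in \fI} \g_1^{m_1, i}$ with $\supp \g_1^{m_1, i} \subseteq B^{q_1}(z_1^{m_1, i}, 2^{m_1})$ and $|z_1^{m_1, i}|_1 \sim 2^{m_1}$. As in Lemma~\ref{lemma pk 2nu k1}, split the sum into the two subcases $2^{j_1} \leq 2^{m_1} \leq b_1 |z_1|_1$ (``close'') and $2^{m_1} \geq b_1|z_1|_1$ (``far''), with $b_1 \in (0,1)$ chosen as in Remark~\ref{remark b1} so that $C_1 b_1 \leq 1/2$.

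In the close subcase, Remark~\ref{remark pk distances} guarantees $|z_1^{m_1, i} z_1^{-1}|_1 \sim |z_1|_1$, so $L$ is separated from its singularity in \emph{both} factors (in $\R^{q_1}$ via this geometric comparison, in $\R^{q_2}$ because $\supp \vp_2$ and $\supp \g_2$ are well separated at scale $|z_2|_2$). I would commute the left-invariant $X^{\ap}$ past $\Op(K)$, take adjoints of $\Op(K)$, and apply Cauchy--Schwarz to peel off $\norm{\Op(K)}{\mB(L^2)}$. After expanding $\vp_2 = \sum_{i_2 \in \fI_2} \phi_2^{i_2}$ as in the proof of Lemma~\ref{lemma pk vp1 vp2 term 1} to produce a bump of scale $2^{j_2}$ centered at $w_2^{i_2}$ with $|w_2^{i_2} z_2^{-1}|_2 \sim |z_2|_2$, the residual $L^2$-norm is recognized by duality as a matrix element contributing to $\norm{L}{(k_1, k_2)}$. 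The sum in $m_1$ is handled as in the close-case argument at the end of the proof of Lemma~\ref{lemma pk 2nu k1}, by multiplying and dividing by $|z_1^{m_1, i} z_1^{-1}|_1^{Q_1 + \deg \ap_1}$ so that the weights on the left-hand side are recovered from those in the seminorm.

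In the far subcase, by contrast, $K$ is separated from its $\R^{q_1}$-singularity at scale $|z_1^{m_1, i} z_1^{-1}|_1 \sim 2^{m_1}$, while $L$ retains its $\R^{q_2}$-singularity since $\vp_2$ still encloses the origin. Taking adjoints of $\Op(K)$ and of $X^{\ap_2}$, then applying Cauchy--Schwarz, produces the pairing $\norm{\g_1^{m_1, i}\otimes \vp_2 \, \wt K * X^{\ap_1} \phi_1 \otimes \phi_2 f}{L^2} \cdot \norm{X^{\ap_2} L * \g_1 \otimes \g_2 g}{L^2}$. By duality the first factor is controlled by $\norm{K}{(k_1, 0)}^{\{1\}}$ (localization only in $\R^{q_1}$), and the second by $\norm{L}{(0, k_2)}^{\{2\}}$ (localization only in $\R^{q_2}$). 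The sum over $m_1$ now converges geometrically because $|z_1^{m_1, i} z_1^{-1}|_1 \sim 2^{m_1}$ yields a factor $\sum_{m_1 \geq \log_2(b_1|z_1|_1)} 2^{-m_1(Q_1 + \deg \ap_1)} \lesssim |z_1|_1^{-(Q_1 + \deg \ap_1)}$, which cancels exactly against the weight $|z_1|_1^{Q_1 + \deg \ap_1}$ on the left.

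The main obstacle will be the bookkeeping in the close subcase: one must verify that the support conditions of Definition~\ref{def mP seminorms} in \emph{both} factor spaces are met simultaneously (in particular $|z_1^{m_1, i} z_1^{-1}|_1 \gtrsim 2^{m_1 \vee l_1}$, which uses $|z_1|_1 \gtrsim 2^{j_1 \vee l_1}$ together with $m_1 \leq \log_2(b_1 |z_1|_1)$), and that the weights $|z_1|_1^{Q_1 + \deg \ap_1} |z_2|_2^{Q_2 + \deg \ap_2}$ on the left are recovered, up to bounded distortion from Remarks~\ref{remark b1} and~\ref{remark pk distances}, as $|z_1^{m_1, i} z_1^{-1}|_1^{Q_1 + \deg \ap_1} |w_2^{i_2} z_2^{-1}|_2^{Q_2 + \deg \ap_2}$ inside the seminorm $\norm{L}{(k_1, k_2)}$. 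Everything else is a symmetric copy of the argument for Lemma~\ref{lemma vp1 1-vp2 term 2}.
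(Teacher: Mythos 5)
Your proposal is correct and follows the paper's own approach: the paper proves Lemma~\ref{lemma term 3} simply by invoking symmetry with Lemma~\ref{lemma vp1 1-vp2 term 2}, and you have spelled that symmetry out faithfully (dyadic decomposition of $1-\vp_1$, close/far dichotomy, Cauchy--Schwarz after taking adjoints, geometric sum cancelling the $|z_1|_1$-weight). The only blemishes are minor bookkeeping slips in the far subcase --- the relevant distance for the $K$-seminorm is $|z_1^{m_1,i}|_1$ (from the center of $\phi_1$ at the origin to $z_1^{m_1,i}$), not $|z_1^{m_1,i}z_1^{-1}|_1$, and the bump $\wt{\vp}_2$ produced by Leibniz belongs on the $L$-side of the Cauchy--Schwarz pairing rather than the $K$-side --- neither of which affects the validity of the argument.
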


By symmetry, the proofs of Lemma \ref{lemma vp1 1-vp2 term 2} and Lemma \ref{lemma term 3} are nearly identical. We will therefore only detail the proof of the estimate appearing in Lemma \ref{lemma vp1 1-vp2 term 2}.

\begin{proof}[Proof of Lemma \ref{lemma vp1 1-vp2 term 2}]
Observe that the product kernel $L$ is localized away from $t_1 =0$. After commuting the \textit{left-invariant} vector fields $X^{\ap_1}$ with the \textit{right-invariant} operator $\Op(K)$, we are left to consider
\begin{align*}
    &\sum_{|\ap| \leq (k_1, k_2)}  \sup_{\substack{\norm{f}{L^2(\R^q)}  =1;\\
    \norm{g}{L^2(\R^q)} =1 }} \sup_{j,l \in \Z^2 }  \sup_{\substack{z\in \R^q; \\ |z_1|_1 \gtrsim  2^{j_1 \vee l_1}; \\ |z_2|_2 \gtrsim  2^{j_2 \vee l_2} }} |\la \phi_1 \otimes \phi_2 f, X^{\ap_2} K* \wt{\vp}_1 \otimes (1-\vp_2) X^{\ap_1}( L *\g_1 \otimes \g_2 g )\ra| \\
    & \hspace{4in}\times  |z_1|_1^{Q_1+ \deg \ap_1}  |z_2|_2^{Q_2+ \deg \ap_2},
\end{align*}
    where $\wt{\vp}_1= X^{\ap_1} \vp$, for some multi-index $\ap_1 \in \N^{q_1}$. 
    As in \eqref{eq decomp 1-vp1}, we decompose $1-\vp_2$ into a sum of compactly supported test functions:
    \begin{equation}\label{eq decomp 1-vp2}
        1-\vp_2 = \sum_{m_2 \geq j_2} \sum_{i_2 \in \fI_2} \g_2^{m_2, i_2},
    \end{equation}
    where $\# \fI_2 \sim 1$, $\supp \g_2^{m_2, i_2} \subseteq B^{q_2}(z_2^{m_2, i_2}, 2^{m_2})$ and $|z_1^{m_2, i_2}|_2 \sim 2^{m_2}$. As in Lemma \ref{lemma pk 2nu k1}, there are two cases to be dealt with separately:
    \begin{enumerate}
        \item $2^{m_2} \leq b_2|z_2|_2$, and

        \item $b_2|z_2|_2  \leq 2^{m_2}$,
    \end{enumerate}
    where $b_2 \in (0, 1)$ is chosen\footnote{We take $b_2$ small enough so that $C_2 b_2 \leq 1/2$, where $C_2 \geq 1$ is the constant appearing in the ``almost'' triangle inequality satisfied by the homogeneous norm $|\cdot|_2$ on $\R^{q_2}$. } as in Remark \ref{remark b1}. In the first case, the distance between $\supp \g_2^{m_2, i_2}$ and $\supp \g_2$ is comparable to the distance between $\supp \phi_2$ and $\supp \g_2$; that is, $|z_2^{m_2, i_2} z_2^{-1}|_2 \sim  |z_2|_2$. In the second case, the distance between $\supp \g_2^{m_2, i_2}$ and $\supp \phi_2$ is either comparable or larger than the distance between $\supp \g_2$ and $\supp \phi_2$. 

    By substituting \eqref{eq decomp 1-vp2} into the previous equation, it suffices to estimate 
\begin{equation}\label{eq 2}
    \begin{split}
        & \sum_{|\ap| \leq (k_1, k_2)}  \sup_{\substack{\norm{f}{L^2}  =1;\\
        \norm{g}{L^2} =1 }} \sup_{j,l \in \Z^2 }  \sup_{\substack{z\in \R^q; \\ |z_1|_1 \gtrsim  2^{j_1 \vee l_1}; \\ |z_2|_2 \gtrsim  2^{j_2 \vee l_2} }} \sum_{\substack{m_2 \geq j_2; \\ 2^{m_2} \leq b_2|z_2|_2}}  \\
        &\hspace{1in} \times \sup_{i_2 \in \fI_2} |\la \phi_1 \otimes \phi_2 f, X^{\ap_2} K* \wt{\vp}_1 \otimes  \g_2^{m_2, i_2} X^{\ap_1} L *\g_1 \otimes \g_2 g \ra|   |z_1|_1^{Q_1+ \deg \ap_1}  |z_2|_2^{Q_2+ \deg \ap_2}\\
        &+\sum_{|\ap| \leq (k_1, k_2)}  \sup_{\substack{\norm{f}{L^2}  =1;\\
        \norm{g}{L^2} =1 }} \sup_{j,l \in \Z^2 }  \sup_{\substack{z\in \R^q; \\ |z_1|_1 \gtrsim  2^{j_1 \vee l_1}; \\ |z_2|_2 \gtrsim  2^{j_2 \vee l_2} }} \sum_{\substack{m_2 \geq j_2; \\ 2^{m_2} \geq b_2|z_2|_2}}  \\
        & \hspace{1in} \times \sup_{i_2 \in \fI_2} |\la \phi_1 \otimes \phi_2 f, X^{\ap_2} K* \wt{\vp}_1 \otimes  \g_2^{m_2, i_2} X^{\ap_1} L *\g_1 \otimes \g_2 g \ra| |z_1|_1^{Q_1+ \deg \ap_1}  |z_2|_2^{Q_2+ \deg \ap_2}. 
        \end{split}
    \end{equation}

To bound the first term in \eqref{eq 2}, notice that $L$ is localized away from its singularity on the ``cross'' and can thus be identified with a smooth function on $\R^{q_1} \times \R^{q_2}$. This case is thus similar to that of Lemma \ref{lemma pk vp1 vp2 term 1}. As in the proof of Lemma \ref{lemma pk vp1 vp2 term 1}, we successively commute the \textit{left-invariant} differential operator $X^{\ap_2}$ and the \textit{right-invariant} operator $\Op(K)$, take the adjoint of $\Op(K)$, apply the Cauchy-Schwartz inequality, introduce appropriate scaling factors, and at last apply Hölder's inequality to the sum over $m_2$ so that the first term in \eqref{eq 2} is
\begin{align*}
    &\lesssim \norm{\Op(K)}{\mB(L^2(\R^q))}\\
    & \times \sum_{|\ap| \leq (k_1, k_2)}  \sup_{\substack{
    \norm{g}{L^2(\R^q)} =1 }} \sup_{j,l \in \Z^2 }  \sup_{\substack{z\in \R^q; \\ |z_1|_1 \gtrsim  2^{j_1 \vee l_1}; \\ |z_2|_2 \gtrsim  2^{j_2 \vee l_2} }} \sup_{\substack{m_2 \geq j_2; \\ 2^{m_2} \leq b_2 |z_2|_2}} \sup_{i_2 \in \fI_2} \norm{ \wt{\vp}_1 \otimes    \wt{\g_2}^{m_2, i_2} X^{\ap_1}X^{\ap_2} L *\g_1 \otimes \g_2 g}{L^2(\R^q)} \\
    &\hspace{1in} \times  |z_1|_1^{Q_1+ \deg \ap_1}  |z_2|_2^{Q_2+ \deg \ap_2}  |z_2^{m_2, i_2} z_2^{-1}|_2^{Q_2+ \deg \ap_2} \sum_{\substack{m_2 \geq j_2; \\ 2^{m_2} \leq b_2 |z_2|_2}}  |z_2^{m_2, i_2} z_2^{-1}|_2^{-(Q_2+ \deg \ap_2)} .
\end{align*}
where $\norm{\wt{\vp}_1}{L^{\infty}}, \norm{\wt{\g}_2^{m_2, i_2}}{L^{\infty}} \lesssim 1$ since $\wt{\vp}_1 =X^{\beta_1} \vp_1$ and $\wt{\g}_2^{m_2, i_2} = X^{\beta_2} \g_2^{m_2, i_2}$ up to some finite order $|\beta_1| \leq k_1$ and $|\beta_2| \leq k_2$. The geometric series is bounded above by a scalar factor of its largest term. As such, the equation above is
\begin{align*}
    &\lesssim \norm{\Op(K)}{\mB(L^2(\R^q))}\\
    & \times \sum_{|\ap| \leq (k_1, k_2)}  \sup_{\substack{
    \norm{g}{L^2(\R^q)} =1 }} \sup_{\substack{(j_1, m_2) \in \Z^2; \\ (l_1, l_2) \in \Z^2 } }  \sup_{\substack{z\in \R^q; \\ |z_1|_1 \gtrsim  2^{j_1 \vee l_1}; \\ |z_2^{m_2, i_2} z_2^{-1}|_2 \gtrsim  2^{m_2 \vee l_2} }} \sup_{i_2 \in \fI_2} \norm{ \vp_1 \otimes    \g_2^{m_2, i_2} X^{\ap_1}X^{\ap_2} L *\g_1 \otimes \g_2 g}{L^2(\R^q)} \\
    &\hspace{1in} \times  |z_1|_1^{Q_1+ \deg \ap_1}   |z_2^{m_2, i_2} z_2^{-1}|_2^{Q_2+ \deg \ap_2},
\end{align*}
By duality, we recover $\norm{L}{(k_1, k_2)}$.

To complete the proof of Lemma \ref{lemma vp1 1-vp2 term 2}, it remains to bound the second term in \eqref{eq 2}. Recall that in this case, $K$ is localized away from $t_2 = 0$ and $L$ is localized away from $t_1 =0$; that is, neither distribution can be identified with a smooth function on the whole space. After taking adjoints of $X^{\ap_2}$ and $\Op(K)$, we apply the Cauchy-Schwartz inequality to separate the two product kernels. The second term in \eqref{eq 2} is thus
\begin{align*}
    \lesssim \sum_{|\ap| \leq (k_1, k_2)}  \sup_{\substack{\norm{f}{L^2(\R^q)}  =1;\\
    \norm{g}{L^2(\R^q)} =1 }} \sup_{j,l \in \Z^2 }  \sup_{\substack{z\in \R^q; \\ |z_1|_1 \gtrsim  2^{j_1 \vee l_1}; \\ |z_2|_2 \gtrsim  2^{j_2 \vee l_2} }} \sum_{\substack{m_2 \geq j_2; \\ 2^{m_2} \geq b_2 |z_2|_2}} \sup_{i_2 \in \fI_2} \norm{\g_2^{m_2, i_2} \wt{K}*  X^{\ap_2} \phi_1 \otimes \phi_2 f}{L^2(\R^q)} \\
    \times  |z_2|_2^{Q_2+ \deg \ap_2} \norm{\wt{\vp}_1   X^{\ap_1} L *\g_1 \otimes \g_2 g}{L^2(\R^q)}  |z_1|_1^{Q_1+ \deg \ap_1}.
\end{align*} 
As in the proof of Lemma \ref{lemma vp1 1-vp2 term 2}, we bound the term associated to the product kernel $L$ by $\norm{L}{(k_1, 0)}^{\{1\}}$. In addition, after introducing relevant scaling factors and applying Hölder's inequality to the sum in $m_2$, the equation above is
\begin{align*}
    \lesssim \sum_{|\ap| \leq (k_1, k_2)}  \sup_{\substack{\norm{f}{L^2(\R^q)}  =1 }} \sup_{j_2 , l_2 \in \Z }  \sup_{\substack{z_2 \in \R^{q_2}; \\  |z_2|_2 \gtrsim  2^{j_2 \vee l_2} }} \sup_{\substack{m_2 \geq j_2; \\ 2^{m_2} \geq b_2 |z_2|_2}} \sup_{i_2 \in \fI_2} \norm{\g_2^{m_2, i_2} \wt{K}*  X^{\ap_2} \phi_1 \otimes \phi_2 f}{L^2(\R^q)} 2^{m_2(Q_2+ \deg \ap_2)} \\
    \times   \sum_{\substack{m_2 \geq j_2; \\ 2^{m_2} \geq b_2 |z_2|_2}}2^{-m_2(Q_2+ \deg \ap_2)} |z_2|_2^{Q_2+ \deg \ap_2} \norm{L}{(k_1, 0)}^{\{1\}}.
\end{align*}
The geometric sum is bounded above by a scalar multiple of its largest term. Moreover, recall that $\supp \g_2^{m_2, i_2} \subseteq B^{q_2} (z_2^{m_2, i_2}, 2^{m_2})$ where $|z_2^{m_2, i_2}|_2 \sim 2^{m_2}$ As such, the equation above is
\begin{align*}
    &\lesssim \sum_{|\ap| \leq (k_1, k_2)}  \sup_{\substack{\norm{f}{L^2(\R^q)}  =1 }} \sup_{j_2 , l_2 \in \Z }  \sup_{\substack{z_2\in \R^{q_2} ;  \\ |z_2|_2 \gtrsim  2^{j_2 \vee l_2} }} \sup_{\substack{m_2 \geq j_2; \\ 2^{m_2} \geq b_2 |z_2|_2}} \sup_{i_2 \in \fI_2} \norm{\g_2^{m_2, i_2} \wt{K}*  X^{\ap_2} \phi_1 \otimes \phi_2 f}{L^2(\R^q)} |z_2^{m_2, i_2}|_2^{Q_2+ \deg \ap_2}  \\
    &\hspace{1in} \times  |z_2|^{-(Q_2+ \deg \ap_2)} |z_2|_2^{Q_2+ \deg \ap_2} \norm{L}{(k_1, 0)}^{\{1\}}. 
\end{align*}
By duality, the previous equation is
\begin{align*}
    &\lesssim \sum_{|\ap| \leq (k_1, k_2)}  \sup_{\substack{\norm{f}{L^2(\R^q)}  =1;\\
    \norm{g}{L^2(\R^q)} =1 }} \sup_{\substack{j_2, m_2 \in \Z } } \sup_{\substack{ z_2^{m_2, i_2} \in \R^{q_2};  \\ |z_2^{m_2, i_2}|_2 \gtrsim  2^{m_2 \vee j_2} }}  \sup_{i_2 \in \fI_2} | \la      \phi_2 f, X^{\ap_2} K* \g_2^{m_2, i_2}g \ra| |z_2^{m_2,, i_2}|_2^{(Q_2+ \deg \ap_2)} \\
    &\hspace{1in} \times  \norm{L}{(k_1, 0)}^{\{1\}},
\end{align*}
where we recognize $\norm{K}{(0, k_2)}^{\{2\}}$. Thus concluding the proof of Lemma \ref{lemma vp1 1-vp2 term 2}. 
\end{proof}

Finally, we estimate the fourth term in \eqref{eq pk vp1 vp2} with the following lemma.
\begin{lemma}\label{lemma pk 1_vp1 1_vp2 term 4}
Let $(k_1, k_2) \in \Z_{\geq 0}^2$. Suppose $K, L \in \mP^{(k_1, k_2)}$. Then, we have
    \begin{align*}
         \sum_{|\ap| \leq (k_1, k_2)}  \sup_{\substack{\norm{f}{L^2(\R^q)}  =1;\\
        \norm{g}{L^2(\R^q)} =1 }} \sup_{j,l \in \Z^2 }  \sup_{\substack{z\in \R^q; \\ |z_1|_1 \gtrsim  2^{j_1 \vee l_1}; \\ |z_2|_2 \gtrsim  2^{j_2 \vee l_2} }} |\la \phi_1 \otimes \phi_2 f, X^{\ap} K*(1-\vp_1) \otimes (1-\vp_2) L *\g_1 \otimes \g_2 g \ra| \\
         \times  |z_1|_1^{Q_1+ \deg \ap_1}  |z_2|_2^{Q_2+ \deg \ap_2}\\
        \lesssim \norm{\Op(K)}{\mB(L^2(\R^q))} \norm{L}{(k_1, k_2)} +\norm{K}{(0,k_2)}^{ \{2\} } \norm{L}{(k_1, 0)}^{ \{1\} } \\ 
        +\norm{K}{(k_1, 0)}^{ \{1\} } \norm{L}{(0, k_2)}^{ \{2\} } +\norm{K}{(k_1, k_2)} \norm{\Op(L)}{\mB(L^2(\R^q))}.
    \end{align*}
\end{lemma}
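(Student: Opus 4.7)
The plan is to mimic the single-parameter argument of Lemma \ref{lemma pk 2nu k1} simultaneously in both factor spaces. As in \eqref{eq decomp 1-vp1} and \eqref{eq decomp 1-vp2}, I first dyadically decompose
\begin{equation*}
    (1-\vp_1)\otimes(1-\vp_2) = \sum_{\substack{m_1 \geq j_1 \\ m_2 \geq j_2}} \sum_{\substack{i_1 \in \fI_1 \\ i_2 \in \fI_2}} \g_1^{m_1, i_1} \otimes \g_2^{m_2, i_2},
\end{equation*}
where $\supp \g_\mu^{m_\mu, i_\mu} \subseteq B^{q_\mu}(z_\mu^{m_\mu, i_\mu}, 2^{m_\mu})$ and $|z_\mu^{m_\mu, i_\mu}|_\mu \sim 2^{m_\mu}$. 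I then split the resulting quadruple sum into four regions according to whether $2^{m_\mu} \leq b_\mu |z_\mu|_\mu$ or $2^{m_\mu} \geq b_\mu |z_\mu|_\mu$ for $\mu = 1,2$, with $b_1, b_2 \in (0,1)$ chosen as in Remark \ref{remark b1}. By Remark \ref{remark pk distances} applied in each factor, the regime $2^{m_\mu} \leq b_\mu |z_\mu|_\mu$ yields $|z_\mu^{m_\mu,i_\mu} z_\mu^{-1}|_\mu \sim |z_\mu|_\mu$, while the regime $2^{m_\mu} \geq b_\mu |z_\mu|_\mu$ yields $|z_\mu^{m_\mu,i_\mu}|_\mu \sim 2^{m_\mu} \gtrsim |z_\mu|_\mu$.

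Each of the four regions corresponds to one summand on the right-hand side. In the ``small--small'' region, both $\g_1^{m_1,i_1} \otimes \g_2^{m_2, i_2} L$ factors are localized close to the $\g$-cutoffs of $L$, so $L$ is effectively localized around a single point and one argues as in Lemma \ref{lemma pk vp1 vp2 term 1}: commute $X^\ap$ to the right of $\Op(K)$ using \eqref{eq X associativity of convo}, take the adjoint of $\Op(K)$, apply Cauchy--Schwarz, and recover $\norm{\Op(K)}{\mB(L^2)} \norm{L}{(k_1, k_2)}$. In the ``large--large'' region, neither $K$ nor $L$ can be identified with a smooth function on either axis; after taking adjoints of both $X^\ap$ and $\Op(K)^*$ and applying Cauchy--Schwarz to separate $K$ and $L$, one introduces the scaling factors $2^{m_1(Q_1+\deg\ap_1)} 2^{m_2(Q_2+\deg\ap_2)}$ and their reciprocals, bounds each geometric sum by its largest term using $|z_\mu^{m_\mu, i_\mu}|_\mu \gtrsim |z_\mu|_\mu$, and recognizes $\norm{K}{(k_1,k_2)} \norm{\Op(L)}{\mB(L^2)}$ on each side via duality.

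The ``small--large'' and ``large--small'' mixed regions are handled by combining the above two strategies parameter-by-parameter, exactly as in the proof of Lemma \ref{lemma vp1 1-vp2 term 2} for the single mixed case. For instance, in the ``small--large'' region, the $m_1$-decomposition behaves like Lemma \ref{lemma pk vp1 vp2 term 1} in the first factor (giving a single-factor seminorm of $L$ in $\{1\}$), while the $m_2$-decomposition forces $K$ to see its singularity only in the second factor (giving $\norm{K}{(0,k_2)}^{\{2\}}$), producing the summand $\norm{K}{(0,k_2)}^{\{2\}} \norm{L}{(k_1,0)}^{\{1\}}$. The ``large--small'' region symmetrically produces $\norm{K}{(k_1,0)}^{\{1\}} \norm{L}{(0,k_2)}^{\{2\}}$.

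The main obstacle is bookkeeping: in each region one must choose the order of operations carefully so that the \textit{left}-invariant $X^{\ap}$ can be moved past the \textit{right}-invariant $\Op(K)$ exactly when useful (see \eqref{eq X associativity of convo}), and so that the $m_1$- and $m_2$-geometric sums remain separately summable after introducing scaling factors, which requires using $b_\mu$ chosen as in Remark \ref{remark b1} so that the exponents have definite sign. Once this accounting is in place, the same dyadic-sum-equals-largest-term estimate used in Lemma \ref{lemma pk 2nu k1} and Lemma \ref{lemma vp1 1-vp2 term 2} applies in each of the four regions, yielding the four summands on the right-hand side.
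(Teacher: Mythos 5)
Your proposal mirrors the paper's proof essentially step for step: the same dyadic annular decomposition of $(1-\vp_1)\otimes(1-\vp_2)$, the same fourfold split according to $2^{m_\mu} \lessgtr b_\mu|z_\mu|_\mu$, the same assignment of each region to one summand on the right-hand side, and the same toolkit (commuting $X^\ap$ past $\Op(K)$, adjoints, Cauchy--Schwarz, scaling factors, bounding the geometric sums by their largest term, duality). The correspondence between regions and seminorms — small--small $\to \norm{\Op(K)}{\mB(L^2)}\norm{L}{(k_1,k_2)}$, large--large $\to \norm{K}{(k_1,k_2)}\norm{\Op(L)}{\mB(L^2)}$, and the two mixed regions $\to$ the cross-terms — matches the paper exactly, so this is the same proof at the same level of detail the paper itself gives.
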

\begin{proof}[Proof of Lemma \ref{lemma pk 1_vp1 1_vp2 term 4}]
To avoid redundancy, we will hereafter only highlight the modifications necessary to adapt the proof of the previous lemmas. We decompose $(1-\vp_1)$ and $(1-\vp_2)$ as in \eqref{eq decomp 1-vp1} and \eqref{eq decomp 1-vp2} respectively. 
For $\mu = 1, 2$,
\begin{enumerate}
    \item $2^{m_{\mu}} \leq b_{\mu} |z_{\mu}|_{\mu}$, and

    \item $b_{\mu}|z_{\mu}|_{\mu} \leq 2^{m_{\mu}}$,
\end{enumerate}
where $b_1, b_2$ are chosen as in Remark \ref{remark b1}. Breaking up the sum accordingly,
\begin{align*}
    & \sum_{|\ap| \leq (k_1, k_2)}  \sup_{\substack{\norm{f}{L^2(\R^q)}  =1;\\
    \norm{g}{L^2(\R^q)} =1 }} \sup_{j,l \in \Z^2 }  \sup_{\substack{z\in \R^q; \\ |z_1|_1 \gtrsim  2^{j_1 \vee l_1}; \\ |z_2|_2 \gtrsim  2^{j_2 \vee l_2} }} |\la \phi_1 \otimes \phi_2 f, X^{\ap} K*(1-\vp_1) \otimes (1-\vp_2) L *\g_1 \otimes \g_2 g \ra| \\
    & \hspace{3in} \times  |z_1|_1^{Q_1+ \deg \ap_1}  |z_2|_2^{Q_2+ \deg \ap_2}\\
    &\leq \sum_{|\ap| \leq (k_1, k_2)}  \sup_{\substack{\norm{f}{L^2(\R^q)}  =1;\\ \norm{g}{L^2(\R^q)} =1 }} \sup_{j,l \in \Z^2 }  \sup_{\substack{z\in \R^q; \\ |z_1|_1 \gtrsim  2^{j_1 \vee l_1}; \\ |z_2|_2 \gtrsim  2^{j_2 \vee l_2} }}\sum_{\substack{m_1 \geq j_1; \\ 2^{m_1} \leq b_1 |z_1|_1 }}  \sum_{\substack{m_2 \geq j_2; \\ 2^{m_2} \leq b_2 |z_2|_2}}\sup_{(i_1, i_2) \in \fI_1 \times I_2} \\
    &\hspace{1in} \times  |\la \phi_1 \otimes \phi_2 f, X^{\ap} K* \g_1^{ m_1, i_1} \otimes  \g_2^{m_2, i_2} L *\g_1 \otimes \g_2 g \ra|  |z_1|_1^{Q_1+ \deg \ap_1}  |z_2|_2^{Q_2+ \deg \ap_2}\\
    &+\sum_{|\ap| \leq (k_1, k_2)}  \sup_{\substack{\norm{f}{L^2(\R^q)}  =1;\\ \norm{g}{L^2(\R^q)} =1 }} \sup_{j,l \in \Z^2 }  \sup_{\substack{z\in \R^q; \\ |z_1|_1 \gtrsim  2^{j_1 \vee l_1}; \\ |z_2|_2 \gtrsim  2^{j_2 \vee l_2} }}\sum_{\substack{m_1 \geq j_1; \\ 2^{m_1} \leq b_1 |z_1|_1 }}  \sum_{\substack{m_2 \geq j_2; \\ 2^{m_2} \geq b_2 |z_2|_2}}\sup_{(i_1, i_2) \in \fI_1 \times I_2} \\
    &\hspace{1in} \times  |\la \phi_1 \otimes \phi_2 f, X^{\ap} K* \g_1^{ m_1, i_1} \otimes  \g_2^{m_2, i_2} L *\g_1 \otimes \g_2 g \ra|  |z_1|_1^{Q_1+ \deg \ap_1}  |z_2|_2^{Q_2+ \deg \ap_2}\\
    &+ \sum_{|\ap| \leq (k_1, k_2)}  \sup_{\substack{\norm{f}{L^2(\R^q)}  =1;\\ \norm{g}{L^2(\R^q)} =1 }} \sup_{j,l \in \Z^2 }  \sup_{\substack{z\in \R^q; \\ |z_1|_1 \gtrsim  2^{j_1 \vee l_1}; \\ |z_2|_2 \gtrsim  2^{j_2 \vee l_2} }}\sum_{\substack{m_1 \geq j_1; \\ 2^{m_1} \geq  b_1 |z_1|_1 }}  \sum_{\substack{m_2 \geq j_2; \\ 2^{m_2} \leq b_2 |z_2|_2}}\sup_{(i_1, i_2) \in \fI_1 \times I_2} \\
    &\hspace{1in} \times  |\la \phi_1 \otimes \phi_2 f, X^{\ap} K* \g_1^{ m_1, i_1} \otimes  \g_2^{m_2, i_2} L *\g_1 \otimes \g_2 g \ra|  |z_1|_1^{Q_1+ \deg \ap_1}  |z_2|_2^{Q_2+ \deg \ap_2}\\
    &+\sum_{|\ap| \leq (k_1, k_2)}  \sup_{\substack{\norm{f}{L^2(\R^q)}  =1;\\ \norm{g}{L^2(\R^q)} =1 }} \sup_{j,l \in \Z^2 }  \sup_{\substack{z\in \R^q; \\ |z_1|_1 \gtrsim  2^{j_1 \vee l_1}; \\ |z_2|_2 \gtrsim  2^{j_2 \vee l_2} }}\sum_{\substack{m_1 \geq j_1; \\ 2^{m_1} \geq  b_1 |z_1|_1 }}  \sum_{\substack{m_2 \geq j_2; \\ 2^{m_2} \geq b_2 |z_2|_2}}\sup_{(i_1, i_2) \in \fI_1 \times I_2} \\
    &\hspace{1in} \times  |\la \phi_1 \otimes \phi_2 f, X^{\ap} K* \g_1^{ m_1, i_1} \otimes  \g_2^{m_2, i_2} L *\g_1 \otimes \g_2 g \ra|  |z_1|_1^{Q_1+ \deg \ap_1}  |z_2|_2^{Q_2+ \deg \ap_2}.
\end{align*}

Recall the two cases to consider. 
\begin{enumerate}
    \item If $2^{m_1} \leq b_1|z_1|_1$, then the test functions $\g_1^{m_1, i_1}$ are close to $\phi_1$. As such $L$ is localized away from its singularity restricted to $\R^{q_1}$ and $|z_1^{m_1, i_1}z_1^{-1}|_1 \sim |z_1|_1$.

    \item If $2^{m_1} \geq b_1|z_1|_1$, then the test functions $\g_1^{m_1, i_1}$ are far from $\phi_1$. As such, $K$ is localized localized away from its singularity restricted to $\R^{q_1}$ and $|z_1^{m_1, i_1}|_1 \sim 2^{m_1}$. 
\end{enumerate}
By symmetry, an analogous observation holds on the second factor space $\R^{q_2}$. We follow a similar methodology as before. For each sum, we successively:
\begin{itemize}
    \item apply the differential operators $X^{\ap_1}$ and $X^{\ap_2}$ to the product kernel $K$ or $L$ localized away from $t_1 =0$ and $t_2 =0$ respectively;

    \item after taking adjoints of $\Op(K)$, we apply the Cauchy-Schwartz inequality to separate the two product kernels;

    \item in view of simultaneously obtaining a convergent sum in $m_1$ and $m_2$, and recovering seminorms of $K$ and $L$, we introduce relevant scaling factors;

    \item finally, after applying Hölder's inequality to the sums in $m_1$ and $m_2$, we obtain a tame algebra estimate. 
\end{itemize}
The four terms above are thus bounded above by 
\begin{align*}
    &\norm{\Op(K)}{\mB(L^2(\R^q))} \norm{L}{(k_1, k_2)}, \ \norm{K}{(0,k_2)}^{ \{2\} } \norm{L}{(k_1, 0)}^{ \{1\} }, \norm{K}{(k_1, 0)}^{ \{1\} } \norm{L}{(0, k_2)}^{ \{2\} }, \textrm{and }\norm{K}{(k_1, k_2)} \norm{\Op(L)}{\mB(L^2(\R^q))}
\end{align*}
respectively. 
\end{proof}

\vspace{.1in}

\section{Flag kernels}

\subsection{A Fréchet topology on flag kernels} \ 

We first record the definition of flag kernels commonly found in the literature\footnote{See for instance, Definition 2.3 in \cite{NRSW12} and Definition 4.2.1 in \cite{Str14} which can be readily adapted to our ``product setting''. }.

\begin{definition}\label{def FK}
We define the locally convex topological vector space of flag kernels $K$ on $\R^q = \R^{q_1} \times \cdots \times \R^{q_{\nu}}$ recursively on $\nu$. For $\nu =0$, it is defined to be $\C$ with the usual topology. For $\nu >0$, it is the space of all distributions $K \in C^{\infty}_c(\R^q)'$ such that
\begin{enumerate}
    \item (Growth condition) $K$ is $C^{\infty}$ away from $t_1 =0$ and for every multi-index $\ap = (\ap_1, \ldots, \ap_{\nu}) \in \N^{q_1} \times \cdots  \times \N^{q_{\nu}}$, there is a constant $C_{\ap}$ s.t.
\begin{equation}\label{GC FK}
|\p_{t_1}^{\ap_1} \cdots \p_{t_{\nu}}^{\ap_{\nu}} K(t)| \leq C_{\ap} \prod_{\mu=1}^{\nu} \lp |t_1|_1 + \ldots +|t_{\mu}|_{\mu} \rp^{-Q_{\mu} - \deg \ap_{\mu}}.
\end{equation}
We define the least possible $C_{\ap}$ to be a seminorm.

    \item (Cancellation condition) For any $\mu= 1, \ldots, \nu$, any bounded set $\mB_{\mu} \subseteq C^{\infty}_c(\R^{q_{\mu}})$,  $R_{\mu}>0$, and any $\vp_{\mu} \in \mB_{\mu}$, we assume that the distribution
    \begin{align*}
        K_{\vp_{\mu},R_{\mu}} \in C^{\infty}_0(\cdots \times \R^{q_{\mu-1}} \times \R^{q_{\mu+1}} \times \cdots )'
    \end{align*}
    defined by
    \begin{equation}\label{CC2 FK}
    K_{\vp_{\mu}, R_{\mu}} (\ldots, t_{\mu -1},   t_{\mu+1}, \ldots) := \int K(t)\vp_{\mu}(R_{\mu} \cdot  t_{\mu})dt_{\mu}
\end{equation}
is a flag kernel on the $(\nu-1)-$factor space $ \cdots \times \R^{q_{\mu-1}} \times \R^{q_{\mu+1}} \times \cdots $ and for any continuous semi-norm $\norm{\cdot}{}$ on the locally convex space of flag kernels on $ \cdots \times \R^{q_{\mu-1}} \times \R^{q_{\mu+1}} \times \cdots $, we define a seminorm on the space of flag kernels on $\R^{q_1} \times \cdots \times \R^{q_{\nu}}$ by $\norm{K}{}:= \sup_{\substack{\vp_{\mu} \in \mB_{\mu}; \\ R_{\mu}> 0}} \norm{K_{\vp_{\mu},R_{\mu} }}{} $, which we assume to be finite. 
\end{enumerate}
We give the space of flag kernels the coarsest topology such that all the above semi-norms are continuous. 
\end{definition}

\begin{remark}
    \cite{K16} used representation theory in the particular setting of the Heisenberg group to prove an inversion theorem for flag kernels. We instead proved an inversion theorem for flag kernels defined on a direct product of graded Lie groups using \textit{multi-parameter a priori estimates} and various PDE tools in \cite{Stoko23} instead of representation theory. 
\end{remark}

\begin{remark}
Müller, Ricci, and Stein first introduced flag kernels in their study of spectral multipliers on Heisenberg-type groups in \cite{MRS95}. Nagel, Ricci, and Stein later investigated flag kernels on both a direct product of homogeneous nilpotent groups $G=G_1 \times \cdots \times G_{\nu}$ and on a single homogeneous nilpotent group $G$ in \cite{NRS01}. Nagel, Ricci, Stein, and Wainger then proved in \cite{NRSW12} that operators $Tf= f*K$, where $K$ is a flag kernel on a homogeneous nilpotent Lie group $G$, form an algebra under composition and proved the $L^p$-boundedness of such operators for $1< p < \infty$. \cite{Glo10} and \cite{Glo13} proved similar results independently. Other recent results on flags, flag singular integral operators, flag kernels and the associated Hardy spaces and weighted norm inequalities include \cite{Yang_Dachun09}, \cite{DLM10}, \cite{WL_Besov_12}, \cite{LuZhu13_besov}, \cite{SteinYung13_subalgebra_flag}, \cite{Wu_flag_weight_14}, \cite{HLW19}, \cite{Duong_Ji_Ou_Pipher_wick19}, \cite{Glo19_related_flags}, \cite{Han_Li_wick_22_flag_hardy}, and the references therein. 
\end{remark}

\begin{definition}\label{def fk new seminorm}
For $\vk\in \Z_{\geq 0}^{\nu}$, let $\mF^{\vk} (\R^{q_1} \times \cdots \times \R^{q_{\nu}})$ denote the space of distributions $K \in C^{\infty}_c(\R^{q})'$ for which the following seminorms are finite:
\begin{equation}\label{eq new seminorm FK}
    \begin{split}
        \brac{K}{\vk} &:= \norm{K}{\vk} + \sum_{\mu=1}^{\nu}  \brac{K}{(k_{\mu}, \ldots, k_{\nu})}^{ \{\mu\} },
        \end{split}
\end{equation}
where
\begin{equation}
    \begin{split}
         \brac{K}{(k_{\mu}, \ldots, k_{\nu})}^{\{\mu\} }& :=  \sum_{|(\ap_{\mu}, \ldots, \ap_{\nu})| \leq (k_{\mu}, \ldots, k_{\nu})} \sup_{\substack{j_{\mu}, l_{\mu} \in \Z }} \sup_{\substack{w_{\mu}, z_{\mu} \in \R^{q_{\mu}}; \\ |w_{\mu}z_{\mu}^{-1}|_{\mu} \geq 3 C 2^{j_{\mu} \vee l_{\mu}};}} \Big| \Big\la  \phi_{\mu} f, X^{(\ap_{\mu}, \ldots, \ap_{\nu})} K *  \g_{\mu} g  \Big\ra \Big| \\
        & \times \prod_{\mu \leq \mub \leq \nu} |w_{\mu} z_{\mu}^{-1}|_{\mu}^{Q_{\mub} + \deg \ap_{\mub}},
        \end{split}
\end{equation}
where in turn $\phi_{\mu}, \g_{\mu} \in C^{\infty}_c(\R^{q_{\mu}})$ are such that $0\leq \phi_{\mu}, \g_{\mu} \leq 1$, $\supp \phi_{\mu} \subseteq B^{q_{\mu}}(w_{\mu}, 2^{j_{\mu}})$, $\supp \g_{\mu} \subseteq B^{q_{\mu}}(z_{\mu}, 2^{l_{\mu}})$, and where $C \geq 1$ depends only on the quasi-norm \footnote{The constant $C$ depends on the homogeneous quasi-norm. It corresponds to the constant in the ``almost'' triangle inequality $|xy^{-1}|_{\mu} \leq C(|x_{\mu}|_{\mu} + |y_{\mu}|_{\mu})$.}
$|\cdot|_{\mu}$ on $\R^{q_{\mu}}$, for $\mu =1, \ldots, \nu$.

\end{definition}

\begin{remark}
    For example, in the $\nu =2$ parameter case, for every $\vk = (k_1, k_2) \in \Z^2_{\geq 0}$, we have
\begin{align*}
    \brac{K}{(k_1, k_2)} &= \norm{K}{(k_1, k_2)}\\
    & + \sum_{|\ap | \leq (k_1, k_2) } \sup_{j_1, l_1 \in \Z } \sup_{\substack{w_1, z_1 \in R^{q_1}; \\ |w_1z_1^{-1}|_1 \gtrsim 2^{j_1 \vee l_1}}} \lf \la \phi_1 f, X^{(\ap_1, \ap_2)} K * \g_1 g \ra \rf \\
    & \hspace{2in} \times |w_1z_1^{-1}|_1^{Q_1 + \deg \ap_1 } |w_1z_1^{-1}|_1^{Q_2 + \deg \ap_2 } \\
    & + \sum_{|\ap_2| \leq k_2} \sup_{j_2, l_2 \in \Z} \sup_{\substack{w_2, z_2 \in \R^{q_2}; \\ |w_2z_2^{-1}|_2 \gtrsim 2^{j_2 \vee l_2}}} \lf \la \phi_2 f, X^{\ap_2} K * \g_2 g \ra \rf \\
    & \hspace{3in} \times |w_2z_2^{-1}|_2^{Q_2 + \deg \ap_2 }.
\end{align*}
\end{remark}

\begin{remark}\label{remark fks subalg of pks}
    When defined on a direct product of graded Lie groups $G = G_1 \times \cdots \times G_{\nu}$, flag kernels form a subalgebra of product kernels. As such, observe that
    \begin{align*}
        \norm{K}{\vk} \lesssim \brac{K}{\vk}. 
    \end{align*}
    To avoid redundancy, we will henceforth only highlight the necessary modifications needed to adapt the proof of our results on product kernels to the setting of flag kernels.
\end{remark}

\begin{proposition}\label{prop equiv top fk}
    The seminorms defined in Definition \ref{def FK} and in Definition \ref{def fk new seminorm} induce equivalent topologies on the space of flag kernels. 
\end{proposition}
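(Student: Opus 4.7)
The proof will parallel that of Proposition \ref{prop pk equiv topologies}, treating the flag kernel seminorms as augmentations of the product kernel seminorms with additional ``directional'' control encoded in $\brac{K}{(k_\mu, \ldots, k_\nu)}^{\{\mu\}}$. By Remark \ref{remark fks subalg of pks} every flag kernel is a product kernel and $\norm{K}{\vk} \lesssim \brac{K}{\vk}$, so Proposition \ref{prop pk equiv topologies} already handles the part of $\brac{K}{\vk}$ equal to $\norm{K}{\vk}$. Only the flag-specific terms $\brac{K}{(k_\mu, \ldots, k_\nu)}^{\{\mu\}}$ need fresh arguments, and these arguments differ from the product kernel case primarily in bookkeeping: the exponent $\prod_{\mu \leq \mub \leq \nu} |w_\mu z_\mu^{-1}|_\mu^{Q_{\mub} + \deg \ap_{\mub}}$ is exactly what is needed to capture the extra singularity prescribed by \eqref{GC FK} in the region where $|t_\mu|_\mu$ dominates $|t_{\mub}|_{\mub}$ for $\mub > \mu$.

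For the direction (Def.~\ref{def FK}) $\Rightarrow$ (Def.~\ref{def fk new seminorm}), the plan is to adapt Lemma \ref{lemma pk l2 bddness} to the flag setting. Using a flag-adapted Littlewood--Paley decomposition $K = \sum_{n \in \Z^\nu} \vp_n^{(2^n)}$, in which each bump $\vp_n$ carries the appropriate cancellation along the flag (as in the decomposition theory of Street), one fixes $\mu$ and shows that the operators
\[
T_n^{\{\mu\}} := \Big(\prod_{\mub \geq \mu} 2^{M_\mu(Q_{\mub} + \deg \ap_{\mub})} 2^{n_\mu \deg \ap_{\mub}}\Big)\, \phi_\mu \, \Op\!\big(X^{(\ap_\mu, \ldots, \ap_\nu)} \vp_n^{(2^n)}\big) \, \g_\mu
\]
satisfy Cotlar--Stein estimates with summable off-diagonal decay uniformly in the geometric data. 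The calculation mirrors that of Claim \ref{claim TjTk}, the only change being that the flag exponent $\prod_{\mub \geq \mu}(Q_{\mub} + \deg \ap_{\mub})$ replaces the product-kernel exponent at a single index; integrating out the factor spaces $\R^{q_{\mub}}$ for $\mub \neq \mu$ is what absorbs the extra $Q_{\mub}$ factors and produces $L^1$-normalized dilations as in \eqref{eq pk decay unif}. Summing in $n$ gives an $L^2$-bound for the localized operator, which by duality controls $\brac{K}{(k_\mu, \ldots, k_\nu)}^{\{\mu\}}$.

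For the reverse direction (Def.~\ref{def fk new seminorm}) $\Rightarrow$ (Def.~\ref{def FK}), the product-kernel content $\norm{K}{\vk}$ of $\brac{K}{\vk}$, together with Proposition \ref{prop pk equiv topologies}, already yields the product-kernel growth and cancellation conditions. To upgrade this to the sharper flag growth \eqref{GC FK}, the plan is to subdivide $\R^{q_1} \times \cdots \times \R^{q_\nu} \setminus \{t_1 = 0\}$ into the finitely many regions indexed by which $|t_\mu|_\mu$ dominates $|t_{\mub}|_{\mub}$ for $\mub > \mu$; on each region the factor $\prod_{\mub \geq \mu}(|t_1|_1 + \cdots + |t_{\mub}|_{\mub})^{-(Q_{\mub} + \deg \ap_{\mub})}$ collapses to $|t_\mu|_\mu^{-\sum_{\mub \geq \mu}(Q_{\mub} + \deg \ap_{\mub})}$, which is exactly the exponent carried by $\brac{K}{(k_\mu, \ldots, k_\nu)}^{\{\mu\}}$. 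On each such region one runs the Sobolev embedding and polynomial vector-field argument of \eqref{eq pk step 1 gc}--\eqref{eq pk equiv top polys}, but now localizing only in the dominant factor $\R^{q_\mu}$ and letting the others be integrated out freely. The scaling case $|t_\mu|_\mu < 1$ is handled by conjugating with dilations on the $\mu$-th factor only, exactly as in the second case of Proposition \ref{prop pk equiv topologies}. The flag cancellation condition is then verified by induction on $\nu$: integrating $K$ against a bounded family $\vp_\mu(R_\mu t_\mu)$ produces a distribution on $(\nu-1)$ factors, whose product-kernel seminorms are bounded by $\norm{K}{\vk}$ via the argument at the end of Proposition \ref{prop pk equiv topologies}, and whose flag-adapted seminorms $\brac{\cdot}{(k_{\mu'}, \ldots, k_\nu)}^{\{\mu'\}}$ for $\mu' \neq \mu$ are inherited directly from the corresponding seminorms of $K$, since the flag localization is in a factor not being integrated.

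The main obstacle is the region-by-region breakdown in the flag growth step: one must ensure that the left-invariant vector field expansion \eqref{eq dx lin comb of X} and the dilation conjugation interact cleanly with the \emph{single-factor} localization (on $\R^{q_\mu}$ only) without producing parasitic factors on the non-dominant axes, and that the exponents $Q_{\mub} + \deg \ap_{\mub}$ for $\mub \geq \mu$ recombine into $\brac{K}{(k_\mu, \ldots, k_\nu)}^{\{\mu\}}$ rather than into a weaker full-index seminorm. Once this bookkeeping is carried out in the two extreme cases $|t_\mub|_\mub \geq 1$ and $|t_\mub|_\mub < 1$ for $\mub \geq \mu$, the remaining intermediate cases and the induction step for cancellation follow by straightforward modification of the product kernel proof.
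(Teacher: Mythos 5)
Your proposal takes essentially the same approach as the paper: the forward direction is exactly the Cotlar--Stein argument carried out in the paper's Lemma \ref{lemma fk L2 bdd} (flag-adapted dyadic decomposition with cancellation along $S(n)$, and the product of exponents $\prod_{\mub \geq \mu}(Q_{\mub}+\deg\ap_{\mub})$ attached to the single localization index $\mu$), and the reverse direction reduces to the product-kernel case on the ``increasing'' region $|t_1|_1 \lesssim \cdots \lesssim |t_\nu|_\nu$ and elsewhere localizes in the dominant factor and reruns the Sobolev-embedding/vector-field/dilation argument from Proposition \ref{prop pk equiv topologies}. The only stylistic difference is that the paper treats in detail just the extremal case $|t_1|_1 \gtrsim \cdots \gtrsim |t_\nu|_\nu$, using $\brac{K}{\vk}^{\{1\}}$, and dispatches the remaining orderings as ``straightforward modifications,'' whereas you propose organizing them region by region with $\brac{K}{(k_\mu,\ldots,k_\nu)}^{\{\mu\}}$; this is the same bookkeeping, unpacked.
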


To prove Proposition \ref{prop equiv top fk}, we first need to record a technical lemma.

\begin{lemma}\label{lemma fk L2 bdd}
        Let $K \in C^{\infty}_c(\R^q)'$ be a flag kernel. Suppose $\supp \phi_{\mu} \subseteq B^{q_{\mu}}(w_{\mu}, 2^{j_{\mu}})$, $\supp \g_{\mu}\subseteq B^{q_{\mu}}(z_{\mu}, 2^{l_{\mu}})$, and $j_{\mu}, l_{\mu} \in \Z$, for $\mu = 1, \ldots, \nu$, then the operators
        \begin{align*}
            \Big( \prod_{\mu \leq \mub \leq \nu }  |w_{\mu} z_{\mu}^{-1}|_{\mu}^{Q_{\mub} + \deg \ap_{\mub} } \Big)  \phi_{\mu} \Op(X^{(\ap_{\mu}, \ldots, \ap_{\nu})} K)  \g_{\mu}
        \end{align*}
        are $L^2(\R^q)$-bounded with operator norm uniformly bounded in $w_{\mu}, z_{\mu}\in \R^{q_{\mu}}$, where $|w_{\mu}z_{\mu}^{-1}|_{\mu} \gtrsim 2^{j_{\mu} \vee l_{\mu}}$. 
    \end{lemma}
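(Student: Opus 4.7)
The proof will closely parallel that of Lemma \ref{lemma pk l2 bddness}, replacing the product-kernel Littlewood--Paley decomposition with its flag-adapted counterpart and adapting the Cotlar--Stein argument accordingly. I would first invoke a flag-kernel reproducing formula $K = \sum_{n \in \Z^\nu} \vp_n^{(2^n)}$, where $\{\vp_n : n \in \Z^\nu\}$ is a bounded family of Schwartz functions whose vanishing moments in the $\bar\mu$-th variable are guaranteed only when the index $n$ lies in the appropriate flag-ordered region (this is the flag analog of Corollary 5.2.16 in \cite{Str14} and is consistent with the nested growth structure \eqref{GC FK}). After applying $X^{(\ap_\mu, \ldots, \ap_\nu)}$, commuting with the dilations to extract a factor $\prod_{\mu \leq \bar\mu \leq \nu} 2^{n_{\bar\mu} \deg \ap_{\bar\mu}}$, and absorbing the flag weight $\prod_{\mu \leq \bar\mu \leq \nu} 2^{M_\mu (Q_{\bar\mu} + \deg \ap_{\bar\mu})}$ with $2^{M_\mu} \sim |w_\mu z_\mu^{-1}|_\mu$, I would define pieces $T_n$ in direct analogy with those appearing in \eqref{eq Tj def}.

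The heart of the argument is to verify the four Cotlar--Stein decay estimates
\begin{equation*}
\norm{T_n T_m^*}{\mB(L^1(\R^q))},\ \norm{T_n^* T_m}{\mB(L^1(\R^q))},\ \norm{T_n T_m^*}{\mB(L^\infty(\R^q))},\ \norm{T_n^* T_m}{\mB(L^\infty(\R^q))} \lesssim 2^{-|n-m|}
\end{equation*}
uniformly in $M_\mu$, and then to interpolate and apply Cotlar--Stein to bound $\sum_n T_n$ on $L^2$. In every direction $\bar\mu$ in which $\vp_n$ carries vanishing moments, one writes $\vp_n = \sum_{|\beta|=1} X^\beta \zeta_{n,\beta}$ and transfers the derivative onto $\vp_m$ by self-adjointness of the homogeneous vector fields, exactly as in Claim \ref{claim TjTk}, producing the exponential gain $2^{-|n_{\bar\mu} - m_{\bar\mu}|}$ in the cancellation directions.

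The main obstacle is the asymmetry special to the flag setting: the seminorm localizes only in the single factor $\R^{q_\mu}$ via $\phi_\mu, \g_\mu$, whereas the flag weight $\prod_{\mu \leq \bar\mu \leq \nu} |w_\mu z_\mu^{-1}|_\mu^{Q_{\bar\mu} + \deg \ap_{\bar\mu}}$ involves the dimensions and degrees of all factors $\bar\mu \geq \mu$. In the product-kernel argument the unlocalized factors $\bar\mu \notin S$ integrated out for free against $L^1$-normalized Schwartz tails; here, for the unlocalized factors $\bar\mu > \mu$, one must exploit the flag growth relation governed by $|t_1|_1 + \ldots + |t_{\bar\mu}|_{\bar\mu}$, so that integrals over $\R^{q_{\bar\mu}}$ against Schwartz tails at scales $\max\{n_{\bar\mu}, m_{\bar\mu}\}$ remain compatible with the weight $2^{M_\mu(Q_{\bar\mu} + \deg \ap_{\bar\mu})}$ in the $\mu$-th norm rather than the $\bar\mu$-th one. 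This will force a delicate case analysis on the sign of $M_\mu$ and on the relative orderings of $n_{\bar\mu}, m_{\bar\mu}, M_\mu$, paralleling but extending the final estimate near \eqref{eq pk decay unif}; once this compatibility is secured, the choice of Schwartz decay exponent $t_{\bar\mu}$ large enough recovers the required $2^{-|n-m|}$ decay and the Cotlar--Stein conclusion follows.
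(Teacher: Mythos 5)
Your proposal follows the same overall strategy as the paper: a flag-adapted Littlewood–Paley decomposition, the same $T_n$ pieces, a Cotlar–Stein scheme, and the same interpolation. You also correctly identify one of the two genuine modifications from the product-kernel case, namely reconciling the flag weight $\prod_{\mu \leq \bar\mu \leq \nu} |w_\mu z_\mu^{-1}|_\mu^{Q_{\bar\mu}+\deg\ap_{\bar\mu}}$ (which aggregates dimensions and degrees across factors $\bar\mu \geq \mu$) with localization in the single factor $\R^{q_\mu}$.

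However, there is a gap in the almost-orthogonality step. You state the Cotlar–Stein estimate as $\norm{T_nT_m^*} \lesssim 2^{-|n-m|} = \prod_\mu 2^{-|n_\mu-m_\mu|}$, but your justification produces decay only ``in the cancellation directions.'' In the flag decomposition the sum runs over the ordered cone $n_1 \geq \cdots \geq n_\nu$, and the pieces $\vp_n$ carry vanishing moments only in the variables $\mu \in S(n) = \{\mu : \mu=\nu \text{ or } n_\mu > n_{\mu+1}\}$. For an index $\mu \notin S(n)$ (i.e.\ $n_\mu = n_{\mu+1}$) you cannot integrate by parts in the $\mu$-th variable at all, so the one-derivative trick you invoke does not directly yield the factor $2^{-|n_\mu-m_\mu|}$ in that direction, and it is precisely this missing gain that makes $\sum_n T_n$ fail to converge absolutely. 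The paper fills this in with a specific mechanism: when $\mu \notin S(n)$ one pulls out a \emph{second-order} derivative in the $(\mu+1)$-th variable, obtaining $2^{-2|n_{\mu+1}-m_{\mu+1}|}$, and then uses the ordering $n_1 \geq \cdots \geq n_\nu$ together with a case split on whether $\mu \in S(m)$ to redistribute this double gain as $2^{-|n_\mu-m_\mu|}2^{-|n_{\mu+1}-m_{\mu+1}|}$ (or a further refinement when $m_\mu > m_{\mu+1}$). Without this redistribution argument you have asserted the conclusion of the almost-orthogonality claim but not proved it, so the Cotlar–Stein application is unjustified. The remainder of your outline (the weight-compatibility case analysis on the sign of $M_\mu$ and on the relative sizes of $n_{\bar\mu}, m_{\bar\mu}, M_\mu$) matches the paper's second modification and would close the argument once the decay issue above is repaired.
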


    \begin{proof}[Proof of Lemma \ref{lemma fk L2 bdd}]
        By Lemma 4.2.24 in \cite{Str14}, there exists\footnote{see Corollary 2.4.4 in \cite{NRS01} for an analogous formulation.} a bounded set $\{\vp_n; n \in \Z^{\nu}\} \subseteq \mS(\R^q)$ s.t. 
        \begin{align*}
            K = \sum_{\substack{n \in \Z^{\nu}; \\ n_1 \geq \cdots \geq n_{\nu}}} \vp_n^{(2^n)},
        \end{align*}
        where $\zeta_n \in \mS_0^{S(n)} (\R^q)$ where $S(n) = \{ \mu ; \mu = \nu \ \textrm{ or }n_{\mu} > n_{\mu +1}\}$\footnote{For example, for $\nu =2$, we have $\{\zeta_n; \  n \in \Z^2, \ n_1 = n_2\}\subseteq \mS_0^{\{2\}}$ is bounded, and on the other hand, $\{\zeta_n; \  n \in \Z^2, \ j_1 >j_2\} \subseteq \mS_0^{\{1,2\}}$ is bounded}. We can thus decompose the operator of interest as follows:
        \begin{align*}
            \sum_{\substack{n \in \Z^{\nu}; \\ n_1 \geq \cdots \geq n_{\nu}}} \Big( \prod_{\mu \leq \mub \leq \nu}  |w_{\mu} z_{\mu}^{-1}|_{\mu}^{Q_{\mub} + \deg \ap_{\mub} } \Big)  \phi_{\mu} \Op(X^{(\ap_{\mu}, \ldots, \ap_{\nu})}  \vp_n^{(2^n)})  \g_{\mu}.
        \end{align*}
        Applying the differential operators and noting that $|w_{\mu} z_{\mu}^{-1}|_{\mu} \sim 2^{N_{\mu}}$ for some $N_{\mu} \geq 0$, the previous equation is
        \begin{align*}
            = \sum_{\substack{n \in \Z^{\nu}; \\ n_1 \geq \cdots \geq n_{\nu}}} \Big( \prod_{\mu \leq \mub \leq \nu} 2^{n_{\mub} \cdot \deg \ap_{\mub}} 2^{N_{\mu}(Q_{\mub} + \deg \ap_{\mub}) } \Big)  \phi_{\mu} \Op(\zeta_n^{(2^n)})  \g_{\mu},
        \end{align*}
        where $\zeta_n = X^{(\ap_{\mu}, \ldots, \ap_{\nu})}\vp_n$ and $\{\zeta_n; n \in \Z^{\nu}\} \subseteq \mS$ is bounded with $\zeta_n \in \mS_0^{S(n)}$. For every $n \in \Z^{\nu}$, with $n_1 \geq \ldots \geq n_{\nu}$, let
\begin{equation}\label{eq fk Tk def}
    \begin{split}
        T_n :=  \Big( \prod_{\mu \leq \mub \leq \nu} 2^{n_{\mub} \cdot \deg \ap_{\mub}} 2^{N_{\mu}(Q_{\mub} + \deg \ap_{\mub}) } \Big)  \phi_{\mu} \Op(\zeta_n^{(2^n)})  \g_{\mu}.
    \end{split}
\end{equation}
As with distributions\footnote{Compare \eqref{eq fk Tk def} to \eqref{eq Tj def} associated to distributions in $\mF^{\vk}$ and $\mP^{\vk}$ respectively.} in $\mP^{\vk}$, to bound the $L^2$ operator norm of $\sum T_n$ uniformly in $N_{\mu}$, we use Cotlar-Stein's lemma. We need to prove the following claim adapted to distributions in $\mF^{\vk}$.

\begin{claim}\label{claim fk TjTk}
        For all $j, k \in \Z^{\nu}$, with $n_1 \geq \cdots \geq n_{\nu}$ and $m_1 \geq \cdots \geq m_{\nu}$, we have
    \begin{equation}
    \begin{split}
        \norm{T_nT_n^*}{\mB(L^1(\R^q))} & \lesssim  \prod_{1 \leq \mu \leq \nu} 2^{-|n_{\mu}-m_{\mu}|}, \\         
        \norm{T_n^*T_m}{\mB(L^{1}(\R^q))} &\lesssim  \prod_{1 \leq \mu \leq \nu} 2^{-|n_{\mu}-m_{\mu}|},\\
        \norm{T_nT_m^*}{\mB(L^{\infty}(\R^q))} & \lesssim  \prod_{1 \leq \mu \leq \nu} 2^{-|n_{\mu}-m_{\mu}|}, \\
        \norm{T_n^*T_m}{\mB(L^{\infty}(\R^q))} &\lesssim  \prod_{1 \leq \mu \leq \nu} 2^{-|n_{\mu}-m_{\mu}|}, 
    \end{split}
    \end{equation}
    where $T_n, T_m$ are defined in \eqref{eq fk Tk def}. 
\end{claim}

The proof of the claim reduces to bounding 
\begin{equation}
    \sup_{y \in \R^q } \int_{\R^q} \lf \int_{\R^q} T_n(x,u) T_m^*(u,y) du \rf dx.
\end{equation}

By \eqref{eq fk Tk def}, we have an explicit formula for the Schwartz kernels. The proof is similar to that of claim \ref{claim TjTk}. To avoid redundancy, we will only highlight the modifications needed to adapt the proof of claim \ref{claim TjTk} to that of claim \ref{claim fk TjTk}. 

\begin{enumerate}
    \item The first modification to be made has to do with the exponential decay. As before, without loss of generality, we may assume that $n_{\mu} \geq m_{\mu}$ for all $\mu =1, \ldots, \nu$. We first need to ``pull derivatives'' out of $T_n$. Since $\{\zeta_n; n \in \Z^{\nu}\} \subseteq \mS_0^{S(n)}$ is bounded, we can only ``pull out derivatives'' in certain variables $x_{\mu}$ where $\mu \in S(n)$:
\begin{align*}
    \zeta_n = \sum_{|\beta^{S(n)}| = 1} X^{\beta^{S(n)}} \zeta_{n, \beta}.
\end{align*}

• For $\mu =\nu$, $\nu \in S(n)$ so we can ``pull out derivatives'' in the corresponding variable and write:
\begin{align*}
    \zeta_n = \sum_{|\ap_{\nu}|= 1} X^{\ap_{\nu}} \zeta_{n, \ap_{\nu}}.
\end{align*}
Recall $n_{\nu} \geq m_{\nu}$. Hence, after integrating by parts, we obtain an exponential decay: $2^{-|n_{\nu} - m_{\nu}|}$. 

• For $\mu > \nu$, there are two cases to consider separately. 
\begin{enumerate}
    \item If $\mu \in S(n)$, then $n_{\mu} > n_{\mu+1}$, after pulling out derivatives and integrating by parts, we obtain an exponential decay $2^{-|n_{\mu} - m_{\mu}|}$.

    \item If $\mu \notin S(n)$, or equivalently, $n_{\mu} = n_{\mu+1}$, without loss of a generality we can assume that $\mu+1 \in S(n)$. After pulling out derivatives of order $2$ in $\R^{q_{\mu +1}}$ and integrating by parts, we obtain an exponential decay of the form $2^{-2|n_{\mu+1} - m_{\mu+1}|}$. To obtain a decay in terms of $n_{\mu}$ and $m_{\mu}$, we further have two cases to consider here:
    \begin{enumerate}
        \item either $\mu \notin S(m)$ in which case, $m_{\mu} = m_{\mu+1}$ and we have
\begin{align*}
    & 2^{-2|n_{\mu+1} - m_{\mu+1}|} = 2^{-|n_{\mu+1} - m_{\mu+1}|} 2^{-|n_{\mu} - m_{\mu}|},
\end{align*}

        \item or $\mu \in S(m)$ in which case $n_{\mu} = n_{\mu+1} \geq m_{\mu}> m_{\mu+1}$ and we have
\begin{align*}
    & 2^{-2|n_{\mu+1} - m_{\mu+1}|} = 2^{-|n_{\mu} - m_{\mu}|} 2^{-|m_{\mu} - m_{\mu+1}|} 2^{-|n_{\mu+1} - m_{\mu+1}|}.
\end{align*}
    \end{enumerate}
\end{enumerate}

We thus obtain the following summable decay: 
\begin{align*}
    2^{-C(n, m)} := \prod_{\mu \in S(n)} 2^{-|n_{\mu}- m_{\mu}|} \prod_{\substack{\mu \notin S(n); \\ \mu \notin S(m)}} 2^{-|n_{\mu} - m_{\mu}|} \prod_{\substack{\mu \notin S(n); \\ \mu \in S(m)}} 2^{-|n_{\mu}- m_{\mu}|} 2^{-|m_{\mu}- m_{\mu+1}|}. 
\end{align*}

    \item The second modification has to do with the sum of dyadic factors in the equation below. Indeed, \eqref{eq pk decay unif} is replaced by
\begin{equation}
    \begin{split}
        & 2^{-C(n, m)} \sup_{\substack{y_{\mu} \in \supp \phi_{\mu}}} \prod_{\mu \leq \mub \leq \nu} 2^{(n_{\mub}+m_{\mub}) \deg \ap_{\mub}} 2^{2N_{\mu}(Q_{\mub} + \deg \ap_{\mub}) }   \int_{\R^q} \int_{\R^q}  \phi_{\mu}(x_{\mu}) \wt{\g}_{\mu} (u_{\mu})^2 \\
        &\times \Big(  \prod_{\mu' =1}^{\nu} 2^{n_{\mu'} Q_{\mu'}} (1+2^{n_{\mu'}}|x_{\mu'}u_{\mu'}^{-1}|_{\mu'})^{-t_{\mu'}} 2^{m_{\mu'}  Q_{\mu'}} (1+2^{m_{\mu'}} |y_{\mu'}u_{\mu'}^{-1}|_{\mu'})^{-t_{\mu'}} \Big) du dx.
    \end{split}
\end{equation}
Observe that $|x_{\mu}u_{\mu}^{-1}|_{\mu}, |y_{\mu}u_{\mu}^{-1}|_{\mu} \sim 2^{N_{\mu}}$ for some $N_{\mu}$. We bound the $L^1(\R^{q_{\mu}})$ norm by its $L^{\infty}(\R^{q_{\mu}})$ norm multiplied by the size of the domain of integration. All other $L^1$ norms are uniformly bounded. Hence, the previous equation is
\begin{equation}\label{eq fk powers of 2}
\begin{split}
    & \lesssim  2^{-C(n, m)}  \prod_{\mu \leq \mub \leq \nu} 2^{(n_{\mub}+m_{\mub}) \deg \ap_{\mub}} 2^{2N_{\mu}(Q_{\mub} + \deg \ap_{\mub}) }   \\
    &\times  2^{n_{\mu} Q_{\mu}} (1+2^{n_{\mu}} 2^{N_{\mu}})^{-t_{\mu}} 2^{m_{\mu}  Q_{\mu}} (1+2^{m_{\mu}} 2^{N_{\mu}} )^{-t_{\mu}} 2^{l_{\mu} Q_{\mu}} 2^{j_{\mu} Q_{\mu}}. 
    \end{split}
\end{equation}
Observe that $N_{\mu} \gtrsim j_{\mu} \vee l_{\mu}$. As such, the previous equation is
\begin{align*}
    & \lesssim  2^{-C(n, m)}  \prod_{\mu \leq \mub \leq \nu} 2^{(n_{\mub}+m_{\mub}) \deg \ap_{\mub}}  2^{2N_{\mu} (Q_{\mub} + \deg \ap_{\mub}) }   \\
    &\times  2^{n_{\mu} Q_{\mu}} (1+2^{n_{\mu}} 2^{N_{\mu}})^{-t_{\mu}} 2^{m_{\mu}  Q_{\mu}} (1+2^{m_{\mu}} 2^{N_{\mu}} )^{-t_{\mu}}  2^{2N_{\mu} Q_{\mu}}. 
\end{align*}
It remains to show that for our given choices of $n_{\mu}, n_{\mub}, m_{\mu}$ and $m_{\mub}$ the expression above is uniformly bounded in $N_{\mu}$ for $\mu \leq \mub \leq \nu$. By taking a large enough $t_{\mu}$, we obtain a exponential decay $2^{-C(n, m)}$ as desired. 
    \end{enumerate}
    
\end{proof}

\begin{proof}[Proof of Proposition \ref{prop equiv top fk}]
    Let $K\in C^{\infty}_c(\R^q)'$ be a flag kernel as defined in Definition \ref{def FK}. We first want to show that $K \in \mF^{\vk}$ for all $\vk \in \Z_{\geq 0}^{\nu}$. Given $\vk \in \Z_{\geq 0}^{\nu}$, by Remark \ref{remark fks subalg of pks}, $\norm{K}{\vk} < \infty$. By Lemma \ref{lemma fk L2 bdd}, $\brac{K}{\vk}$ is in fact bounded for all $\vk \in \Z^{\nu}_{\geq 0}$.

For the reverse direction, let $K \in \mF^{\vk}$ for all $\vk \in \Z_{\geq 0}^{\nu}$. We want to show that $K$ is a flag kernel. It suffices to prove the growth condition \eqref{GC FK} on the complement of the set where $|t_1|_1 \lesssim \cdots \lesssim |t_{\nu}|_{\nu} $. Indeed in this case, the growth condition for flag kernels is comparable to that of product kernels and by Proposition \ref{prop pk equiv topologies}, the growth condition follows. It suffices to consider the extremal case $|t_1|_1 \gtrsim \ldots \gtrsim |t_{\nu}|_{\nu}$. The remaining cases follow from a few straightforward modifications. As with product kernels, there are two further sub-cases to consider:
\begin{enumerate}
    \item $|t_1|_1 \geq 1$, and

    \item $|t_1|_1 <1$.
\end{enumerate}
The first case follows by localizing exclusively in the first factor space $\R^{q_1}$, retracing the proof for product kernels starting at \eqref{eq pk step 1 gc} and replacing the seminorms $\norm{K}{\vk}$ with the simpler seminorms $\brac{K}{\vk}^{\{1\}}$. 

In the second case, it thus suffices to prove that for $|x_1|_1 \sim 1$ and $|x_1|_1 \gtrsim \cdots \gtrsim |x_{\nu}|_{\nu}$, we have
\begin{align*}
    \sup_{1 \geq R_1 >0} |\p^{\ap_1}_{x_1} \cdots \p^{\ap_{\nu}}_{x_{\nu}}R_1^{Q_1} \cdots R_1^{Q_{\nu}} K(R_1 x_1, \ldots, R_1 x_{\nu})| \lesssim 1. 
\end{align*}
Indeed, it follows that 
\begin{align*}
    R_1^{Q_1 + \deg \ap_1} \cdots (R_1+ R_1|x_2|_2 + \cdots + R_1 |x_{\nu}|_{\nu})^{Q_{\nu}+ \deg \ap_{\nu}} |\p^{\ap_1}_{x_1} \cdots \p^{\ap_{\nu}}_{x_{\nu}}K(R_1 x_1, \ldots, R_1 t_{\nu})| \lesssim 1.
\end{align*}
In turn, we can conclude that for $1 \geq |t_1|_1 \gtrsim \cdots \gtrsim |t_{\nu}|_{\nu}$, 
\begin{align*}
    |\p^{\ap_1}_{t_1} \cdots \p^{\ap_{\nu}}_{t_{\nu}} K(t_1, \ldots, t_{\nu})| \lesssim \prod_{\mu=1}^{\nu}(|t_1|_1 + |t_2|_2 + \cdots +|t_{\mu}|_{\mu})^{-Q_{\mu} -\deg \ap_{\mu}}.
\end{align*}
The proof follows by retracing the proof for product kernels starting at \eqref{eq 3} where we instead consider dilations $R_1 = R_2 = \ldots = R_{\nu}$. Here too, one needs to replace the seminorms $\norm{K}{\vk}$ with the simpler seminorms $\brac{K}{\vk}^{\{1\}}$. We thus refer the reader to the prior section for details. The cancellation condition also follows directly from the product kernel case with the reductions listed above. \end{proof}

\subsection{Multi-parameter tame algebra estimate for $\mF^{\vk}$} \

We record the general $\nu$-parameter tame estimate for flag kernels, for $\nu \geq 2$, in the next theorem.

\begin{theorem}[Tame estimate for flag kernels]\label{thm general nu fk tame}
Let $K, L \in \mF^{\vk}$, we then have
    \begin{equation*}
        \begin{split}
        \brac{K*L}{\vk} \lesssim & \norm{\Op(K)}{\mB(L^2(\R^q))} \brac{L}{\vk} + \brac{K}{\vk} \norm{\Op(L)}{\mB(L^2(\R^q))} + \sum_{ \substack{S \subseteq \{1, \ldots, \nu\}; \\ S \neq \emptyset} } \norm{K}{\vk^S_0}^{S}\norm{L}{\vk^{S^c}_0}^{S^c} .
        \end{split}
    \end{equation*}
\end{theorem}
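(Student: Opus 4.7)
The plan is to decompose $\brac{K*L}{\vk}$ according to Definition \ref{def fk new seminorm}:
\begin{equation*}
\brac{K*L}{\vk} = \norm{K*L}{\vk} + \sum_{\mu=1}^{\nu} \brac{K*L}{(k_{\mu}, \ldots, k_{\nu})}^{\{\mu\}}.
\end{equation*}
The first summand is bounded directly by Theorem \ref{thm pk tame general nu}, since flag kernels are a subalgebra of product kernels (Remark \ref{remark fks subalg of pks}) and $\norm{K}{\vk^S_0}^S$ already appears in the target inequality; this also absorbs the bounds $\norm{\Op(K)}{\mB(L^2)} \norm{L}{\vk} \leq \norm{\Op(K)}{\mB(L^2)} \brac{L}{\vk}$ and $\norm{K}{\vk} \norm{\Op(L)}{\mB(L^2)} \leq \brac{K}{\vk} \norm{\Op(L)}{\mB(L^2)}$. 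The remaining work is to bound each single-index flag-type seminorm $\brac{K*L}{(k_{\mu}, \ldots, k_{\nu})}^{\{\mu\}}$ by $\norm{\Op(K)}{\mB(L^2)} \brac{L}{\vk} + \brac{K}{\vk} \norm{\Op(L)}{\mB(L^2)}$.

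I would adapt the argument of Lemma \ref{lemma pk 2nu k1} to the flag setting. Fix $\mu$, and by translation invariance take $w_{\mu} = 0$. Let $\phi_{\mu} \prec \vp_{\mu} \in C^{\infty}_c(B^{q_{\mu}}(0, c\cdot 2^{j_{\mu}}))$ and split
\begin{equation*}
\la \phi_{\mu} f, X^{(\ap_{\mu}, \ldots, \ap_{\nu})} K * L * \g_{\mu} g \ra = \la \phi_{\mu} f, X^{(\ap_{\mu}, \ldots, \ap_{\nu})} K * \vp_{\mu} L * \g_{\mu} g \ra + \la \phi_{\mu} f, X^{(\ap_{\mu}, \ldots, \ap_{\nu})} K * (1-\vp_{\mu}) L * \g_{\mu} g \ra.
\end{equation*}
For the first piece, $\vp_{\mu} L$ is localized away from $\supp \g_{\mu}$ so $L$ restricted there is smooth; commute $X^{(\ap_{\mu}, \ldots, \ap_{\nu})}$ onto $\vp_{\mu} L * \g_{\mu} g$ using the associativity \eqref{eq X associativity of convo}, take the adjoint of $\Op(K)$, apply Cauchy-Schwartz to extract $\norm{\Op(K)}{\mB(L^2)}$, then expand $\vp_{\mu} = \sum_{i \in \fI} \phi_{\mu}^i$ supported in balls $B^{q_{\mu}}(w_{\mu}^i, 2^{j_{\mu}})$ with $|w_{\mu}^i|_{\mu} \leq b_{\mu}|z_{\mu}|_{\mu}$ (Remark \ref{remark b1}), so that $|w_{\mu}^i z_{\mu}^{-1}|_{\mu} \sim |z_{\mu}|_{\mu}$. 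Duality then recovers $\brac{L}{(k_{\mu}, \ldots, k_{\nu})}^{\{\mu\}} \leq \brac{L}{\vk}$, contributing $\norm{\Op(K)}{\mB(L^2)} \brac{L}{\vk}$.

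For the second piece, decompose $1 - \vp_{\mu} = \sum_{m_{\mu} \geq j_{\mu}} \sum_i \g_{\mu}^{m_{\mu}, i}$ into dyadic annular bumps with $\supp \g_{\mu}^{m_{\mu}, i} \subseteq B^{q_{\mu}}(z_{\mu}^{m_{\mu}, i}, 2^{m_{\mu}})$ and $|z_{\mu}^{m_{\mu}, i}|_{\mu} \sim 2^{m_{\mu}}$, and separate the ranges (1) $2^{m_{\mu}} \leq b_{\mu}|z_{\mu}|_{\mu}$ and (2) $2^{m_{\mu}} \geq b_{\mu}|z_{\mu}|_{\mu}$. In range (1), $L$ is localized away from its $\mu$-th singularity and $|z_{\mu}^{m_{\mu}, i} z_{\mu}^{-1}|_{\mu} \sim |z_{\mu}|_{\mu}$; commuting derivatives onto $L$, extracting $\norm{\Op(K)}{\mB(L^2)}$ by Cauchy-Schwartz, and applying Hölder to the geometric sum in $m_{\mu}$ (which converges because the total exponent $\sum_{\mub \geq \mu}(Q_{\mub} + \deg \ap_{\mub}) > 0$) produces $\norm{\Op(K)}{\mB(L^2)} \brac{L}{(k_{\mu}, \ldots, k_{\nu})}^{\{\mu\}}$. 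In range (2), $K$ is instead localized away from its $\mu$-th singularity and $|z_{\mu}^{m_{\mu}, i}|_{\mu} \sim 2^{m_{\mu}}$; symmetrically one peels off $\brac{K}{(k_{\mu}, \ldots, k_{\nu})}^{\{\mu\}} \norm{\Op(L)}{\mB(L^2)}$. The redistribution of the flag weight $|z_{\mu}|_{\mu}^{\sum_{\mub \geq \mu}(Q_{\mub} + \deg \ap_{\mub})}$ against powers of $|z_{\mu}^{m_{\mu}, i} z_{\mu}^{-1}|_{\mu}$ (respectively $|z_{\mu}^{m_{\mu}, i}|_{\mu}$) is identical to the product kernel case, only with the flag-weighted exponent in place of $Q_{\mu} + \deg \ap_{\mu}$.

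The main obstacle I anticipate is bookkeeping the \emph{extra} factor-space derivatives $X^{(\ap_{\mu+1}, \ldots, \ap_{\nu})}$, which act in factor spaces where no localization is imposed. These are \emph{left}-invariant and therefore commute with the \emph{right}-invariant convolutions $\Op(K), \Op(L)$ and with multiplication by functions depending only on $x_{\mu}$, so they can be pushed freely onto whichever factor is convenient via \eqref{eq X associativity of convo}; crucially the flag-type seminorms are defined with these high-order derivatives built in, so no loss is incurred. The general $\nu$-parameter case is then assembled by summing over $\mu \in \{1, \ldots, \nu\}$ and combining with Theorem \ref{thm pk tame general nu}; the enumeration over subsets $S \subseteq \{1, \ldots, \nu\}$ and the four-term case split of Lemma \ref{lemma pk 2nu k1 k2} extend to arbitrary $\nu$ by the same inductive cut-off procedure, producing exactly the mixed seminorms $\norm{K}{\vk^S_0}^S \norm{L}{\vk^{S^c}_0}^{S^c}$ already present on the right-hand side.
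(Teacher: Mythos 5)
Your proposal is correct and follows essentially the same route as the paper: decompose $\brac{K*L}{\vk}$ via Definition \ref{def fk new seminorm}, dispatch $\norm{K*L}{\vk}$ through Theorem \ref{thm pk tame general nu} together with $\norm{\cdot}{\vk}\lesssim\brac{\cdot}{\vk}$, and treat each single-factor flag seminorm $\brac{K*L}{(k_\mu,\ldots,k_\nu)}^{\{\mu\}}$ by the localization-and-cutoff argument of Lemma \ref{lemma pk 2nu k1} adapted to the flag weight — exactly the content of Lemma \ref{lemma fk brac 1} (for $\mu=1$) and the identification $\brac{\cdot}{(0,k_\nu)}^{\{\nu\}}=\norm{\cdot}{(0,k_\nu)}^{\{\nu\}}$ (for $\mu=\nu$), with the paper leaving the intermediate $\mu$ (present only when $\nu>2$) to straightforward modifications that you spell out explicitly.
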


Recall the $2$-parameter version recorded in Theorem \ref{thm tame FK} in the introduction. For all $(k_1, k_2) \in \N^{2}$ and $K, L \in \mF^{(k_1, k_2)}$, we have
    \begin{equation*}
        \begin{split}
        \brac{K*L}{\vk} \lesssim & \norm{\Op(K)}{\mB(L^2(\R^q))} \brac{L}{(k_1, k_2)} + \norm{K}{(k_1, 0)}^{\{1\} } \norm{L}{(0, k_2)}^{\{2\} }\\
        &+ \norm{K}{(0, k_2)}^{\{2\} } \norm{L}{(k_1, 0)}^{\{1\} } + \brac{K}{(k_1, k_2)} \norm{\Op(L)}{\mB(L^2(\R^q))}.
        \end{split}
    \end{equation*}

As with product kernels, to highlight the key ideas in the proof of the tame estimate, we will only prove the $2$-parameter case. The general $\nu$-parameter case follows from a few straightforward modifications.

\subsection{Proof of the tame algebra estimate for $\mF^{(k_1, k_2)}$} \

Let $\vk = (k_1, k_2) \in \Z_{\geq 0}^2$ and $K, L \in \mF^{\vk}$. By Definition \ref{def fk new seminorm} and Theorem \ref{thm tame estimate pk}, it suffices to bound the following terms in the definition of $\brac{K*L}{(k_1, k_2)}$:
\begin{equation}\label{eq fk tame kl step 1}
\begin{split}
    &\brac{K*L}{(k_1, k_2)}^{\{1\} } +\brac{K*L}{k_2}^{\{2\} }.
    \end{split}
\end{equation}

Observe that $\norm{K*L}{(0, k_2)}^{\{2\}} = \brac{K*L}{(0, k_2)}^{\{2\}}$. As such, by Lemma \ref{lemma pk 2nu k1}, we can bound the second summand above by
\begin{align*}
    \brac{K*L}{k_2}^{\{2\}} \lesssim \norm{\Op(K)}{\mB(L^2(\R^q))} \brac{L}{(0,k_2)}^{\{2\} } + \brac{K}{(0,k_2)}^{\{2\} } \norm{\Op(L)}{\mB(L^2(\R^q))}. 
\end{align*}

To bound the remaining term on the right-hand side of the inequality in \eqref{eq fk tame kl step 1}, it suffices to prove the estimate in the following lemma\footnote{Observe that $\norm{K}{(k_1, 0)}^{\{1\}} \neq \brac{K}{(k_1, 0)}^{\{1\}} $.}. 

\begin{lemma}\label{lemma fk brac 1}
For $K, L\in \mF^{(k_1, k_2)}$, we have
    \begin{align*}
        \brac{K*L}{(k_1, k_2)}^{\{1\} } \lesssim \brac{K}{(k_1, k_2)}^{\{1\} } \norm{\Op(L)}{\mB(L^2(\R^q))}+\norm{\Op(K)}{\mB(L^2(\R^q))} \brac{L}{(k_1, k_2)}^{\{1\} }.
    \end{align*}
\end{lemma}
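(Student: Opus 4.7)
The plan is to mirror the proof of Lemma \ref{lemma pk 2nu k1} (the single-parameter tame algebra estimate for product kernels), with the only changes being that (i) the derivative is now $X^{(\ap_1, \ap_2)}$ involving multi-indices in both factors, and (ii) the weight $|z_1|_1^{Q_1 + \deg \ap_1}$ is replaced by the flag-kernel weight $|w_1 z_1^{-1}|_1^{Q_1 + Q_2 + \deg \ap_1 + \deg \ap_2}$. By the translation-invariance reduction of Remark \ref{remark w=0}, I may assume $\supp \phi_1 \subseteq B^{q_1}(0, 2^{j_1})$ and $\supp \g_1 \subseteq B^{q_1}(z_1, 2^{l_1})$, with $f, g \in L^2(\R^q)$ of unit norm.

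First, I would pick an auxiliary cutoff $\vp_1 \in C^\infty_c(\R^{q_1})$ with $\phi_1 \prec \vp_1$ and decompose the pairing as $\la \phi_1 f, X^{(\ap_1,\ap_2)} K * \vp_1 L * \g_1 g\ra + \la \phi_1 f, X^{(\ap_1,\ap_2)} K * (1-\vp_1) L * \g_1 g\ra$. In the first term $L$ is localized away from $t_1=0$. As in \eqref{eq X associativity of convo}, since left-invariant vector fields commute with right-invariant convolution, I can slide $X^{(\ap_1,\ap_2)}$ past $K$; after the Leibniz rule transfers derivatives onto $\wt\vp_1$ (harmless since $\|\wt\vp_1\|_{L^\infty} \lesssim 1$), I take the adjoint of $\Op(K)$ and apply Cauchy-Schwarz. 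Finally I split $\vp_1$ into a finite sum of bumps $\phi_1^{i_1}$ centered at $w_1^{i_1}$ with $|w_1^{i_1}|_1 \leq b_1|z_1|_1$ (so that $|w_1^{i_1} z_1^{-1}|_1 \sim |z_1|_1$, by Remark \ref{remark b1}); this recovers $\norm{\Op(K)}{\mB(L^2)} \brac{L}{(k_1,k_2)}^{\{1\}}$.

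For the second term, I would decompose $1 - \vp_1 = \sum_{m_1 \geq j_1} \sum_{i \in \fI} \g_1^{m_1,i}$ into dyadic shells with $\supp \g_1^{m_1,i} \subseteq B^{q_1}(z_1^{m_1,i}, 2^{m_1})$ and $|z_1^{m_1,i}|_1 \sim 2^{m_1}$, then split into the two regimes $2^{j_1} \leq 2^{m_1} \leq b_1|z_1|_1$ and $b_1|z_1|_1 \leq 2^{m_1}$. In the first regime $L$ is still localized away from $t_1=0$ while $|z_1^{m_1,i} z_1^{-1}|_1 \sim |z_1|_1$: I commute $X^{(\ap_1,\ap_2)}$ past $K$, take the adjoint of $\Op(K)$, apply Cauchy-Schwarz, insert the scaling factor $|z_1^{m_1,i} z_1^{-1}|_1^{Q_1+Q_2+\deg\ap_1+\deg\ap_2}$ and its inverse, sum the resulting geometric series in $m_1$ (bounded by its largest term), and recover $\norm{\Op(K)}{\mB(L^2)} \brac{L}{(k_1,k_2)}^{\{1\}}$. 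In the second regime $K$ is instead localized away from $t_1=0$: I take adjoints of both $X^{(\ap_1,\ap_2)}$ and $\Op(K)$ first, apply Cauchy-Schwarz to separate $K$ and $L$, use $\norm{\Op(L)}{\mB(L^2)}$ on the $L$ factor, insert a scaling factor $2^{m_1(Q_1+Q_2+\deg\ap_1+\deg\ap_2)}$, apply Hölder in $m_1$, and recover $\brac{K}{(k_1,k_2)}^{\{1\}} \norm{\Op(L)}{\mB(L^2)}$ via duality using $|z_1^{m_1,i}|_1 \sim 2^{m_1}$.

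The main obstacle I expect is simply the bookkeeping of the higher power $Q_1 + Q_2 + \deg\ap_1 + \deg\ap_2$ in the weight: I must check that the dyadic factors introduced to recover $\brac{K}{(k_1,k_2)}^{\{1\}}$ and $\brac{L}{(k_1,k_2)}^{\{1\}}$ match exactly, and that the geometric series in $m_1$ remain summable with this larger exponent. Since the weight is a single power of $|w_1 z_1^{-1}|_1$ (just a larger one), the geometric-sum estimate used in Lemma \ref{lemma pk 2nu k1} goes through unchanged, and the two sub-cases above produce exactly the two summands on the right-hand side of the claimed inequality. All other steps, namely the associativity identity $X^{\beta}(f * g) = f * (X^{\beta} g)$, the adjoint formula $\wt K(t) = \overline{K}(t^{-1})$, and the $L^2$-boundedness of $\Op(K)$ and $\Op(L)$ (here used only through the operator-norm bounds already appearing on the right-hand side), are identical to the product-kernel case.
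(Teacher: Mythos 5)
Your proposal follows essentially the same route as the paper: the paper's proof of this lemma is the two-sentence observation that a flag kernel's only singularity is at $t_1 = 0$ (so a single cutoff $\vp_1$ in the first factor already makes the localized kernel smooth on all of $\R^{q_1}\times\R^{q_2}$, even when $X^{\ap_2}$ derivatives are present), followed by the instruction that ``the proof is otherwise nearly identical to that of Lemma~\ref{lemma pk 2nu k1}.'' Your step-by-step translation — replacing $X^{\ap_1}$ by $X^{(\ap_1,\ap_2)}$, replacing the weight $|w_1 z_1^{-1}|_1^{Q_1+\deg\ap_1}$ by $|w_1 z_1^{-1}|_1^{Q_1+Q_2+\deg\ap_1+\deg\ap_2}$, and running the same $\vp_1$/$(1-\vp_1)$ split, dyadic-shell decomposition, and two-regime analysis — is exactly the content of that instruction, so your proof matches the paper's.
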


\begin{proof}[Proof of Lemma \ref{lemma fk brac 1}]
Flag kernels have a singularity at $t_1 = 0$ (instead of the ``cross'' as with product kernels). As such, it suffices to localize the distributions $K$ and $L$ in the first factor space $\R^{q_1}$ to obtain a smooth function on the whole space $\R^{q_1} \times \R^{q_2}$. The proof is otherwise nearly identical to that of Lemma \ref{lemma pk 2nu k1} so we leave it to the reader. 
\end{proof}

\vspace{.2in}

\section{New proof of an inversion theorem for product kernels and flag kernels}

We present a new Banach-algebraic proof of the following inversion theorem. 

\begin{theorem}\label{theorem inverse}[Theorem 1.1 in \cite{Stoko23}]
    Let $\Op(K)$ be a right-invariant singular integral operator $\Op(K)f = K*f$, where $K$ is a product kernel (respectively a flag kernel) on a direct product of graded Lie groups $G= G_1 \times \cdots \times G_{\nu}$. If $\Op(K)$ is invertible in $L^2(G)$, then its inverse is of the form $\Op(L)(g) = L*g$, where $L$ is also a product kernel (respectively a flag kernel). 
\end{theorem}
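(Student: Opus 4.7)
The plan is to adapt the Banach-algebraic approach of Christ, Geller, G\l{}owacki, and Polin from \cite{CGGP92} to the multi-parameter setting, taking the tame algebra estimates (Theorem \ref{thm tame estimate pk} and Theorem \ref{thm tame FK}) as the central input. The goal is to show that the subalgebra of convolutions with product (resp.\ flag) kernels is spectrally invariant inside $\mB(L^2(G))$, which immediately yields the theorem.

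First I would reduce to the positive self-adjoint case. Since $\Op(K)^{-1} = \Op(K)^* \lp \Op(K)\Op(K)^* \rp^{-1}$, and since the involution $K \mapsto \widetilde{K}$ with $\widetilde{K}(t) = \overline{K(t^{-1})}$ preserves $\mP^{\vk}$ (resp.\ $\mF^{\vk}$) with equivalent seminorms, and since $K \ast \widetilde{K}$ lies in every $\mP^{\vk}$ (resp.\ $\mF^{\vk}$) by Theorem \ref{thm tame estimate pk} (resp.\ Theorem \ref{thm general nu fk tame}), it is enough to invert a positive self-adjoint operator $T = \Op(K)\Op(K)^*$ whose convolution kernel lies in $\mP^{\vk}$ (resp.\ $\mF^{\vk}$) for every $\vk$. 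Because $T$ is invertible on $L^2(G)$, its spectrum sits in some interval $[\epsilon,M] \subseteq (0,\infty)$.

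Next, by Weierstrass approximation choose polynomials $p_n$ with $p_n(x) \to 1/x$ uniformly on $[\epsilon,M]$; the spectral theorem then gives $p_n(T) \to T^{-1}$ in the operator norm. Each $p_n(T)$ is convolution with a distribution $L_n$ that is a polynomial in the convolution kernel of $T$, and hence belongs to $\mP^{\vk}$ (resp.\ $\mF^{\vk}$) for every $\vk$. The crux is to show that $\{L_n\}$ is Cauchy in every seminorm $\norm{\cdot}{\vk}$ (resp.\ $\brac{\cdot}{\vk}$): once this is established, the Fréchet completeness proven in Proposition \ref{prop pk equiv topologies} and Proposition \ref{prop equiv top fk} gives a limit $L$ that is a product (resp.\ flag) kernel, and the continuity of $\Op$ from $\mP^{\vk}$ (resp.\ $\mF^{\vk}$) into $\mB(L^2(G))$ identifies $\Op(L) = T^{-1}$.

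To prove the Cauchy property, I would iterate the tame algebra estimate on convolution powers of the kernel of $T$. The tameness asserts that the highest order seminorm appears paired with an $L^2$-operator-norm factor, which lets one show by induction on the polynomial degree that $\norm{q(T)}{\vk}$ grows only \emph{polynomially} in $\deg q$, with the leading coefficient proportional to $\norm{\Op(q(T))}{\mB(L^2(G))}$ modulo lower order seminorms. Applied to $q = p_n - p_m$, whose $L^2$ operator norm tends to $0$, this gives $\norm{L_n - L_m}{\vk} \to 0$.

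The main obstacle is the bookkeeping of the mixed terms $\norm{K}{\vk_0^S}^S \norm{L}{\vk_0^{S^c}}^{S^c}$ appearing in the tame estimate, which couple complementary partial seminorms indexed by subsets $S \subseteq \{1,\ldots,\nu\}$. To close the induction one cannot control $\norm{\cdot}{\vk}$ in isolation; rather, one must carry along the entire family of partial seminorms simultaneously and induct on the lattice of multi-indices, starting from $\vk = \vec{0}$ where the seminorm collapses to the $L^2$ operator norm. Ensuring that this multi-parameter induction produces polynomial-in-degree bounds with constants uniform in $n,m$ is the delicate part, and is precisely what the special structure of the tame estimate — in particular the presence of the product kernel seminorms inside the flag kernel estimate — was designed to accommodate.
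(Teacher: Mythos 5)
Your overall Banach-algebraic strategy is the same as the paper's (reduce to a positive self-adjoint $T$, feed the tame estimate into a spectral-calculus argument, and use Fréchet completeness to recover the kernel of the inverse), and your reduction to $T=\Op(K)\Op(K)^*$ is fine. However, your choice of a \emph{general} Weierstrass polynomial approximation $p_n \to 1/x$ introduces a gap that your sketch does not close. The Cauchy property you need is a bound of the shape $\norm{q(T)}{\vk} \lesssim \mathrm{poly}(\deg q)\cdot \norm{\Op(q(T))}{\mB(L^2)} + \cdots$ for arbitrary polynomials $q = p_n - p_m$. This is a Brandenburg-type estimate, and the paper explicitly flags (in a footnote in the proof of Theorem~\ref{thm inv pk}) that Brandenburg's trick does \emph{not} directly apply here, because the mixed terms $\norm{K}{\vk_0^S}^S\norm{L}{\vk_0^{S^c}}^{S^c}$ in the tame estimate are not paired with an $L^2$-operator-norm factor. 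Concretely, if you try your proposed induction $q = x\cdot q_1 + c$, the tame estimate produces factors like $\norm{\Op(q_1(T))}{\mB(L^2)}$ and partial seminorms of $q_1(T)$, and there is no relation between $\norm{\Op(q_1(T))}{\mB(L^2)}$ and $\norm{\Op(q(T))}{\mB(L^2)}$ — spectral cancellation in $q$ is invisible to $q_1$. Expanding $q$ into monomials and using the triangle inequality loses the cancellation entirely, since $\sum_j |c_j|\norm{T}{\mB(L^2)}^j$ can dwarf $\norm{q(T)}{\mB(L^2)}$. You acknowledge this is ``the delicate part'' but do not supply the mechanism that closes it.

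The paper avoids the issue by making a specific choice of polynomial approximation: the partial sums of the Neumann series $\sum_n S^n$ with $S = I - \ep\Op(K)^*\Op(K)$ and $\norm{S}{\mB(L^2)} < 1$. Because one only ever deals with the monomials $S^n$, whose operator norms $\norm{S}{\mB(L^2)}^n$ decay geometrically, iterating the tame estimate $n-1$ times yields $\norm{S^n}{(k_1,k_2)} \leq Cn\norm{S}{\mB(L^2)}^{n-1}\norm{S}{(k_1,k_2)} + Cn^2\norm{S}{\mB(L^2)}^{n-2}\norm{S}{(k_1,0)}^{\{1\}}\norm{S}{(0,k_2)}^{\{2\}}$, and hence $\lim_n \norm{S^n}{(k_1,k_2)}^{1/n} \leq \norm{S}{\mB(L^2)} < 1$, which gives absolute convergence of the Neumann series in every Fréchet seminorm. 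One further small inaccuracy in your write-up: at $\vk = \vec{0}$ the seminorm $\norm{K}{\vec{0}}$ does \emph{not} collapse to $\norm{\Op(K)}{\mB(L^2)}$ — the terms in Definition~\ref{def mP seminorms} indexed by nonempty $S$ still encode decay away from the singularity (with $\ap^S = 0$), so the base of your proposed lattice induction is itself nontrivial.
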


\begin{remark}
There are several well-known sub-algebras of the space of $L^2$-bounded operators which are known to be inverse-closed. Beals notably proved in \cite{beals1977characterization} that a certain class of $0$-order pseudodifferential operators is inverse-closed by first characterizing such operators in terms of the $L^2$-boundedness of certain commutators. Cordes defined a topology on another algebra of $0$-order pseudodifferential operators in terms of various $L^2$-operator norms (see \cite{cordes1985_charac}). Payne made use of the latter topology defined by Cordes to prove that the subalgebra of inverse-closed operators satisfies \textit{smooth tame structures} (see also the ``Beals-Cordes-type'' characterization of pseudodifferential operators in \cite{taylor1997beals_cordes}, and the work on ``noncommutative Wiener algebras'' in \cite{Sjo94}, \cite{Balan08_noncomm_wie}, \cite{Gro06_time_freq}, and the references therein). 
\end{remark}

\begin{remark}
    Theorem \ref{theorem inverse} in \cite{Stoko23} extends an inversion theorem for single-parameter, homogeneous kernels by Christ and Geller in \cite{CG84} to the multi-parameter setting of ``almost homogeneous'' product kernels and flag kernels. 
\end{remark}

\begin{remark}
    In the original proof of Theorem \ref{theorem inverse}, the author constructed a multi-parameter \textit{a priori estimate} and applied various other tools from PDEs. Our new proof relies on Banach-algebraic methods and extends the single-parameter estimates by Christ, Geller, G{\l}owacki, and Polin in \cite{CGGP92} to the multi-parameter setting. 
\end{remark}

We again consider the $2$-parameter case as the general $\nu$-parameter case follows directly after a few straightforward modifications. 

\subsection{Inversion theorem for $\mP^{(k_1, k_2)}$ and $\mF^{(k_1, k_2)}$} \ 

Theorem \ref{theorem inverse} follows from the following stronger result. 

\begin{theorem}\label{thm inv pk}
    Let $\Op(K) f = K*f$, where $K \in \mP^{(k_1, k_2)}$ (respectively $\mF^{(k_1, k_2)}$). If $\Op(K)$ is $L^2$ invertible, then $\Op(K)^{-1} = \Op(L)$, where $L \in \mP^{(k_1, k_2)}$ (respectively $\mF^{(k_1, k_2)}$). 
\end{theorem}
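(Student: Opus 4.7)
The plan is a Banach-algebraic argument driven by the tame algebra estimate of Theorem \ref{thm tame estimate pk} (resp.\ Theorem \ref{thm tame FK}), in the spirit of Christ--Geller--Głowacki--Polin. Since $\Op(K)$ is right-invariant and $L^2$-invertible, its inverse is also right-invariant, so the Schwartz kernel theorem gives $\Op(K)^{-1} = \Op(L)$ for some tempered distribution $L$; the task is to show $L \in \mP^{(k_1, k_2)}$ (respectively $\mF^{(k_1, k_2)}$). Writing $\Op(K)^* = \Op(\tilde K)$ with $\tilde K(x) := \overline{K(x^{-1})}$, the involution preserves the kernel class, so by the tame algebra estimate $M := \tilde K * K \in \mP^{(k_1, k_2)}$ and $T := \Op(M) = \Op(K)^*\Op(K)$ is positive, self-adjoint, and $L^2$-invertible. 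It suffices to show $T^{-1} = \Op(N)$ with $N \in \mP^{(k_1, k_2)}$, since then $\Op(K)^{-1} = \Op(N)\Op(K)^* = \Op(N * \tilde K)$ lies in $\Op(\mP^{(k_1, k_2)})$ by a final application of the tame estimate.

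Positivity and invertibility give $\sigma(T) \subset [c, \|T\|_{\mB(L^2)}]$ with $c > 0$. I would then \emph{precondition}: for any prescribed $\varepsilon > 0$, Weierstrass approximation (or equivalently a few steps of Newton iteration $X_{j+1} = X_j(2I - T X_j)$) yields a polynomial $p$ such that $\sup_{t \in [c, \|T\|]} |1 - p(t)t| \leq \varepsilon$. Since $p(T)$ is a polynomial in $T$, iterated tame algebra estimates show $p(T) = \Op(P)$ with $P \in \mP^{(k_1, k_2)}$; setting $S := I - p(T)T = \Op(\delta_0 - P * M) \in \Op(\mP^{(k_1, k_2)})$, the spectral theorem yields $\|S\|_{\mB(L^2)} \leq \varepsilon$. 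The identity $T^{-1} = p(T) \cdot \sum_{n \geq 0} S^n$ then reduces the problem to showing that the Neumann series converges in the seminorm $\norm{\cdot}{\vk}$.

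The heart of the proof is the seminorm convergence, controlled by iterating Theorem \ref{thm tame estimate pk} applied with $K = S$ and $L = S^{n-1}$, which gives, schematically,
\[
    \norm{S^n}{(k_1, k_2)} \leq C\bigl(\varepsilon\,\norm{S^{n-1}}{(k_1, k_2)} + \varepsilon^{n-1} \norm{S}{(k_1, k_2)} + \norm{S}{(k_1, 0)}^{\{1\}} \norm{S^{n-1}}{(0, k_2)}^{\{2\}} + \norm{S}{(0, k_2)}^{\{2\}} \norm{S^{n-1}}{(k_1, 0)}^{\{1\}}\bigr),
\]
with $C = C(\vk)$ the tame constant. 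The partial seminorms appearing on the right are controlled first via the single-parameter estimate of Lemma \ref{lemma pk 2nu k1}, whose iteration yields $\norm{S^n}{(k_j, 0)}^{\{j\}} \lesssim (C'\varepsilon)^n$ whenever $C'\varepsilon < 1$; feeding this back into the display above gives $\norm{S^n}{(k_1, k_2)} \lesssim n (C\varepsilon)^n$. Choosing $\varepsilon$ in the preconditioning step so that $\max(C, C')\varepsilon < 1$, the series $\sum_n \norm{S^n}{\vk}$ converges, producing $N' \in \mP^{(k_1, k_2)}$ with $\Op(N') = (p(T)T)^{-1}$; then $N := P * N' \in \mP^{(k_1, k_2)}$ by a final application of the tame algebra estimate, and $\Op(N) = T^{-1}$ follows from $L^2$-operator norm convergence.

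The main difficulty is the interplay between the tame constant $C(\vk)$ and the preconditioner: the polynomial $p$ must be chosen with knowledge of $C(\vk)$, so a priori the distribution $N$ could depend on $\vk$. This is resolved by observing that $\Op: \mP^{\vk} \to \mB(L^2)$ is injective and $T^{-1}$ is uniquely determined as an $L^2$ operator, so the distributional limit $N$ is intrinsic and automatically belongs to $\bigcap_{\vk} \mP^{\vk}$. The flag kernel case is handled by the same scheme, using $\brac{\cdot}{\vk}$ in place of $\norm{\cdot}{\vk}$, Theorem \ref{thm tame FK} in place of Theorem \ref{thm tame estimate pk}, and the inclusion $\mF^{(k_1, k_2)} \subset \mP^{(k_1, k_2)}$ of Remark \ref{remark fks subalg of pks} to handle the product-kernel seminorms that appear on the right-hand side of the flag tame estimate.
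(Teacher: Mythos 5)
Your proposal is correct, and the overall framework (invert the positive self-adjoint operator $\Op(K)^*\Op(K)$, expand in a Neumann series, iterate the tame estimates) matches the paper's; but at the key step you take a genuinely different route. The paper simply scales, choosing $\epsilon>0$ so that $S := I - \epsilon\Op(K)^*\Op(K)$ has $\|S\|_{\mB(L^2)} < 1$, then applies the two-parameter tame estimate (Theorem~\ref{thm tame estimate pk}) and the one-parameter version (Lemma~\ref{lemma pk 2nu k1}) a total of $n-1$ times to $S^n$, arguing that the factors in front grow only polynomially in $n$, so $\lim_n \norm{S^n}{(k_1,k_2)}^{1/n} \leq \|S\|_{\mB(L^2)} < 1$. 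The paper notes in a footnote that the usual Brandenburg's trick is not directly available because $\|\cdot\|_{\mB(L^2)}$ does not appear in the cross terms $\norm{K}{(k_1,0)}^{\{1\}}\norm{L}{(0,k_2)}^{\{2\}}$. You instead recover a Brandenburg-type argument through polynomial preconditioning (Weierstrass/Newton), forcing $\|S\|_{\mB(L^2)}\leq\varepsilon$ for arbitrarily small $\varepsilon$ \emph{before} iterating, and then needing only $\max(C,C')\varepsilon<1$. This has a concrete advantage: each application of the tame estimate contributes a factor of the tame constant $C(\vk)$, so the naive unrolled bound is $O\bigl((C\|S\|_{\mB(L^2)})^n\bigr)$ rather than $O\bigl(n\,C\,\|S\|_{\mB(L^2)}^n\bigr)$, and your preconditioning handles this compounding head-on while the paper's bookkeeping absorbs it into the $\lesssim$ less transparently. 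Your observation about the potential $\vk$-dependence of $N$ (and its resolution via uniqueness of the $L^2$ inverse and injectivity of $\Op$) is a genuine concern in your route that does not arise in the paper's version, since the scaling parameter $\epsilon$ there is fixed independently of $\vk$; you resolve it correctly. The flag-kernel adaptation and the final factorization $\Op(K)^{-1}=\Op(N)\Op(K)^*=\Op(N*\tilde K)$ agree with the paper.
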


\begin{proof}[Proof of Theorem \ref{thm inv pk}]
The proofs of the inversion theorem for operators with kernels  in $\mP^{(k_1, k_2)}$ and $\mF^{(k_1, k_2)}$ are nearly identical so we only present the proof for kernels in $\mP^{(k_1, k_2)}$.

Let $\Op(K)$ be invertible in $L^2(\R^{q_1} \times \R^{q_2})$, where $K \in \mP^{(k_1, k_2)}$. We want to prove that $\Op(K)^{-1} = \Op(L)$, where $L \in \mP^{(k_1, k_2)}$. 

Following the proof of the single-parameter inverse theorem in \cite{CGGP92}, it suffices to invert the self-adjoint operator $\ep \Op(K)^* \Op(K),$ for some $\ep > 0$. We can choose $\ep>0$ so that, by the spectral theorem, 
\begin{align*}
    \norm{I-\ep \Op(K)^* \Op(K)}{\mB(L^2(\R^q))} <1.
\end{align*}
Setting $S:= I - \ep \Op(K)^* \Op(K)$, we consider the Neumann series
\begin{equation}\label{eq pk neumann series}
    (\ep \Op(K)^* \Op(K))^{-1} = \sum_{n =0}^{\infty} S^n.
\end{equation}
By construction, 
\begin{align*}
    \norm{(\ep \Op(K)^* \Op(K))^{-1}}{\mB(L^2(\R^q)} \lesssim 1. 
\end{align*}
It thus remains to show that for every $(k_1, k_2) \in \N^2$,
\begin{align*}
    \norm{(\ep \Op(K)^* \Op(K))^{-1}}{(k_1, k_2)} \lesssim 1. 
\end{align*}
To do so, we first need to show that\footnote{We cannot apply \textit{Brandenburg's trick} as in \cite{CGGP92} because the $L^2(\R^q)$-operator norm $\norm{\Op(K)}{\mB(L^2(\R^{q_1} \times \R^{q_2}))}$ does not yet appear in every summand on the right-hand side of the tame estimate (see \cite{Brandenburg75}). }
\begin{align*}
    \lim_{n \rightarrow \infty} \norm{S^n}{(k_1, k_2)}^{1/n} <1. 
\end{align*}
By a first application of the tame estimate in Theorem \ref{thm tame estimate pk}, we obtain
\begin{align*}
    \norm{S^n}{(k_1, k_2)} \lesssim &\norm{S}{\mB(L^2(\R^q))}^{n-1} \norm{S}{(k_1, k_2)}+ \norm{S^{n-1}}{(k_1, 0)}^{ \{1\} }  \norm{S}{(0, k_2)}^{ \{2\} } \\
    &+\norm{S^{n-1}}{(0, k_2)}^{ \{2\} } \norm{S}{(k_1, 0)}^{ \{1\} } + \norm{S^{n-1}}{(k_1, k_2)} \norm{S}{\mB(L^2(\R^q))}. 
\end{align*}
After a second application of Theorem \ref{thm tame estimate pk} to $\norm{S^{n-1}}{(k_1, k_2)}$ and an application of the single-parameter tame estimate in Lemma \ref{lemma pk 2nu k1} to $\norm{S^{n-1}}{(k_1, 0)}^{ \{1\} }$ and $\norm{S^{n-1}}{(0, k_2)}^{ \{2\} }$, we obtain
\begin{align*}
    \norm{S^n}{(k_1, k_2)} \lesssim & 2 \norm{S}{\mB(L^2(\R^q))}^{n-1}   \norm{S}{(k_1, k_2)} + 2 \norm{S}{\mB(L^2(\R^q))}^{n-2} \norm{S}{(k_1, 0)}^{ \{1\} } \norm{S}{(0, k_2)}^{ \{2\} }  \\
    &+ 2 \norm{S^{n-2}}{(k_1, 0)}^{ \{1\} } \norm{S}{\mB(L^2(\R^q))}\norm{S}{(0, k_2)}^{ \{2\} }\\
    &+2 \norm{S^{n-2}}{(0, k_2)}^{ \{2\} } \norm{S}{\mB(L^2(\R^q))}\norm{S}{(k_1, 0)}^{ \{1\} }\\
    &+ \norm{S^{n-2}}{(k_1, k_2)} \norm{S}{\mB(L^2(\R^q))}^2.
\end{align*}
Notice that at every step, we localize the operator $S$ in fewer factor spaces which enables us to factor out additional powers of $\norm{S}{\mB(L^2(\R^q))}$ as needed. After $m$ steps,
\begin{align*}
    \norm{S^n}{(k_1, k_2)} \lesssim & m \norm{S}{\mB(L^2(\R^q))}^{n-1}  \norm{S}{(k_1, k_2)} \\
    &+ 2 \cdot (1+2+\cdots + (m-1)+m) \norm{S}{\mB(L^2(\R^q))}^{n-2} \norm{S}{(k_1, 0)}^{ \{1\} } \norm{S}{(0, k_2)}^{ \{2\} }\\
    &+ m \norm{S^{n-m}}{(k_1, 0)}^{ \{1\} } \norm{S}{\mB(L^2(\R^q))}^{m-1}\norm{S}{(0, k_2)}^{ \{2\} }\\
    &+m \norm{S^{n-m}}{(0, k_2)}^{ \{2\} } \norm{S}{\mB(L^2(\R^q))}^{m-1}\norm{S}{(k_1, 0)}^{ \{1\} }\\
    &+ \norm{S^{n-m}}{(k_1, k_2)} \norm{S}{\mB(L^2(\R^q))}^m .
\end{align*}
Applying Theorem \ref{thm tame estimate pk} and the single-parameter tame estimate in Lemma \ref{lemma pk 2nu k1} $n-1$ times, we obtain
\begin{align*}
    \norm{S^n}{(k_1, k_2)} \leq &  C n \norm{S}{\mB(L^2(\R^q))}^{n-1}   \norm{S}{(k_1, k_2)} + C n^2 \norm{S}{\mB(L^2(\R^q))}^{n-2} \norm{S}{(k_1, 0)} \norm{S}{(0, k_2)}.
\end{align*}
Taking $n$th roots and limits,
\begin{align*}
    \lim_{n \rightarrow \infty} \norm{S^n}{(k_1, k_2)}^{1/n} \leq &  \norm{S}{\mB(L^2(\R^q))} < 1.
\end{align*}
We can thus conclude that the Neumann series \eqref{eq pk neumann series} converges in the sense of $\mP^{\vk}$ norms for all $\vk$. In other words, $(\ep \Op(K)^* \Op(K))^{-1} \in \Op(\mP^{(k_1, k_2)})$. Since operators in $\Op(\mP^{(k_1, k_2)})$ form an algebra, we can conclude that $\Op(K)^{-1} = \ep (\ep \Op(K)^* \Op(K))^{-1} \Op(K)^* \in \Op(\mP^{\vk})$. Simply put, $\Op(K)^{-1} = \Op(L)$, for some $L \in \mP^{\vk}$. 
\end{proof}

\vspace{.5in}

\printbibliography[title={References}]


\vspace{.2in}

\Addresses

\end{document}